\newtheorem{definition}{Definition}[section]
\newtheorem{lemma}{Lemma}[section]
\newtheorem{theorem}{Theorem}[section]
\newtheorem{proposition}{Proposition}[section]
\theoremstyle{definition}
\newtheorem{example}{Example}
\numberwithin{equation}{section}
\DeclareMathOperator{\sign}{sign}
\DeclareMathOperator{\gr}{Gr}
\DeclareMathOperator{\diam}{diam}
\newcommand{\cadlag}{càdlàg\xspace}
\newcommand{\Bi}{\ensuremath{\mathcal B}\xspace}
\newcommand{\Ei}{\ensuremath{\mathcal E}\xspace}
\newcommand{\Fi}{\ensuremath{\mathcal F}\xspace}
\newcommand{\Hi}{\ensuremath{\mathcal H}\xspace}
\newcommand{\Si}{\ensuremath{\mathcal S}\xspace}
\newcommand{\dimU}{\ensuremath{\dim_{*}}\xspace}
\newcommand{\dimH}{\ensuremath{{\dim}_{\text{\normalfont\scriptsize H}}}\xspace}
\newcommand{\dimBl}{\ensuremath{\underline{\dim}_{\text{\normalfont\scriptsize B}}}\xspace}
\newcommand{\dimBu}{\ensuremath{\overline{\dim}_{\text{\normalfont\scriptsize B}}}\xspace}
\newcommand{\dimB}{\ensuremath{{\dim}_{\text{\normalfont\scriptsize B}}}\xspace}
\newcommand{\dimUt}[1]{\ensuremath{\dim_{*,#1}}\xspace}
\newcommand{\dimHt}[1]{\ensuremath{{\dim}_{\text{\normalfont\scriptsize H},#1}}\xspace}
\newcommand{\dimBlt}[1]{\ensuremath{\underline{\dim}_{\text{\normalfont\scriptsize B},#1}}\xspace}
\newcommand{\dimBut}[1]{\ensuremath{\overline{\dim}_{\text{\normalfont\scriptsize B},#1}}\xspace}
\newcommand{\dimBt}[1]{\ensuremath{{\dim}_{\text{\normalfont\scriptsize B},#1}}\xspace}
\newcommand{\pth}[1]{(#1)}
\newcommand{\pthb}[1]{\bigl(#1\bigr)}
\newcommand{\pthB}[1]{\Bigl(#1\Bigr)}
\newcommand{\pthbb}[1]{\biggl(#1\biggr)}
\newcommand{\bkt}[1]{[#1]}
\newcommand{\bktb}[1]{\bigl[#1\bigr]}
\newcommand{\bktB}[1]{\Bigl[#1\Bigr]}
\newcommand{\bktbb}[1]{\biggl[#1\biggr]}
\newcommand{\bktBB}[1]{\Biggl[#1\Biggr]}
\newcommand{\brc}[1]{\{#1\}}
\newcommand{\brcb}[1]{\bigl\{#1\bigr\}}
\newcommand{\brcB}[1]{\Bigl\{#1\Bigr\}}
\newcommand{\brcbb}[1]{\biggl\{#1\biggr\}}
\newcommand{\dt}{\ensuremath{\mathrm d}\xspace} 
\newcommand{\eqdef}{:=}
\newcommand{\ivoo}[1]{\ensuremath{(#1)}}
\newcommand{\ivoob}[1]{\ensuremath{\bigl(#1\bigr)}}
\newcommand{\ivof}[1]{\ensuremath{(#1]}}
\newcommand{\ivofb}[1]{\ensuremath{\bigl(#1\bigr]}}
\newcommand{\ivfo}[1]{\ensuremath{[#1)}}
\newcommand{\ivff}[1]{\ensuremath{[#1]}}
\newcommand{\abs}[1]{\lvert#1\rvert}
\newcommand{\absb}[1]{\bigl\lvert#1\bigr\rvert}
\newcommand{\absbb}[1]{\biggl\lvert#1\biggr\rvert}
\newcommand{\norm}[1]{\lVert#1\rVert}
\newcommand{\ceil}[1]{\lceil#1\rceil}
\newcommand{\esp}[2][]{\mathbb{E}#1\bkt{#2}}
\newcommand{\espb}[2][]{\mathbb{E}#1\bktb{\hspace{1pt}#2\hspace{1pt}}}
\newcommand{\espB}[2][]{\mathbb{E}#1\bktB{#2}}
\newcommand{\espBB}[2][]{\mathbb{E}#1\bktBB{#2}}
\newcommand{\varc}[3][]{\mathrm{Var}#1\pth{\hspace{1pt}#2\hspace{1.5pt}|\hspace{1.5pt}#3\hspace{1pt}}}
\let\C=\undefined
\newcommand{\e}{\ensuremath{\mathrm{e}}\xspace}
\newcommand{\C}{\ensuremath{\mathbf{C}}\xspace}
\newcommand{\R}{\ensuremath{\mathbf{R}}\xspace}
\newcommand{\Q}{\ensuremath{\mathbf{Q}}\xspace}
\newcommand{\N}{\ensuremath{\mathbf{N}}\xspace}
\newcommand{\Z}{\ensuremath{\mathbf{Z}}\xspace}
\newcommand{\indi}{\ensuremath{\mathbf{1}}\xspace}
\newcommand{\eps}{\varepsilon}
\newcommand{\vsp}{\vspace{.15cm}}
\newcommand{\hem}{\hspace{1em}}
\newcommand{\hex}{\hspace{1ex}}
\begin{document}

\begin{frontmatter}

\title{Some sample path properties of multifractional Brownian motion}
\runtitle{Some sample path properties of multifractional Brownian motion}

\author{\fnms{Paul} \snm{Balan\c{c}a} %
\ead[label=e1]{paul.balanca@gmail.com}%
}%

\address{\printead{e1}\\[1em]
\'Ecole Centrale Paris\\
Laboratoire MAS, ECP\\
Grande Voie des Vignes\\
92295 Ch\^atenay-Malabry, France
}

\affiliation{\'Ecole Centrale Paris}
\runauthor{Paul Balan\c{c}a}

\begin{abstract}
The geometry of the multifractional Brownian motion (mBm) is known to present a complex and surprising form when the Hurst function is greatly irregular. Nevertheless, most of the literature devoted to the subject considers sufficiently smooth cases which lead to sample paths locally similar to a fractional Brownian motion (fBm). The main goal of this paper is therefore to extend these results to a more general frame and consider any type of continuous Hurst function. More specifically, we mainly focus on obtaining a complete characterization of the pointwise Hölder regularity of the sample paths, and the Box and Hausdorff dimensions of the graph. These results, which are somehow unusual for a Gaussian process, are illustrated by several examples, presenting in this way different aspects of the geometry of the mBm with irregular Hurst functions.
\end{abstract}

\begin{keyword}[class=AMS]
  \kwd{60G07}
  \kwd{60G17}
  \kwd{60G22}
  \kwd{60G44}
\end{keyword}

\begin{keyword}
  \kwd{2-microlocal analysis}
  \kwd{Box dimension}
  \kwd{Hausdorff dimension}
  \kwd{Hölder regularity}
  \kwd{multifractional Brownian motion}
\end{keyword}

\end{frontmatter}


\section{Introduction}

The multifractional Brownian motion (mBm) has been independently introduced by \citet{Peltier.LevyVehel-1995} and \citet{Benassi.Jaffard.ea-1997} as a natural extension of the well-known fractional Brownian motion (fBm). The main idea behind these two works was to drop the stationary assumption on the process, and allow the Hurst exponent to change as time passes. In this way, the mBm is parametrized by a function $t\mapsto H(t)$, usually continuous, and happens to be an interesting stochastic model for non-stationary phenomena (e.g. signal processing, traffic on internet, terrain modelling,~\dots).

Since its introduction, several authors have investigated sample path properties of the mBm. For instance, \citet{Benassi.Jaffard.ea-1997} and \citet{Peltier.LevyVehel-1995} have respectively studied the law of iterated logarithm and the Hölder regularity of its trajectories. In the latter, the Box and Hausdorff dimensions of the graph have also been determined. Moreover, the fine covariance structure have been analysed and refined by \citet{Ayache.Cohen.ea-2000}, \citet{Herbin-2006} and \citet{Stoev.Taqqu-2006}. Finally, the local time and the Hausdorff dimension of the level sets have recently been considered by \citet{Boufoussi.Dozzi.ea-2007}. Several multiparameter extensions, which generalise the classic Lévy fractional Brownian motion and fractional Brownian sheet, have also been introduced and investigated by \citet{Herbin-2006} \citet{Meerschaert.Wu.ea-2008} and \citet{Ayache.Shieh.ea-2011}.\vsp

Fine geometric properties obtained in the aforementioned works usually rely on a key assumption $\Hi_0$ on the Hurst function:
\begin{equation}
   \text{$H$ is a $\beta$-Hölder continuous function such that } \sup_{t\in\R} H(t) < \beta.  \tag{$\mathcal{H}_0$}
\end{equation}
Owing to this hypothesis, the study of the $\Hi_0$-mBm is usually easier and leads to results closely related to their counterparts on fractional Brownian motion. This structure of the mBm is due to the form of its covariance which, under $\Hi_0$, is locally equivalent to fBm's one with Hurst exponent $H(t)$:
\[
  \forall u,v\in B(t,\rho);\quad \espb{(X_u - X_v)^2} \asymp \abs{u-v}^{2H(t)},
\]
for any $t\in\R\setminus\brc{0}$.

The study of the more general multifractional Brownian motion, i.e. when the assumption $\Hi_0$ does not hold, has only been recently considered by \citet{Herbin-2006} and \citet{Ayache-2013}. Both have investigated the Hölder regularity of the sample paths, and more precisely the pointwise and local Hölder exponents, which are defined by
\begin{align}  \label{eq:pointwise_holder}
  \forall t\in\R;\quad\alpha_{X,t}
  &= \sup\brcbb{\alpha : \limsup_{\rho\rightarrow 0} \sup_{u,v\in B(t,\rho)} \frac{\abs{X_u-X_v}}{\rho^\alpha} < \infty }
\end{align}
and
\begin{align}  \label{eq:local_holder}
  \forall t\in\R;\quad\widetilde\alpha_{X,t}
  = \sup\brcbb{\alpha : \limsup_{\rho\rightarrow 0} \sup_{u,v\in B(t,\rho)} \frac{\abs{X_u-X_v}}{\abs{u-v}^\alpha} < \infty }.
\end{align}
These two exponents aim to characterise the local Hölder behaviour of the trajectories at $t$, by respectively comparing its oscillations to a power of $\rho$ or $\abs{u-v}$. For instance, it is well-known that a fractional Brownian motion $B^H$ satisfies with probability one and for all $t\in\R$, $\alpha_{B^H,t} = \widetilde\alpha_{B^H,t} = H$.

The general form of the Hölder regularity of the multifractional Brownian motion has first been obtained by \citet{Herbin-2006}, proving that for all $t > 0$
\begin{equation}
  \alpha_{X,t} = H(t) \wedge \alpha_{H,t} \quad\text{ and }\quad \widetilde\alpha_{X,t} = H(t) \wedge \widetilde\alpha_{H,t} \quad\text{a.s.}
\end{equation}
The previous result only holds for a fixed $t>0$, and thus not uniformly on the sample paths. We also note that the geometric properties of a generic mBm are not a simple extension of those on fBm, but display a more complicated structure where the fine geometry of the Hurst function intervene.
\citet{Ayache-2013} has recently extended this study, obtaining a uniform characterisation of the local Hölder exponent and proving that its pointwise exponent can behave randomly as time passes. The resulting process, with unusual sample path properties, is sometimes called the \emph{irregular multifractional Brownian motion} in the literature.

The main goal of this work is therefore to push further the study of this irregular mBm. In Section~\ref{sec:mbm_representation}, we first discuss the existence of an alternative deterministic representation of the fractional Brownian field, which will then be used to study more precisely the geometric properties of the mBm. Section~\ref{sec:mbm_regularity} is devoted to the 2-microlocal and Hölder regularity of the trajectories (Theorem~\ref{th:2ml_mbm} and Proposition~\ref{prop:pointwise_mbm}), and therefore extends the results obtained by \citet{Herbin-2006} and \citet{Ayache-2013}. Hausdorff and Box dimensions of the graph, and images of fractal sets, are investigated in Section~\ref{sec:mbm_dimension} (Theorems~\ref{th:mbm_dim_box}, \ref{th:mbm_dim_haus} and \ref{th:mbm_dim_images}), where further connections with the geometry of the Hurst function are presented. Appendix~\ref{sec:mbm_appendix} gathers a few deterministic results related to the 2-microlocal frontier and the fractal dimensions which are used along this work. Finally, several examples are also given in these different sections to illustrate the main results and some particular aspects of the geometry of the irregular multifractional Brownian motion.

\section{Deterministic representation}  \label{sec:mbm_representation}

The question of the equivalence of the different mBm representations is known to be non-trivial. On one side, \citet{Peltier.LevyVehel-1995} have introduced the mBm as the following stochastic integral
\[
  \forall t\in\R;\quad X_t = \frac{1}{ \Gamma\pthb{ H(t)+\tfrac{1}{2} } }\int_\R \bktB{ (t-u)_+^{H(t)-1/2} - (-u)_+^{H(t)-1/2} } \dt W_u.
\]
Whereas on the other hand, \citet{Benassi.Jaffard.ea-1997} have considered a real-harmonizable definition:
\[
  \forall t\in\R;\quad \widetilde X_t = \int_\R \frac{ \e^{iut}-1 }{\abs{u}^{H(t)+1/2}} \,\dt\widehat W_u.
\]
\citet{Stoev.Taqqu-2006} have proved that the covariance structure is slightly different between these two definitions. Considering this fact, they have suggested to name multifractional Brownian motion any process $X$ which has the following form
\begin{equation*}
  \forall t\in\R;\quad  X_t = a^+ B^+(t,H(t)) + a^- B^-(t,H(t)),
\end{equation*}
where $(a^+,a^-)\in\R^2\setminus\brc{0,0}$ and the fractional Brownian fields $(t,H)\mapsto B^\pm(t,H)$ are defined by
\begin{equation}  \label{eq:rep_fGf_stoch}
  B^\pm(t,H) = \frac{1}{ \Gamma\pthb{H+\tfrac{1}{2}} }\int_\R \bktB{ (t-u)_\pm^{H-1/2} - (-u)_\pm^{H-1/2} } \dt W_u,
\end{equation}
for all $t\in\R$ and $H\in\ivoo{0,1}$.\vsp

In the next proposition, we present an alternative deterministic representation for these fractional Brownian fields which will be very useful for the study of sample path properties.
\begin{proposition} \label{prop:rep_fGf}
  Suppose $(B_t)_{t\in\R}$ is a Brownian motion such that $B_t \eqdef W(\ivff{0,t})$, where $W$ is a Wiener measure.
  Then, for all $t\in\R$ and any $H\in\ivoo{0,1}$, the fractional Brownian field $B^\pm(t,H)$ is almost surely equal to:
  \begin{itemize}
    \item if $(t,H)\in\R\times\ivoob{\tfrac{1}{2},1}$,
    \begin{align}  \label{eq:rep_fGf_up}
      B^\pm(t,H) = \frac{\pm 1}{ \Gamma\pthb{H - \tfrac{1}{2}} }\int_\R B_u \bktB{ (t-u)_\pm^{H-3/2} - (-u)_\pm^{H-3/2} } \dt u.
    \end{align}

    \item if $(t,H)\in\R\times\ivofb{0,\tfrac{1}{2}}$ and $T$ is a fixed number such that $T\in\ivoo{\pm\infty,t}$,
    \begin{align}  \label{eq:rep_fGf_down1}
      B^\pm(t,H)
      &= \frac{\pm\pthb{ H-\tfrac{1}{2} } }{ \Gamma\pthb{H+\tfrac{1}{2}} }\int_\R (B_u - B_t\indi_{\brc{\pm t\geq \pm T}}) (t-u)_\pm^{H-3/2} - B_u (-u)_\pm^{H-3/2}  \dt u \nonumber \\
      &\pm B_t \pth{t-T}_\pm^{H-1/2} ,
    \end{align}
    or equivalently,
    \begin{align}  \label{eq:rep_fGf_down2}
      B^\pm(t,H)
      &= \frac{\pm 1}{ \Gamma\pthb{H+\tfrac{1}{2}} } \frac{\dt}{\dt s}\pthbb{ \int_{\ivff{\pm T,s}} B_u (s-u)_\pm^{H-\frac{1}{2}} \dt u }\brcb{(t) - (0)} \nonumber \\
      &\pm \frac{{H-\tfrac{1}{2}}}{ \Gamma\pthb{H+\tfrac{1}{2}} } \int_{\ivof{\pm\infty,\pm T}} B_u \bktB{ (t-u)_\pm^{H-3/2} - (-u)_\pm^{H-3/2} } \dt u.
    \end{align}
  \end{itemize}
\end{proposition}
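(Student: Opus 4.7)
The natural approach is to apply an integration by parts to the stochastic integral defining $B^\pm(t,H)$, transferring the derivative from the white noise $W$ onto the deterministic kernel and producing a representation in terms of the Brownian primitive $B_u = W(\ivff{0,u})$. Writing $g_t^\pm(u) = (t-u)_\pm^{H-1/2} - (-u)_\pm^{H-1/2}$, one has $(g_t^\pm)'(u) = \mp(H-\tfrac{1}{2})\bktb{(t-u)_\pm^{H-3/2} - (-u)_\pm^{H-3/2}}$, and the target identity is
\[
  \int_\R g_t^\pm(u)\,\dt W_u = \text{(boundary)} - \int_\R B_u\,(g_t^\pm)'(u)\,\dt u.
\]
The rigorous justification splits naturally at $H=\tfrac{1}{2}$, depending on whether $(g_t^\pm)'$ is locally integrable near the singularity $u=t$.

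For $H > \tfrac{1}{2}$ the derivative $(g_t^\pm)'$ is locally integrable, while at infinity the cancellation between the two kernel terms produces an $\abs{u}^{H-5/2}$ decay that tames the Brownian growth $O(\sqrt{\abs{u}\log\log\abs{u}})$. A Wiener-type integration by parts for deterministic absolutely continuous integrands, obtained by approximating $g_t^\pm$ by step functions on a dyadic grid and passing to the $L^2$-limit, gives the representation with vanishing boundary contributions at $\pm\infty$; the identity $\Gamma(H+\tfrac{1}{2}) = (H-\tfrac{1}{2})\Gamma(H-\tfrac{1}{2})$ then converts it into \eqref{eq:rep_fGf_up}.

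For $H\in\ivofb{0,\tfrac{1}{2}}$, the derivative $(g_t^\pm)'$ has a non-integrable singularity at $u=t$, so the integration by parts must be regularized. I would split the line at the auxiliary point $\pm T$, lying on the side of $t$ opposite to $u=0$. On $\ivofa{\mp\infty,\pm T}$ the kernel is smooth in the interior, and a classical integration by parts yields the $-B_u(-u)_\pm^{H-3/2}$ contribution together with a boundary term $\pm g_t^\pm(\pm T)\,B_{\pm T}$. On the complementary half-line, the singularity at $u=t$ is tamed by the subtraction $B_u-B_t$: the local Hölder bound $\abs{B_u-B_t} = O(\abs{u-t}^{1/2-\eps})$ renders $(B_u-B_t)(t-u)_\pm^{H-3/2}$ locally integrable, and applying the product rule to $u\mapsto g_t^\pm(u)(B_u-B_t)$ on a truncation $\ivff{\pm T,\,t\mp\eta}$ followed by an $L^2$-passage as $\eta\to 0$ recovers the remaining integral together with a cancelling boundary piece $\mp g_t^\pm(\pm T)(B_{\pm T}-B_t)$. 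Summing the two halves, the $B_{\pm T}$ contributions combine and one obtains the single surviving boundary term $\pm B_t(t-T)_\pm^{H-1/2}$, giving \eqref{eq:rep_fGf_down1}. The equivalence with \eqref{eq:rep_fGf_down2} then follows by recognising the $s$-derivative displayed there as the integration-by-parts form of $\int g_s^\pm\,\dt W_u$ restricted to $\ivff{\pm T,\,\sbullet}$, and comparing the two expressions after evaluation between $s=0$ and $s=t$.

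The principal obstacle is the sub-critical regime $H\leq\tfrac{1}{2}$. The non-integrable singularity of $(t-u)^{H-3/2}$ forces both a careful regularization (the subtraction of $B_t$) and a controlled limiting procedure (truncation near $u=t$ followed by an $L^2$-passage) to be carried out while preserving almost sure equality with the stochastic integral; the boundary contributions $\pm g_t^\pm(\pm T)\,B_{\pm T}$ and $\mp g_t^\pm(\pm T)(B_{\pm T}-B_t)$ produced by the two halves must then be shown to combine cleanly into $\pm B_t(t-T)_\pm^{H-1/2}$, with the $(-T)_\pm^{H-1/2}$ piece of $g_t^\pm(\pm T)$ being absorbed back into the principal integral. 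A convenient sanity check is offered by the critical value $H=\tfrac{1}{2}$, at which all three representations \eqref{eq:rep_fGf_up}, \eqref{eq:rep_fGf_down1} and \eqref{eq:rep_fGf_down2} should collapse to a signed multiple of $B_t$.
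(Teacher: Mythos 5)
Your proposal is correct and follows essentially the same route as the paper: an integration by parts transferring the derivative onto the kernel, with vanishing boundary terms at infinity for $H>\tfrac{1}{2}$, and, for $H\leq\tfrac{1}{2}$, a split at the auxiliary point $T$ combined with the subtraction of $B_t$ and the Hölder continuity of the Brownian paths to tame the singularity at $u=t$. The only cosmetic difference is that the paper justifies the basic integration-by-parts identity via Itô's lemma applied to the product $B_u(t-u)_+^{H-1/2}$ on a finite interval (then letting the endpoint tend to $-\infty$), rather than by step-function approximation of the integrand.
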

\begin{proof}
  Without any loss of generality, we may only prove the equality for the term $B^+(t,H)$.
  Let first consider the case $t\in\R$ and $H\in\ivoo{\tfrac{1}{2},1}$. Using Ito's lemma on the product $B_u(t-u)_+^{H-1/2}$, we obtain for any $x \leq t$
  \begin{align*}
    B_t (t-t)^{H-1/2} = B_x (t-x)^{H-1/2} + \int_x^t (t-u)^{H-1/2} \dt B_u - (H-\tfrac{1}{2}) \int_x^t B_u (t-u)^{H-3/2} \dt u.
  \end{align*}
  Hence, subtracting the case $t=0$,
  \begin{align*}
    B_x \bktB{ (t-x)_+^{H-1/2} -(-x)_+^{H-1/2} } &= -\int_x^t \bktB{ (t-u)_+^{H-1/2} - (-u)_+^{H-1/2} } \dt B_u \\
    &+ (H-\tfrac{1}{2}) \int_x^t B_u \bktB{ (t-u)_+^{H-3/2} - (-u)_+^{H-3/2} } \dt u.
  \end{align*}
  The Brownian motion is known to satisfy $\lim_{u\rightarrow\pm\infty} \abs{B_u} / \abs{u}^{1/2+\eps} = 0$. Moreover, we observe that $\bktb{ (t-x)_+^{H-1/2} -(-x)_+^{H-1/2} } \sim_{-\infty} (H-\tfrac{1}{2}) t^{H-1/2} (-x)^{H-3/2}$. Therefore, as  $\tfrac{3}{2} - H > \tfrac{1}{2}$,
  \[
    B_x \bktb{ (t-x)_+^{H-1/2} -(-x)_+^{H-1/2} } \xrightarrow[x\rightarrow -\infty]{a.s.} 0.
  \]
  Similarly, using the dominated convergence theorem,
  \[
    \int_x^t B_u \bktB{ (t-u)_+^{H-3/2} - (-u)_+^{H-3/2} } \dt u \xrightarrow[x\rightarrow -\infty]{a.s.} \int_{-\infty}^t B_u \bktB{ (t-u)_+^{H-3/2} - (-u)_+^{H-3/2} } \dt u,
  \]
  Finally, owing to the $L^2$-continuity of the stochastic integral,
  \[
    \int_x^t \bktB{ (t-u)_+^{H-1/2} - (-u)_+^{H-1/2} } \dt B_u \xrightarrow[x\rightarrow -\infty]{L^2(\Omega)} \int_{-\infty}^t \bktB{ (t-u)_+^{H-1/2} - (-u)_+^{H-1/2} } \dt B_u,
  \]
  which proves the expected equality.

  Let now consider the second case $H\in\ivoo{ 0,\tfrac{1}{2} }$. The calculus is quite similar, with only a slight modification in the application of Ito's lemma between $T$ and $t-\eps$.
  \begin{align*}
    &( B_{t-\eps} - B_t ) \eps^{H-1/2} + B_t (t-T)^{H-1/2} \\
    &= B_T (t-T)^{H-1/2} + \int_T^{t-\eps} (t-u)^{H-1/2} \dt B_u - (H-\tfrac{1}{2}) \int_T^{t-\eps} ( B_u-B_t ) (t-u)^{H-3/2} \dt u,
  \end{align*}
  for any $\eps>0$. Owing to the Hölder continuity of the sample paths of $B$, the previous expression converges when $\eps\rightarrow 0$.
  In addition, the calculus presented in the case $H > \tfrac{1}{2}$ holds as well on the interval $\ivff{x,T}$, therefore proving Equation~\eqref{eq:rep_fGf_down1}.
  Finally, the second form \eqref{eq:rep_fGf_down2} of the fractional Brownian field is obtained using an alternative representation of the Riemann--Liouville fractional derivative of a function on the interval $\ivff{T,t}$ (see the book of \citet{Samko.Kilbas.ea-1993} for more information on the subject).
\end{proof}
This type of representation has already been exhibited and studied by \citet{Picard-2011} for the fractional Brownian motion, even though his proof is based on a different approach (approximation of the Brownian motion with Lipschitz functions). The case $H>\tfrac{1}{2}$ has also been considered by \citet{Takashima-1989}.

As presented in the next simple lemma and owing to the Hölder continuity of the sample paths of Brownian motion, we obtain a deterministic representation of the fractional Brownian field $(t,H)\mapsto B(t,H)$ which is continuous in both variables. It will be very useful in the two next sections to study in the sample path properties of the multifractional Brownian motion.
\begin{lemma} \label{lemma:cont_fGf}
  Let $(B_t)_{t\in\R}$ be a continuous Brownian motion and $B(t,H)$ be the fractional Brownian field defined in Proposition~\ref{prop:rep_fGf}.

  Then, with probability one for all $(t_0,H_0)\in\R\times\ivoo{0,1}$, the field $(t,H)\mapsto B(t,H)(\omega)$ is continuous at $(t_0,H_0)$.
\end{lemma}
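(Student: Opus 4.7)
The plan is to work on a single full-measure event $\Omega^{*}$ on which the Brownian sample path $u \mapsto B_u(\omega)$ is locally $\alpha$-Hölder continuous for every $\alpha < 1/2$ and satisfies $\abs{B_u} = \text{o}\bigl(\abs{u}^{1/2+\eps}\bigr)$ as $\abs{u} \to \infty$ for every $\eps > 0$. On this event, I would establish joint continuity of $(t, H) \mapsto B^{\pm}(t, H)(\omega)$ at an arbitrary $(t_0, H_0) \in \R \times \ivoo{0, 1}$ by dominated convergence applied to the representations of Proposition~\ref{prop:rep_fGf}, separately treating the three regimes $H_0 > 1/2$, $H_0 < 1/2$ and $H_0 = 1/2$. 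By symmetry, only $B^{+}$ needs to be considered.

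For $(t_0, H_0) \in \R \times \ivoo{1/2, 1}$, I would use \eqref{eq:rep_fGf_up} on a compact neighborhood $V$ of $(t_0, H_0)$ contained in $\R \times \ivoo{1/2, 1}$, with $H$-projection $\ivff{H_-, H_+} \subset \ivoo{1/2, 1}$. The prefactor $1/\Gamma(H - \tfrac{1}{2})$ is continuous and bounded on $V$, and the integrand $B_u \bigl[(t - u)_+^{H - 3/2} - (-u)_+^{H - 3/2}\bigr]$ admits an integrable dominator uniform in $(t, H) \in V$: near the singularities $u = 0$ and $u = t$, the bound is of order $\abs{u}^{H_- - 3/2}$, integrable since $H_- - 3/2 > -1$; far from the origin, a mean-value estimate bounds the bracket by $C\abs{u}^{H_+ - 5/2}$, which combined with the polynomial growth of $B$ on $\Omega^{*}$ yields an integrable tail. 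Pointwise continuity of the integrand then concludes this regime.

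For $(t_0, H_0) \in \R \times \ivoo{0, 1/2}$, I would argue analogously using \eqref{eq:rep_fGf_down1}, fixing $T$ once and for all below the $t$-projection of a small compact neighborhood $V$ of $(t_0, H_0)$. The new subtlety lies in the behaviour near $u = t$ of the summand $(B_u - B_t)(t - u)_+^{H - 3/2}$: the Hölder continuity of $B$ on $\Omega^{*}$ provides $\abs{B_u - B_t} \leq C\abs{u - t}^{\alpha}$ with $\alpha < 1/2$ chosen so that $H - 3/2 + \alpha > -1$ uniformly in $H$ over $V$ (possible since $H$ stays bounded away from $0$), producing an integrable dominator near $u = t$. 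The boundary term $B_t (t - T)_+^{H - 1/2}$ is trivially continuous, and the polynomial tail at $-\infty$ is controlled as in the previous regime.

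The main obstacle is continuity across the critical line $H = 1/2$, where the formula switches. I would handle it via the alternative expression \eqref{eq:rep_fGf_down2}: the Itô integration-by-parts identity underlying the proof of Proposition~\ref{prop:rep_fGf}, applied on the bounded interval $\ivff{T, t}$ for a fixed $T < t_0$, allows one to rewrite \eqref{eq:rep_fGf_up} (valid for $H > 1/2$) in the same form as \eqref{eq:rep_fGf_down2}, so that both representations \eqref{eq:rep_fGf_up} and \eqref{eq:rep_fGf_down1} coincide with \eqref{eq:rep_fGf_down2} on a full neighborhood of $1/2$ in $\ivoo{0, 1}$. On this unified form, continuity of the tail integral over $\ivof{-\infty, T}$ at $(t_0, 1/2)$ follows from dominated convergence with a bracket of order $\abs{u}^{H - 5/2}$, and continuity of the derivative term is a direct consequence of the Hölder regularity of $B$ on the compact interval $\ivff{T, t}$. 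At $H = 1/2$ itself, the $(H - 1/2)$ prefactor in \eqref{eq:rep_fGf_down1} annihilates the first integral and leaves the boundary term $B_t (t - T)^{0} = B_t$, matching the limit of \eqref{eq:rep_fGf_up} as $H \to 1/2^{+}$ and closing the argument.
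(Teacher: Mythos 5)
Your proposal is correct and follows essentially the same route as the paper's proof: dominated convergence applied to the deterministic representations of Proposition~\ref{prop:rep_fGf} on the two open regimes, with the critical line $H=\tfrac{1}{2}$ handled by the fact that the representation~\eqref{eq:rep_fGf_down1} remains valid across $H=\tfrac{1}{2}$, so that its $(H-\tfrac{1}{2})$ prefactor annihilates the integral term while the boundary term tends to $B_{t_0}$. The only cosmetic difference is your detour through~\eqref{eq:rep_fGf_down2} to unify the formulas near the critical value, where the paper simply observes directly that~\eqref{eq:rep_fGf_down1} holds for all $H\in\ivoo{0,1}$.
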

\begin{proof}
  As previously, we only need to consider the component $B^+(t,H)$.
  The continuity on the domains $\R\times\ivoo{0,\tfrac{1}{2}}$ and $\R\times\ivoo{\tfrac{1}{2},1}$ is a simple consequence of the dominated convergence theorem: the deterministic integral is split in two parts on which the theorem can be applied with different bounds.

  To threat the case $H_0 = \tfrac{1}{2}$, we observe that the representation~\eqref{eq:rep_fGf_down1} in fact holds for all $H\in\ivoo{0,1}$. Then, similarly to the previous case, the dominated convergence theorem implies that the first term converges to zero when $H\rightarrow\tfrac{1}{2}$ whereas the second one simply converges to $B_{t_0}$.
\end{proof}

\subsection{Well-balanced case}

As observed by \citet{Stoev.Taqqu-2006}, the introduction of the mBm is more delicate in the well-balanced case $a^+=a^-$. Indeed, when $H=\tfrac{1}{2}$, the aforementioned definition \eqref{eq:rep_fGf_stoch} of the fractional Brownian field $B(t,H)$ implies that $B(\cdot,\tfrac{1}{2}) = 0$ almost surely. Hence, as presented in \cite{Stoev.Taqqu-2006}, to obtain a well-defined mBm, one needs to change the normalisation term, and more specifically, add a diverging factor: $1 / (H-\tfrac{1}{2})$.

Considering this slightly modified definition of the well-balanced mBm, we observe that it corresponds to a particular deterministic expression.
\begin{proposition}  \label{prop:rep_fGf_balanced}
  Suppose $(B_t)_{t\in\R_+}$ is a continuous Brownian motion and $a^+=a^-=1$. Then, for all $(t,H)\in\R\times\ivoo{0,1}$,
  \begin{equation}  \label{eq:fGf_balanced}
    B(t,H) = \frac{1}{\Gamma\pthb{H+\tfrac{1}{2}}} \int_\R B_u \bktB{ (t-u)^{<H-3/2>} - (-u)^{<H-3/2>} } \dt u\quad\text{a.s.}
  \end{equation}
  where for any $x\in\R$, $x^{<\alpha>} \eqdef \sign(x)\,\abs{x}^\alpha$.
\end{proposition}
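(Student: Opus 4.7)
The plan is to combine the deterministic representations of $B^+(t,H)$ and $B^-(t,H)$ given in Proposition~\ref{prop:rep_fGf}, exploiting the algebraic identity $x^{<\alpha>} = (x)_+^\alpha - (x)_-^\alpha$. In the well-balanced setting $a^+ = a^- = 1$, with the renormalisation factor $1/(H-\tfrac12)$ compensating for the degeneracy at $H = \tfrac12$, one has $B(t,H) = \bigl[B^+(t,H) + B^-(t,H)\bigr]/(H-\tfrac12)$, so the statement reduces to computing this sum in each regime of $H$.

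For $H \in (\tfrac12, 1)$, I would substitute \eqref{eq:rep_fGf_up} on both sides and sum: the opposite overall signs combine with the $(\cdot)_\pm$ substitution to produce
\[
B^+(t,H) + B^-(t,H) = \frac{1}{\Gamma(H-\tfrac12)} \int_\R B_u \bktB{(t-u)^{<H-3/2>} - (-u)^{<H-3/2>}} \dt u,
\]
whose integrand is absolutely integrable (the local exponent $H-\tfrac32 > -1$ is harmless, and the bracketed difference decays as $O(|u|^{H-5/2})$ at infinity, dominating the $O(|u|^{1/2+\eps})$ growth of $B_u$). Dividing by $H - \tfrac12$ and applying $(H-\tfrac12)\Gamma(H-\tfrac12) = \Gamma(H+\tfrac12)$ yields \eqref{eq:fGf_balanced}.

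For $H \in (0, \tfrac12)$, I would instead invoke \eqref{eq:rep_fGf_down1} with parameters $T^+ < t$ and $T^- > t$, then pass to the limits $T^+ \to -\infty$, $T^- \to +\infty$. Since $H - \tfrac12 < 0$, the boundary terms $\pm B_t (t-T)_\pm^{H-1/2}$ vanish, and summing the residual integrals collapses the $(\pm)$ powers via the same identity into
\[
\frac{H-\tfrac12}{\Gamma(H+\tfrac12)} \int_\R \bktB{(B_u - B_t)(t-u)^{<H-3/2>} - B_u(-u)^{<H-3/2>}} \dt u.
\]
The combined integrand is absolutely integrable: Hölder continuity of $B$ tames the singularity at $u = t$, the identity $B_0 = 0$ tames the one at $u = 0$, and cancellation of leading terms yields $O(|u|^{H-3/2})$ decay at infinity. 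Dividing by $H - \tfrac12$ and interpreting the RHS of \eqref{eq:fGf_balanced} as a principal value at $u = t$ (so that the formal term $B_t\,\text{p.v.}\int_\R (t-u)^{<H-3/2>}\,\dt u = 0$ vanishes by antisymmetry) identifies the two expressions. The endpoint case $H = \tfrac12$ then follows from Lemma~\ref{lemma:cont_fGf} by passing to the limit from either side, combined with a dominated convergence argument on the regularised RHS.

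The main obstacle is the regime $H < \tfrac12$: the individual integrals $\int_\R B_u (t-u)^{<H-3/2>}\,\dt u$ and $\int_\R B_u (-u)^{<H-3/2>}\,\dt u$ fail to converge separately, so the antisymmetric singularities at $u = t$ and $u = 0$ are the sole source of cancellation. The identity \eqref{eq:fGf_balanced} must therefore be read as a single integral in which the singular contributions are regularised via the $(B_u - B_t)$ and $(B_u - B_0)$ subtractions. Justifying rigorously that this interpretation matches the combined integrand arising from \eqref{eq:rep_fGf_down1}, and commuting the limits $T^\pm \to \pm\infty$ with the integration via dominated convergence, is where the technical effort concentrates.
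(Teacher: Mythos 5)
Your argument is correct and follows essentially the same route as the paper's proof: both combine the two one-sided representations of Proposition~\ref{prop:rep_fGf} under the $1/(H-\tfrac12)$ renormalisation and then dispose of the $B_t$-correction term using the antisymmetry of $u\mapsto(t-u)^{<H-3/2>}$ about $u=t$ together with the H\"older continuity of $B$ at $t$, so that \eqref{eq:fGf_balanced} is read as an improper (principal-value) integral when $H\leq\tfrac12$. The only cosmetic difference is that the paper keeps the finite symmetric window $T_\pm=t\mp\delta$ and checks convergence of the symmetric regularisation $\lim_{\eps\rightarrow0}\int_{B(t,\delta)\setminus B(t,\eps)}(B_u-B_{2t-u})(t-u)^{H-3/2}\,\dt u$, whereas you send $T^\pm\rightarrow\mp\infty$ first and invoke the principal value at the end.
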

\begin{proof}
  Let $(t,H)\in\R\times\ivoo{0,1}$, $\delta > 0$ and $T_\pm = t\mp\delta$.
  Owing to the representation obtained in Proposition~\ref{prop:rep_fGf} and using the well-balanced re-normalisation,
  \[
    B(t,H) = \frac{1}{\Gamma\pthb{H+\tfrac{1}{2}}} \int_\R (B_u-B_t\indi_{\abs{t-u}\leq\delta}) \bktB{ (t-u)^{<H-3/2>} - (-u)^{<H-3/2>} } \dt u.
  \]
  Without any restriction, we may assume that $0\notin B(t,\delta)$. Then, let us observe the component of the integral corresponding to the interval $\ivff{t-\delta,t+\delta}$. It is equal to
  \[
    \int_{B(t,\delta)} (B_u-B_t) (t-u)^{<H-3/2>} \dt u
    =\lim_{\eps\rightarrow 0}\int_{B(t,\delta)\setminus B(t,\eps)} (B_u-B_t) (t-u)^{<H-3/2>} \dt u
  \]
  For any $\eps>0$, we note that $\int_{B(t,\delta)\setminus B(t,\eps)} B_t (t-u)^{<H-3/2>} \dt u = 0$. Furthermore, the other part converges as well to
  \begin{align*}
    \int_{B(t,\delta)\setminus B(t,\eps)} B_u (t-u)^{<H-3/2>} \dt u
    = &\int_{\ivff{t-\delta,t-\eps}} (B_u-B_{2t-u}) (t-u)^{H-3/2} \dt u \\
    \longrightarrow_{\eps\rightarrow0} &\int_{\ivff{t-\delta,t}} (B_u-B_{2t-u}) (t-u)^{H-3/2} \dt u,
  \end{align*}
  since $B$ is Hölder continuous at $t$. Hence, the integral converges when $\eps\rightarrow 0$ and we obtain the representation presented in Equation~\eqref{eq:fGf_balanced}.
\end{proof}
On the contrary to the formulas presented in Proposition~\ref{prop:rep_fGf}, we observe that the representation~\eqref{eq:fGf_balanced} is an improper deterministic integral when $H\leq\tfrac{1}{2}$.

Also note that we may check that when $H=\tfrac{1}{2}$, the corresponding Gaussian process is, up to a constant, a Brownian motion. More precisely, \citet{Stoev.Taqqu-2006} have shown that it has the following stochastic integral representation
\[
  B\pthb{ t,\tfrac{1}{2} } = \frac{1}{\Gamma\pthb{H+\tfrac{1}{2}}} \int_\R \bktB{ \log\,\abs{t-u}-\log\,\abs{u} } \dt W_u.
\]
As proved in the next proposition, the well-balanced case is in fact a classic mBm defined with respect to the latter Brownian motion.
\begin{proposition}  \label{prop:rep_fGf_balanced2}
  Suppose $B$ is a continuous Brownian motion and $B(t,H)$ denotes the well-balanced fractional Brownian field. Define the Brownian motion $\widetilde B$ by
  \[
    \forall t\in\R;\quad \widetilde B_t = \frac{1}{2\Gamma\pthb{H+\tfrac{1}{2}}} \int_\R \bktB{ \log\,\abs{t-u}-\log\,\abs{u} } \dt B_u.
  \]
  Then, for all $t\in\R$ and $H\in\ivoo{0,1}$,
  \[
    B(t,H) = \frac{1}{\pthb{H-\tfrac{1}{2}}\tan\pthb{ (H+\tfrac{1}{2})\tfrac{\pi}{2} }}\pthb{ \widetilde B^+(t,H) - \widetilde B^-(t,H) }\quad \text{a.s.}
  \]
  where $\widetilde B^\pm(t,H)$ designates the fractional Brownian field with respect to $\widetilde B$. Note that the normalising function is continuous at $H=\tfrac{1}{2}$.
\end{proposition}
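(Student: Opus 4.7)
Both sides of the claimed identity are centered Gaussian random variables built from the Wiener measure $W$ (with $B_t = W(\ivff{0,t})$), so my plan is to rewrite each side as a Wiener integral against $\dt B$ and then to match the two kernels in $L^2(\R)$; by linearity of the Wiener integral this will imply the claimed almost sure equality.

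For the left-hand side, reversing the Itô-type integration by parts of Proposition~\ref{prop:rep_fGf_balanced} (exactly as in Proposition~\ref{prop:rep_fGf}) recovers the classical stochastic-integral form of the renormalised well-balanced mBm,
\[
  B(t,H) = \frac{1}{(H-\tfrac{1}{2})\Gamma(H+\tfrac{1}{2})} \int_\R \brcb{\abs{t-u}^{H-1/2} - \abs{u}^{H-1/2}}\,\dt B_u.
\]
For the right-hand side, applying~\eqref{eq:rep_fGf_stoch} to $\widetilde B$ and using $(x)_+^\alpha - (x)_-^\alpha = x^{<\alpha>}$ gives
\[
  \widetilde B^+(t,H) - \widetilde B^-(t,H) = \frac{1}{\Gamma(H+\tfrac{1}{2})} \int_\R \Psi_t(u)\,\dt\widetilde B_u, \quad \Psi_t(u) \eqdef (t-u)^{<H-1/2>} - (-u)^{<H-1/2>},
\]
where $\Psi_t$ lies in $L^2(\R)$ because the two power terms cancel at infinity.

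The crucial step is to transfer the $\dt\widetilde B$-integral into a $\dt B$-integral. The classical Hilbert-transform identity $\mathcal H\bkt{\indi_{\ivff{0,t}}}(v) = \pi^{-1}\log\absb{(v-t)/v}$ shows that the defining formula of $\widetilde B$ is, up to a fixed constant $\kappa(H)$, the image of $B$ under $\mathcal H$ at the level of Wiener integrals. A stochastic Fubini argument (first for compactly supported smooth $\phi$, then extended by $L^2$-density via the isometry property of $\mathcal H$) then produces the exchange formula
\[
  \int_\R \phi(u)\,\dt\widetilde B_u = \kappa(H) \int_\R (\mathcal H\phi)(v)\,\dt B_v,\qquad \phi\in L^2(\R).
\]

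Applied to $\phi = \Psi_t$, the identification of kernels reduces to computing
\[
  (\mathcal H\Psi_t)(v) = \tan\pthb{(H+\tfrac{1}{2})\tfrac{\pi}{2}}\brcb{\abs{t-v}^{H-1/2} - \abs{v}^{H-1/2}},
\]
which follows from translation invariance of $\mathcal H$, the classical distributional identity $\mathcal H\bkt{x^{<\alpha>}}(v) \propto \abs{v}^\alpha$ for $-1<\alpha<1$, $\alpha\neq 0$, and the trigonometric identity $-\cot((H-\tfrac{1}{2})\tfrac{\pi}{2}) = \tan((H+\tfrac{1}{2})\tfrac{\pi}{2})$. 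Combining the three preceding displays matches the two kernels up to exactly the announced normalising factor, and continuity at $H = \tfrac{1}{2}$ follows from the expansion $\tan(\tfrac{\pi}{2}+x) = -\cot x \sim -1/x$, giving the finite limit $-2/\pi$ for $(H-\tfrac{1}{2})\tan((H+\tfrac{1}{2})\tfrac{\pi}{2})$. The main obstacle I anticipate is the rigorous justification of the Hilbert-transform identity for $\Psi_t$, since the individual summands $(t-u)^{<H-1/2>}$ are not in $L^2(\R)$ and $\mathcal H$ must be applied to them in a principal-value or distributional sense; it is only the cancellation producing $\Psi_t \in L^2$ that makes the final identification meaningful. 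The stochastic Fubini step is classical but requires care near the logarithmic singularities of the kernel defining $\widetilde B$ at $v=0$ and $v=t$.
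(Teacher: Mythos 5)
Your argument is correct in structure, but it is the time-domain avatar of the paper's frequency-domain proof rather than an independent route. The paper works with the Stoev--Taqqu harmonizable representation $B^\pm(t,H)=\tfrac{1}{2\sqrt{2\pi}}\int_\R\tfrac{\e^{it\xi}-1}{\abs{\xi}^{H+1/2}}\e^{\mp i\sign(\xi)(H+1/2)\pi/2}\,\dt\widehat W(\xi)$ and replaces $\widehat W$ by the modified spectral measure $\widetilde W(\dt\xi)=i\sign(\xi)\widehat W(\dt\xi)$; multiplication by $i\sign(\xi)$ in the spectral domain is exactly your Hilbert transform $\mathcal H$, and the $\cos/\sin$ bookkeeping of the phase factors is your identity $\mathcal H\bkt{x^{<\alpha>}}\propto\abs{x}^\alpha$ together with $-\cot((H-\tfrac12)\tfrac\pi2)=\tan((H+\tfrac12)\tfrac\pi2)$. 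What your version buys is self-containedness (no appeal to the harmonizable representation, everything stays at the level of Wiener integrals against $\dt B$); what it costs is precisely the obstacle you flag, namely justifying $\mathcal H$ on the non-$L^2$ summands of $\Psi_t$, whereas in the spectral picture the change of measure is trivially legitimate and only the identification of the underlying Brownian motion of $\widetilde W$ (done in the paper by letting $H\to\tfrac12$) requires an argument.

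One concrete point you should not gloss over is the multiplicative constant. With the standard normalisation $\mathcal Hf(x)=\pi^{-1}\,\mathrm{p.v.}\!\int f(s)(x-s)^{-1}\dt s$ (the one for which $\mathcal H\bkt{\indi_{\ivff{0,t}}}(v)=\pi^{-1}\log\abs{v/(v-t)}$ and $\mathcal H$ is an $L^2$-isometry), the kernel defining $\widetilde B$ equals $-\tfrac\pi2\,\mathcal H\bkt{\indi_{\ivff{0,t}}}$, so your exchange constant is $\kappa=-\tfrac\pi2$, not $1$; carried through, your computation yields the stated identity only up to an extra factor $-2/\pi$. This is not a defect of your method alone: it is exactly the factor by which $\lim_{H\to1/2}(H-\tfrac12)\tan((H+\tfrac12)\tfrac\pi2)=-2/\pi$ differs from the value $1$ asserted in the paper's own proof, and it reflects the fact that $\widetilde B$ as defined has variance $\tfrac{\pi^2}{4}\abs{t}$ rather than $\abs{t}$. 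So either track $\kappa$ explicitly and renormalise $\widetilde B$, or state the conclusion up to this absolute constant; as written, the claim that the kernels match ``up to exactly the announced normalising factor'' is not what your own displays deliver.
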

\begin{proof}
  There exist several ways to prove this result. One might directly obtain the expression of the fractional Brownian field $\widetilde B^\pm(t,H)$ using Propositions~\ref{prop:rep_fGf} and \ref{prop:rep_fGf_balanced}, but the calculus is unfortunately a bit tricky.

  The proof happens to be simpler if we consider the Fourier representation of $B(t,H)$ obtained by \citet{Stoev.Taqqu-2006}. For all $t\in\R$ and $H\in\ivoo{0,1}$,
  \[
    B^\pm(t,H) = \frac{1}{2\sqrt{2\pi}} \int_\R \frac{\e^{it\xi}-1}{\abs{\xi}^{H+1/2}} \e^{\mp i\sign(\xi)(H+1/2)\pi/2} \dt \widehat W(\xi),
  \]
  where $\widehat W$ is a complex Gaussian measure such that $\widehat W$ is a complex Wiener measure on $\R_+$ and $\overline{\widehat W(A)} = \widehat W(-A)$ for any $A\in\Bi(R_+)$.
  The well-balanced fractional Brownian field then has the following representation
  \[
    B(t,H) = \frac{2\cos\pthb{ (H+\frac{1}{2})\frac{\pi}{2} }}{\sqrt{2\pi}\pthb{ H-\frac{1}{2} }} \int_\R \frac{\e^{it\xi}-1}{\abs{\xi}^{H+1/2}} \dt \widehat W(\xi).
  \]
  We introduce a slightly modified complex Gaussian measure $\widetilde W$, given by
  \[
    \forall A\in\Bi(\R_+);\quad \widetilde W(A) \eqdef i \widehat W(A)\quad\text{and}\quad \widetilde W(-A) \eqdef \overline{\widetilde W(A)} = -i \widehat W(-A).
  \]
  Then, let $\widetilde B^\pm(t,H)$ denote the fractional Brownian field defined with respect to $\widetilde W$. Since $\widetilde W(\dt \xi) = i\sign(\xi)\widehat W(\dt \xi)$,
  \begin{align*}
    \widetilde B^+(t,H) - \widetilde B^-(t,H)
    &= \frac{ 2\sin\pthb{ (H+\frac{1}{2})\frac{\pi}{2} } }{\sqrt{2\pi}} \int_\R \frac{\e^{it\xi}-1}{\abs{\xi}^{H+1/2}} (-i) \sign(\xi) \,\dt \widetilde W(\xi) \\
    &= \pthb{H-\tfrac{1}{2}}\tan\pthb{ (H+\tfrac{1}{2})\tfrac{\pi}{2} } \, B(t,H).
  \end{align*}
  To conclude the proof, we need to check that the field $\widetilde B^\pm(t,H)$ corresponds to the same process defined in the proposition, i.e. the integral with respect to the Brownian motion $\widetilde B$.
  For this purpose, we simply need to consider the previous equality when $H=\tfrac{1}{2}$. We first note that $\lim_{H\rightarrow 1/2} \pthb{H-\tfrac{1}{2}}\tan\pthb{ (H+\tfrac{1}{2})\tfrac{\pi}{2} } = 1$. Hence,
  \[
    \widetilde B_t \eqdef \frac{1}{2} B\pthb{t,\tfrac{1}{2}} = \frac{1}{2} \pthB{ \widetilde B_+\pthb{t,\tfrac{1}{2}} - \widetilde B_-\pthb{t,\tfrac{1}{2}} }.
  \]
  According to Equation~\eqref{eq:rep_fGf_stoch}, this last equality proves that $\widetilde B$ is the Brownian motion with respect to which are defined the fractional Brownian fields, and thus we clearly obtain the expected representation of $B(t,H)$.
\end{proof}
The equality in law between the two processes can be obtained more directly using Theorem 4.1 from \cite{Stoev.Taqqu-2006}. Nevertheless, Proposition~\ref{prop:rep_fGf_balanced2} indicates precisely how these two fractional Brownian fields are related, and proves that the well-balanced one is simply a classic field with a change of Brownian motion.
Besides, this statement allows us to simply the study of the sample path properties of the mBm since we do not need to consider separately the specific well-balanced case $a^+=a^-$.

\section{Hölder and 2-microlocal regularity}  \label{sec:mbm_regularity}

Following the convention initiated by \cite{Stoev.Taqqu-2006}, we will call \emph{multifractional Brownian motion} a process $X$ which has the following form:
\[
  \forall t\in\R;\quad X_t = a^+ B^+\pthb{ t,H(t) } + a^- B^-\pthb{ t,H(t) },
\]
where $(a^-,a^+)\in\R^2\setminus\brc{(0,0)}$. As previously said, we may also suppose without any loss of generality that $a^+\neq a^-$. The Hurst function $t\mapsto H(t)\in\ivoo{0,1}$ will be assume to be at least a deterministic \cadlag function.

We may nevertheless note that several of the following results remain valid if $H(\cdot)$ is replaced by a stochastic process with \cadlag sample paths. This last model has already been considered by \citet{Papanicolaou.Solna-2003} and \citet{Ayache.Taqqu-2005}, where the latter has obtained some general sample path properties.
For any \cadlag function $H(\cdot)$, the set $\brcb{ t\in\R : \Delta H(t) \neq 0 }$ is countable, and therefore we can easily see from the definition of the mBm that with probability one,
\[
  \forall t\in\R;\quad \Delta H(t)\neq 0 \hem\Longleftrightarrow\hem \Delta X_t\neq 0.
\]
Furthermore, $\Delta X_t$ is a centered Gaussian variable whose variance is proportional to $\Delta H(t)^2$.\vsp

In this section, we will use specific functional spaces, called 2-microlocal spaces, to characterize the fine sample path properties of the mbm. The use of the unusual framework is particularly motivated by the deterministic fractional integrals and derivatives of Brownian motion exhibited in Proposition~\ref{prop:rep_fGf}. Indeed, the latter spaces behave particularly well under the action of fractional operators, which will allow us to deduce many properties on the mBm from the geometry of the Brownian motion.

Historically, the 2-microlocal formalism has been introduced by \citet{Bony-1986} to study the singularities of the solutions of PDEs, and has only been recently introduced in a stochastic frame by \citet{Herbin.LevyVehel-2009} to characterise the fine regularity of processes. Several characterizations of the 2-microlocal spaces have been investigated in the literature, including the following time-domain one due to \citet{Kolwankar.LevyVehel-2002} and \citet{Seuret.LevyVehel-2003}.
\begin{definition}  \label{def:2ml_spaces_01}
  Let $t\in\R$, $s'\leq 0$ and $\sigma\in\ivoo{0,1}$ such that $\sigma-s'\notin\N$. A function $f:\R\rightarrow\R$ belongs to the \emph{2-microlocal space} $C^{\sigma,s'}_t$ if there exist $C>0$, $\rho>0$ and a polynomial $P_t$ such that
  \begin{equation} \label{eq:def_2ml_spaces_01}
    \absb{\pthb{ f(u)-P_t(u) } - \pthb{ (f(v)-P_t(v) } } \leq C\abs{u-v}^\sigma \pthb{ \abs{u-t}+\abs{v-t} }^{-s'},
  \end{equation}
  for all $u,v\in B(t,\rho)$.
\end{definition}
In this work, it will only be necessary to consider the case $\sigma\in\ivoo{0,1}$. Nevertheless, Definition~\ref{eq:def_2ml_spaces_01} can be extended $\sigma\in\R\setminus\Z$. We refer to \cite{Seuret.LevyVehel-2003, Balanca-2013} for a more complete presentation on the subject.

As previously outlined, a very useful properties of the 2-microlocal spaces is their stability under the application of fractional integrations and derivations:
\begin{equation} \label{eq:2ml_spaces_fracint}
  \forall \alpha>0;\quad f\in C^{\sigma,s'}_t \quad\Longleftrightarrow\quad  I_\pm^\alpha f \in C^{\sigma+\alpha,s'}_t,
\end{equation}
where $I_\pm^\alpha f$ denotes the fractional integral of $f$ of order $\alpha$: $\pthb{I_\pm^\alpha f}(t) \eqdef \int_\R (t-u)_\pm^{\alpha-1} f(u) \,\dt u$.

Following the presentation of these functional spaces, we can introduce a related regularity tool, named the \emph{2-microlocal frontier} and defined by
\begin{equation}
  \forall s'\in\R;\quad \sigma_{f,t}(s') = \sup\brcb{\sigma\in\R : f\in C^{\sigma,s'}_{t}}.
\end{equation}
We observe that when using the 2-microlocal formalism, the local regularity at $t$ is characterised by a function, not a single coefficient. Owing to inclusion properties of the 2-microlocal spaces, the map $s'\mapsto\sigma_{f,t}(s')$ is well-defined and satisfies several interesting properties:
\begin{itemize}  \itemsep1pt
  \item $\sigma_{f,t}(\cdot)$ is a concave non-decreasing function;
  \item $\sigma_{f,t}(\cdot)$ has left and right derivatives between $0$ and $1$.
\end{itemize}
Furthermore, if the modulus of continuity of $f$ is such that $\omega_f(h) = \mathrm{O}\,(1/\abs{\log(h)})$, the pointwise and local Hölder exponents can be retrieved from the frontier:
\[
  \alpha_{f,t} = -\inf\brc{s' : \sigma_{f,t}(s')\geq 0} \quad\text{and}\quad \widetilde\alpha_{f,t} = \sigma_{f,t}(0).
\]
Finally, owing to the stability of the 2-microlocal spaces, the application of a fractional integration simply corresponds to a translation of the 2-microlocal frontier along the vertical axis:
\begin{equation*}
  \forall s'\in\R;\quad \sigma_{I_\pm^\alpha f,t}(s') = \sigma_{f,t}(s')+\alpha.
\end{equation*}
The 2-microlocal frontier of some Gaussian processes has been determined by \citet{Herbin.LevyVehel-2009}. For instance, a fractional Brownian motion $B^H$, $H\in\ivoo{0,1}$ satisfies with probability one and for all $t\in\R$:
\[
  \forall s'\in\R;\quad \sigma_{B^H,t}(s') = \pthb{H + s'}\wedge H.
\]

To investigate the 2-microlocal and Hölder regularity of the multifractional Brownian motion, we first need to properly study the behaviour of the fractional Brownian field and its successive partial derivatives. Hence, in the next proposition, we consider uniformly on the variables $t$ and $H$ the 2-microlocal frontier of $\partial^k_H B(t,H)$ along the time axis.
\begin{proposition}  \label{prop:2ml_fGf}
  Suppose $B(t,H)$ is a fractional Brownian field. For any $H\in\ivoo{0,1}$ and any $k\in\N$, let $B^{H,k}$ denotes the process $t\mapsto \partial^k_H\, B(t,H)$.
  Then, with probability one, for all $H\in\ivoo{0,1}$, $k\in\N$ and $t\in\R$
  \begin{equation}
    \forall s'\in\R;\quad \sigma_{B^{H,k},t}(s') = \pthb{H + s'}\wedge H.
  \end{equation}
\end{proposition}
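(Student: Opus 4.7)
The strategy rests on three pillars: (1) the deterministic representation of $B(t,H)$ from Proposition~\ref{prop:rep_fGf}, which exhibits $B(\cdot, H)$ as a fractional operator applied to the sample path of the underlying Brownian motion $B$; (2) the known almost-sure identity $\sigma_{B,t}(s') = \pthb{\tfrac{1}{2}+s'}\wedge\tfrac{1}{2}$, valid uniformly in $t\in\R$, obtained in \citet{Herbin.LevyVehel-2009}; and (3) the stability property \eqref{eq:2ml_spaces_fracint}, which asserts that a fractional integration of order $\alpha$ translates the 2-microlocal frontier by exactly $+\alpha$ (and is invertible on that scale, so the shift is an equality, not just an inequality).

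First I would treat the base case $k=0$. For $H\in\ivoo{\tfrac{1}{2},1}$, representation~\eqref{eq:rep_fGf_up} exhibits $B^{\pm}(\cdot,H)$ as $I_{\pm}^{H-1/2} B$ modulo an additive constant coming from the $t=0$ subtraction and a term depending on the values of $B_u$ for $u$ far from $t$; the latter is locally $C^{\infty}$ around every $t\in\R$ and therefore does not affect $\sigma_{\sbullet,t}(\cdot)$. Applying \eqref{eq:2ml_spaces_fracint} to the frontier of Brownian motion yields
\[
  \sigma_{B^{\pm}(\cdot,H),t}(s') = \pthb{\tfrac{1}{2}+s'}\wedge\tfrac{1}{2} + \pthb{H-\tfrac{1}{2}} = \pthb{H+s'}\wedge H.
\]
For $H\in\ivoo{0,\tfrac{1}{2}}$, representation~\eqref{eq:rep_fGf_down1} plays the role of a regularised fractional derivative of order $\tfrac{1}{2}-H$: the compensation $B_u - B_t\indi_{\brc{\cdot}}$ ensures convergence, and the boundary term $\pm B_t(t-T)^{H-1/2}_{\pm}$ is locally smooth in $t$ on any compact avoiding $T$. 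Again \eqref{eq:2ml_spaces_fracint} (with $\alpha = H-\tfrac{1}{2}<0$) gives the same conclusion. The case $H=\tfrac{1}{2}$ is then obtained by continuity of the frontier in $H$ via Lemma~\ref{lemma:cont_fGf}.

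Next I would extend to $k\geq 1$. Differentiating the kernel in \eqref{eq:rep_fGf_up} or \eqref{eq:rep_fGf_down1} with respect to $H$ introduces factors of the form $\log^{j}\abs{t-u}$ together with derivatives of $1/\Gamma\pthb{H\pm\tfrac{1}{2}}$, which are smooth in $H$ on $\ivoo{0,1}$. The crucial point is that logarithmic factors are subcritical on the Hölder scale: an operator with kernel $\abs{t-u}^{\alpha-1}\log^{j}\abs{t-u}$ still maps $C^{\sigma,s'}_t$ into $C^{\sigma+\alpha,s'}_t$, as one checks directly from Definition~\ref{def:2ml_spaces_01} by absorbing the logarithms into an arbitrarily small power. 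Consequently $\partial_H^k B(\cdot,H)$ has the same frontier as $B(\cdot,H)$, for every $k\in\N$.

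The main obstacle is upgrading the fixed-$(t,H,k)$ statement to a fully uniform almost-sure one. I would first fix a single full-measure event on which Brownian motion satisfies its uniform Lévy modulus and on which $\sigma_{B,t}(s') = \pthb{\tfrac{1}{2}+s'}\wedge\tfrac{1}{2}$ simultaneously for all $t\in\R$ and all $s'\in\Q_{-}$. On this event, the deterministic representations of Proposition~\ref{prop:rep_fGf} identify $B^{H,k}(\omega)$ with an explicit functional of the single sample path $B(\omega)$, so the previous arguments apply pathwise and yield the frontier for every $t,H,k$. The continuity asserted in Lemma~\ref{lemma:cont_fGf} guarantees that no modification issue arises when passing from countably many $H$ to all $H\in\ivoo{0,1}$; the transition through $H=\tfrac{1}{2}$ and the integrability of $\log^{k}\abs{t-u}$ against $\abs{t-u}^{H-3/2}$ near $u=t$ are the most delicate technical points, resolved by the same compensation mechanism used in the proof of Proposition~\ref{prop:rep_fGf}.
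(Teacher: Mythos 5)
Your overall architecture coincides with the paper's: use the deterministic representation of Proposition~\ref{prop:rep_fGf} to write $B^{H,k}$ (up to locally smooth terms) as a fractional-type operator applied to the Brownian path, invoke the known uniform frontier $\sigma_{B,t}(s')=\pthb{\tfrac{1}{2}+s'}\wedge\tfrac{1}{2}$, and transfer it through the operator. The base case $k=0$ and the pathwise/uniformity discussion are essentially what the paper does (note that the paper does not need a countable-to-uncountable passage in $H$: once $\omega$ is fixed, the argument is deterministic and valid for every $(t,H,k)$ simultaneously, so your appeal to Lemma~\ref{lemma:cont_fGf} for that purpose is superfluous).

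The genuine gap is in your treatment of the logarithmic kernels for $k\geq 1$, which is precisely where the paper spends almost all of its effort. You assert that an operator with kernel $\abs{t-u}^{\alpha-1}\log^{j}\abs{t-u}$ ``still maps $C^{\sigma,s'}_t$ into $C^{\sigma+\alpha,s'}_t$, as one checks directly from Definition~\ref{def:2ml_spaces_01} by absorbing the logarithms into an arbitrarily small power.'' This does not work as stated. First, absorbing $\log^{j}$ into $\abs{t-u}^{-\eps}$ only dominates the kernel pointwise; the stability \eqref{eq:2ml_spaces_fracint} is not a consequence of pointwise kernel size (it relies on the structure of the convolution and on cancellations in the two-point estimate \eqref{eq:def_2ml_spaces_01}), so a kernel majorization does not yield the forward mapping property, let alone with the exact shift $+\alpha$. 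Second, and more importantly, to conclude that the frontier \emph{equals} $\pthb{H+s'}\wedge H$ you need the converse implication as well --- that $I^{\alpha,k}_\pm Y\in C^{\sigma,s'}_t$ forces $Y\in C^{\sigma-\alpha-\eps,s'}_t$ --- i.e.\ you must invert the logarithmically perturbed operator on the 2-microlocal scale. No one-sided kernel bound can give this. The paper resolves both directions by computing the Fourier transform of the tempered distribution $u_\pm^{\alpha-1}\log^{k}u_\pm$ (via its Laplace transform) and running Jaffard's Littlewood--Paley argument: the multipliers $\abs{\xi}^{-\alpha}\log^{\ell}\abs{\xi}$ coincide with Schwartz functions on the dyadic annuli, so convolution preserves the polynomial decay of the blocks $\Delta_j f$ in both directions, at the cost of a factor $j^{k}$, whence the $\eps$-loss $C^{\sigma+\alpha-\eps,s'}_t$ which is harmless for the frontier. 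To complete your proof you would need to supply this two-sided Fourier-multiplier argument (or an equivalent time-domain substitute, which is not a routine verification).
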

\begin{proof}
  Since we use the deterministic representation obtained in Proposition~\ref{prop:rep_fGf}, we can set a fix $\omega\in\Omega$ in the proof.
  For the sake of readability, let us first consider $H > \tfrac{1}{2}$ and remove the normalisation constant $1/\Gamma\pthb{ H-\tfrac{1}{2} }$. Then, for any $k\in\N$, the partial derivative with respect to $H$ is equal to
  \[
    \partial^k_H\, B^\pm (t,H) = \int_\R B_u \bktB{ (t-u)_\pm^{H-3/2} \log^k(t-u)_\pm - (-u)_\pm^{H-3/2} \log^k (-u)_\pm } \dt u.
  \]
  Suppose $\ivoo{T_+,T_-}$ is a non-empty open interval. Since the 2-microlocal frontier is a purely local property of the sample paths, we may restrict our study to $t\in\ivoo{T_+,T_-}$ with any loss of generality. Then, we note that the component
  \[
    t\mapsto \int_{\ivof{\mp\infty,T_\pm}} B_u \bktB{ (t-u)_\pm^{H-3/2} \log^k(t-u)_\pm - (-u)_\pm^{H-3/2} \log^k (-u)_\pm } \dt u
  \]
  is a smooth function, and therefore does not have any influence on the 2-microlocal regularity. Hence, we focus on the study of the remaining integral
  \[
    t\in\ivoo{T_+,T_-}\mapsto \int_{\ivff{T_+,T_-}} B_u (t-u)_\pm^{H-3/2} \log^k(t-u)_\pm \dt u,
  \]
  when $H>\tfrac{1}{2}$. Similarly, the case $H<\tfrac{1}{2}$ is reduced to the analysis of
  \[
    t\in\ivoo{T_+,T_-}\mapsto \frac{\dt}{\dt s}\pthbb{ \int_{\ivff{T_\pm,s}} B_u (s-u)_\pm^{H-\frac{1}{2}} \log^k(t-u)_\pm \dt u }(t).
  \]
  If we define the process $Y_t = B_t\indi_{t\in\ivff{T_+,T_-}}$, we observe that the two previous formulas correspond respectively to fractional-like integral and derivative of $Y$ on the real axis. Compared to the usual definition, there is only a slight modification in the kernel with a logarithmic term.

  Furthermore, when $H<\tfrac{1}{2}$, the fractional-like derivative is defined as the classic derivative of a  integral of order $H+\tfrac{1}{2}$. Since we know the behaviour of the 2-microlocal regularity under derivation, we may restrict our study to the fine analysis of the following process
  \[
    t\in\ivoo{T_+,T_-}\longmapsto \pthb{ I^{\alpha,k}_\pm Y}(t) \eqdef \int_{\R} Y_u \,(t-u)_\pm^{\alpha-1} \log^k(t-u)_\pm \dt u,
  \]
  for any $\alpha\in\ivoo{0,1}$ and $k\in\N$.

  We expect the 2-microlocal frontier of this integral to behave similarly to the one of a classic fractional integral of $Y$, since the only difference lies in a logarithmic term. This easiest way to prove this statement is to compute the Fourier transform of the corresponding pseudo-differential operator and show it has an effect equivalent to the fractional integration in the Fourier space.

  The computation of Fourier transform of the tempered distribution $u^{\alpha-1}_\pm \log^k u_\pm$ is similar to the classic calculus on $u^{\alpha-1}_\pm$. We first consider the Laplace transform, with $\Re(z) > 0$. Since this transform is analytic on the previous domain, we can restrict ourselves to the case that $z > 0$. Then,
  \begin{align*}
    \forall z>0;\quad \int_{\R_+} u^{\alpha-1} \log^k(u) \,\e^{-z u} \,\dt u
    &= z^{-\alpha} \int_{\R_+} v^{\alpha-1}\pthb{ \log v - \log z }^k \e^{-v} \dt v \\
    &= z^{-\alpha} \sum_{j=0}^k (-1)^j \log^j(z) \,C^j_k \underbrace{ \int_{\R_+} v^{\alpha-1} \log^{k-j}(v) \,\e^{-v} }_{F(\alpha,k-j)}.
  \end{align*}
  As the last term is also analytic, it implies the equality for any $z\in\C$ such that $\Re(z) > 0$.

  To obtain the Fourier transform of $u^{\alpha-1}_\pm \log^k u_\pm$ as a tempered distribution, we simply have to consider $z = i\xi + \eps$ and let $\eps\rightarrow 0$. The scalar product with a test function converges, and proves that the generalized Fourier transform is equal to
  \begin{align*}
    &\Fi_g\pthb{u^{\alpha-1}_+ \log^k u_+}(\xi) \\
    &= e^{-i\sign(\xi)\tfrac{\pi}{2}\alpha} \abs{\xi}^{-\alpha} \frac{1}{\sqrt{2\pi}} \sum_{j=0}^k (-1)^j  \pthb{\log\abs{\xi}+i\sign(\xi)\tfrac{\pi}{2} }^j  \cdot C^j_k F(\alpha,k-j).
  \end{align*}
  Therefore, the Fourier transform corresponds to a weighted sum of elements $\abs{\xi}^{-\alpha}\log^n\abs{\xi}$, with a phase correction term.

  To express the action of these pseudo-differential operators on the regularity of the Brownian motion, we rely on the original definition of the 2-microlocal spaces in the Fourier space and make use of the arguments presented by \citet{Jaffard-1991} to prove the stability.
  More precisely, the aforementioned definition is based on the Littlewood--Paley decomposition of tempered distributions. Shortly, consider $\phi\in\Si(\R)$ such that its Fourier transform satisfies
  \begin{align*}
    \widehat\phi(\xi) =
    \begin{cases}
      \, 1 &\text{si}\hem \abs{\xi}\leq 1/2; \\[0.2ex]
      \, 0 &\text{si}\hem \abs{\xi}\geq 1,
    \end{cases}
  \end{align*}
  and define for any tempered distribution $f\in\Si'(\R)$ and for all $j\in\N$, $\phi_j(x) = 2^{j}\phi\pth{2^j x}$, $\psi_j = \phi_{j+1} - \phi_j$, $S_0 f = \phi \ast f$ and $\Delta_j f = \psi_j \ast f$. Then, the Littlewood--Paley decomposition tells us that $f$ can be expressed as following
  \[
    f = S_0 f + \sum_{j=0}^{\infty} \Delta_j f,
  \]
  where we observe that every term in the sum belongs to $L^2(\R)$. Roughly speaking, this decomposition corresponds to the application of band-pass filters in the Fourier space. As presented by \citet{Bony-1986,Meyer-1998}, $f\in C^{\sigma,s'}_{t}$ if and only there exists $C>0$ such that for all $j>0$,
  \[
    \abs{\Delta_j f(u)} \leq  C\,2^{-j\sigma} \pthb{2^{-j} + \abs{u - t}}^{-s'}.
  \]
  Observe that, up to a simple translation, we may assume that $t=0$. Then, define the rescaled function $U_j(x) = (\Delta_j f)(2^{-j}u)$. Its Fourier transform $\widehat U_j$ is carried by $\brc{\xi : \tfrac{1}{2}<\abs{\xi}<2}$ and $U_j$ is such that $\abs{U_j(u)}\leq C\,2^{-j(\sigma-s')} \pthb{1 + \abs{u}}^{-s'}$.
  Applying the operator $\abs{\xi}^{-\alpha}\log^k\abs{\xi}$ in the Fourier space on $f$, we obtain a corresponding collection of rescaled functions $(V_j)_{j>0}$ which satisfy for every $j>0$,
  \begin{align*}
    \widehat V_j(\xi) = \widehat U_j(\xi) \cdot \bktbb{ \sum_{\ell=0}^k c(\ell,k) \, \abs{\xi}^{-\alpha}\log^\ell\abs{\xi}  \cdot  j^{k-\ell}\, 2^{-j\alpha} },
  \end{align*}
  where $c(\ell,k)$ is a constant only depending on $\ell$ and $k$. Note that similarly to the classic fractional integral, the Fourier multipliers $\widehat K_{\alpha,\ell}(\xi) \eqdef \abs{\xi}^{-\alpha} \log^\ell\abs{\xi}$ coincide with Schwartz functions on $\tfrac{1}{2}<\abs{\xi}<2$. Coming back into the time space, $V_j$ is given by
  \[
    V_j = \sum_{\ell=0}^k j^{k-\ell} 2^{-j\alpha} \, c(\ell,k) K_{\alpha,\ell} \ast U_j,
  \]
  where $K_{\alpha,\ell}\in\Si(\R)$ for every $\ell$.
  The convolution with a Schwartz function $K_{\alpha,\ell}$ preserves the polynomial decay, hence for every $j>0$, $\abs{V_j(u)}\leq C\,j^k\,2^{-j(\sigma+\alpha-s')} \pthb{1 + \abs{u}}^{-s'}$. Rescaling by a factor $2^j$, the latter upper bound leads to the characterization of 2-microlocal spaces, proving that
  \[
    \forall s',\sigma\in\R;\quad Y\in C^{\sigma,s'}_t \Longrightarrow I^{\alpha,k}_\pm Y\in C^{\sigma-\eps,s'}_t
  \]
  for any $\eps>0$. Note that the property is slightly weaker than the classic stability under the action of fractional integration due to the logarithmic terms.

  The converse case is obtained similarly,
  \[
    \forall s',\sigma\in\R;\quad I^{\alpha,k}_\pm Y\in C^{\sigma,s'}_t \Longrightarrow Y\in C^{\sigma-\eps,s'}_t.
  \]
  The same property holds for any weighted sum of $I^{\alpha,k}_- Y$ and $I^{\alpha,k}_+ Y$, and therefore, the 2-microlocal frontier of $I^{\alpha,k}_\pm Y$ satisfies for any $H\in\ivoo{0,1}$ and $k\in\N$
  \[
    \forall s'\in\R;\quad \sigma_{I^{\alpha,k}_\pm Y,t}(s') = \sigma_{Y,t}(s') + H-\frac{1}{2} = (s'+H)\wedge H.
  \]
  It concludes the proof, since we have shown previously that the 2-microlocal frontier of $B^{H,k}$ is equal to the regularity $I^{\alpha,k}_\pm Y$.
\end{proof}
\citet{Ayache-2013} has obtained a similar result on the field $(t,H)\mapsto B^+(t,H)$ based on a multiresolution analysis of the latter.

As outlined in the introduction, the Hölder regularity of a general multifractional Brownian motion has already been investigated by \citet{Herbin-2006} and \citet{Ayache-2013}. In particular, the latter has proved that with probability one, the local Hölder exponent is given by
\[
  \forall t\in\R\setminus\brc{0};\quad \widetilde\alpha_{X,t} = H(t)\wedge\widetilde\alpha_{H,t}.
\]
Note that the behaviour at $t=0$ is slightly different as the form of the variance of increments differs at this point.\vsp

Let us first obtain a estimate for the general 2-microlocal frontier. For this purpose, we need to introduce the multiplicity of the component $h\mapsto \pthb{ B(t,h)-B(t,H) }$ of the fractional field at $(t,H)\in\R\times\ivoo{0,1}$, i.e.
\[
  m_{t,H} = \inf\brcb{ k\in\N\setminus\brc{0} : \partial_H^{k} B(t,H) \neq 0 }.
\]
\begin{theorem}  \label{th:2ml_mbm}
  Suppose $X$ is a multifractional Brownian motion with Hurst function $H$.
  Then, with probability one, the 2-microlocal frontier of $X$ at any $t\in\R$ satisfies
  \begin{align*}
    \sigma_{X,t}(s') \geq
    \pthb{ s' + H(t) }\wedge H(t)\wedge
    \sigma_{H,t}(s' + H(t))\wedge m_{t,H(t)} \sigma_{H,t}(s' / m_{t,H(t)}).
  \end{align*}
  for all $s'\geq -\alpha_{X,t}$. Furthermore, when $m_{t,H(t)} = 1$, the frontier is equal to
  \begin{align*}
    \forall s'\geq -\alpha_{X,t};\quad \sigma_{X,t}(s') = \pthb{ s' + H(t) }\wedge H(t)\wedge\sigma_{H,t}(s').
  \end{align*}
\end{theorem}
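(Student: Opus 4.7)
The plan is to decompose the increment $X_u - X_v$ at the base point $t$ into a fractional Brownian increment at fixed Hurst $H(t)$ and a remainder coming from the variation of $H$:
\[
X_u - X_v = \sum_{\eta \in \{\pm\}} a^{\eta}\bigl[ B^{\eta}(u, H(t)) - B^{\eta}(v, H(t)) \bigr] + \sum_{\eta \in \{\pm\}} a^{\eta}\bigl[ R^{\eta}(u) - R^{\eta}(v) \bigr],
\]
where $R^{\eta}(w) := B^{\eta}(w, H(w)) - B^{\eta}(w, H(t))$. The first bracket is the increment of a fractional Brownian motion with Hurst parameter $H(t)$; by Proposition~\ref{prop:2ml_fGf} applied with $k = 0$, its 2-microlocal frontier at $t$ equals $(s' + H(t)) \wedge H(t)$, which accounts for the first two terms of the lower bound.

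To handle $R^{\eta}$, I apply Taylor's formula in the $H$ variable at order $m := m_{t, H(t)}$:
\[
R^{\eta}(w) = \sum_{k=1}^{m-1} \frac{(H(w) - H(t))^k}{k!} \, \partial_H^k B^{\eta}(w, H(t)) + \frac{(H(w) - H(t))^m}{m!} \, \partial_H^m B^{\eta}\bigl(w, H^{\ast}(w)\bigr),
\]
for some $H^{\ast}(w)$ between $H(t)$ and $H(w)$. By the very definition of $m$, each derivative $\partial_H^k B^{\eta}(\cdot, H(t))$ with $1 \leq k < m$ vanishes at $w = t$, so each low-order summand is a product of $(H - H(t))^k$, whose frontier at $t$ is $k \, \sigma_{H, t}(s'/k)$ by the composition rule for $x \mapsto x^k$, and $\partial_H^k B^{\eta}(\cdot, H(t))$, whose frontier is $(s' + H(t)) \wedge H(t)$ by Proposition~\ref{prop:2ml_fGf}. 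Applying the 2-microlocal product and composition rules gathered in Appendix~\ref{sec:mbm_appendix}, each such summand contributes at least $\sigma_{H, t}(s' + H(t))$ (the shift by $H(t)$ reflecting the $H(t)$-regularity of the field-derivative factor), while the $m$-th remainder, controlled by a constant times $(H - H(t))^m$, contributes $m \, \sigma_{H, t}(s'/m)$. The general inequality $\sigma_{f + g, t}(s') \geq \sigma_{f, t}(s') \wedge \sigma_{g, t}(s')$ then yields the lower bound announced in the theorem.

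For the equality when $m = 1$, the contributions $\sigma_{H, t}(s' + H(t))$ and $\sigma_{H, t}(s')$ collapse to $\sigma_{H, t}(s')$ by monotonicity of $\sigma_{H, t}$, so the lower bound simplifies to $(s' + H(t)) \wedge H(t) \wedge \sigma_{H, t}(s')$. The matching upper bound exploits the Gaussian structure: for Gaussian processes, the 2-microlocal frontier is characterised by two-sided estimates on $\Esp\bigl[(X_u - X_v)^2\bigr]$. When $m = 1$, the nonzero Gaussian variable $\partial_H B^{\eta}(t, H(t))$ ensures that the leading remainder behaves like a constant multiple of $H(u) - H(t)$, and a direct computation shows $\Esp\bigl[(X_u - X_v)^2\bigr] \asymp |u - v|^{2 H(t)} + (H(u) - H(v))^2$ locally near $t$; the Gaussian frontier lemma from Appendix~\ref{sec:mbm_appendix} translates this bound into the matching upper estimate for $\sigma_{X, t}$.

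The main obstacle lies in the 2-microlocal product bookkeeping: both factors $(H - H(t))^k$ and $\partial_H^k B^{\eta}(\cdot, H(t))$ vanish at $t$, so neither can be treated as a smooth multiplier, and the sharp frontier is only obtained by carefully choosing how to split the parameter $s'$ between the two factors when applying the product rule. A secondary subtlety in the $m = 1$ case is to rule out systematic cancellation between the fBm-increment and the $H$-variation contributions; this is handled via the non-degeneracy of $\partial_H B^{\eta}(t, H(t))$ together with independence-like Gaussian lower bounds on the variance of the overall increment.
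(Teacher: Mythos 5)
Your overall strategy is close to the paper's: freeze the Hurst parameter, treat the frozen part with Proposition~\ref{prop:2ml_fGf}, and Taylor-expand the Hurst-variation part up to the order $m=m_{t,H(t)}$. But the way you centre the Taylor expansion creates a genuine gap in the lower bound when $m\geq 2$. You expand $h\mapsto B(w,h)$ around $H(t)$, so your $k$-th term is the \emph{function} $w\mapsto (H(w)-H(t))^k\,\partial_H^kB(w,H(t))$ and you must then estimate its increment between $u$ and $v$. The claim that $(H(\cdot)-H(t))^k$ has frontier $k\,\sigma_{H,t}(s'/k)$ at $t$ ``by the composition rule'' is false: writing $x^m-y^m=(x-y)(x^{m-1}+\dots+y^{m-1})$ produces a cross term of size $\abs{H(v)-H(t)}^{m-1}\abs{H(u)-H(v)}$, which only yields $\sigma_{H,t}\pthb{s'+(m-1)\alpha_{H,t}}$; by concavity of the frontier through $(-\alpha_{H,t},0)$ this is always $\leq m\,\sigma_{H,t}(s'/m)$ and can be strictly smaller (for $\sigma_{H,t}(s')=(s'+h)\wedge h$ one checks that the frontier of $(H-H(t))^2$ at $t$ is $(s'+2h)\wedge h$, not $(s'+2h)\wedge 2h$). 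Since for $k=m$ the factor $\partial_H^mB(\cdot,H^\ast(\cdot))$ is merely bounded (it does \emph{not} vanish at $t$), the $(H(w)-H(t))^m$ factor is your only source of smallness, and your argument does not deliver the term $m\,\sigma_{H,t}(s'/m_{t,H(t)})$ of the statement. The paper avoids this entirely by expanding $h\mapsto B(v,h)$ \emph{between $H(u)$ and $H(v)$}: the integral remainder is then bounded by $\abs{H(u)-H(v)}^m\leq C\abs{u-v}^{m\sigma_0}(\abs{u-t}+\abs{v-t})^{-ms_0'}$ directly, which is exactly membership in $C^{m\sigma_0,ms_0'}_t$. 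Relatedly, the mean-value form $H^\ast(w)$ is unusable for increment estimates (you need the integral form of the remainder), and the ``2-microlocal product and composition rules gathered in Appendix~\ref{sec:mbm_appendix}'' do not exist there --- the Appendix only contains the semi-continuity and dimension lemmas.

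Two further points. First, $m_{t,H(t)}$ is defined through the combined field $a^+B^++a^-B^-$, so the individual derivatives $\partial_H^kB^{\eta}(t,H(t))$ need not vanish for $k<m$; you must sum over $\eta$ before expanding (a harmless fix, but as written the vanishing claim is wrong). Second, for the upper bound when $m=1$ you invoke a ``Gaussian frontier lemma'' characterising $\sigma_{X,t}$ by two-sided bounds on $\Esp\bktb{(X_u-X_v)^2}$; no such lemma is in this paper. That route is legitimate in principle (it is essentially the Herbin--L\'evy V\'ehel characterisation, and the two-sided variance estimate you quote is indeed available), but the paper argues pathwise instead: it uses $\abs{\partial_HB(v,H_v)}>c_5$ on a neighbourhood of $t$ to show that the increment is comparable to $H_u-H_v$ plus the frozen fBm increment, and reads off the upper bound from whichever of the two terms is the least regular. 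If you want to keep the variance route you must cite the external result and address uniformity in $t$; otherwise you should supply the pathwise domination argument.
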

\begin{proof}
  Without any loss of generality, we can assume that $\widetilde\alpha_{H,t} > 0$, since otherwise the lower bound is equal to $0$.
  Then, let $\eps>0$, $\rho>0$, $t\in\R$ and $u,v$ be in the neighbourhood $B(t,\rho)$. We first estimate the size of the increments $X_u - X_v$:
  \begin{align*}
    B(u,H_u) - B(v,H_v)
    &= \pthb{ B(v,H_u) - B(u,H_u) }
    + \pthb{ B(v,H_v) - B(v,H_u) }.
  \end{align*}
  Using a simple integration by parts, we observe that the first term is equal to
  \begin{align*}
    B(v,H_u) - B(u,H_u)
    &= B(v,H_t) - B(u,H_t) + \int_{H_t}^{H_u} \pthb{ \partial_H B(v,h) - \partial_H B(u,h) } \dt h.
  \end{align*}
  Owing to Lemma \ref{prop:2ml_fGf}, for any $\eps>0$, there exists $c_1 > 0$ such that
  \[
    \forall u,v\in B(t,\rho),\ \forall h\in\ivff{H_t,H_u};\quad \absb{ \partial_H B(v,h) - \partial_H B(u,h) } \leq c_1 \abs{u-v}^{h-\eps}.
  \]
  Therefore, the integral term satisfies
  \[
    \absbb{ \int_{H_t}^{H_u} \pthb{ \partial_H B(v,h) - \partial_H B(u,h) } \dt h } \leq c_1 \abs{ H_t - H_u } \cdot \abs{v-u}^{H_t\wedge H_u-\eps},
  \]
  and is thus negligible in front of the first one. Still using Lemma \ref{prop:2ml_fGf}, we know that for all $u,v\in B(t,\rho)$
  \[
    \abs{ B(v,H_t) - B(u,H_t) } \leq c_2 \abs{u-v}^\sigma\pthb{\abs{u-t}+\abs{v-t}}^{-s'},
  \]
  where $s'\geq -H_t$ and $\sigma < (H_t+s')\wedge H_t$. This inequality corresponds to the first term in the lower bound.

  Using Taylor's formula, we can obtain an estimate of the second component $\pth{ B(v,H_v) - B(v,H_u) }$.
  \begin{align*}
    B(v,H_v) - B(v,H_u) &= \sum_{k=1}^{m-1} \frac{ \partial_H^{k} B(v,H_u) }{k!}(H_v  - H_u)^k
    + \int_{H_u}^{H_v} \frac{ \partial_H^{m} B(v,h) }{(m-1)!}(H_v-h)^{m-1} \dt h,
  \end{align*}
  where $m$ denotes the coefficient $m_{t,H_t}$ previously introduced.
  There exists a neighbourhood $B(t,\rho)$ of $t$ such that for every $k\in\brc{1,\dotsc,m-1}$ and for all $v\in B(t,\rho)$, $\abs{ \partial_H^{k} B(v,H_u) } \leq c_3 \abs{v-t}^{H_t-\eps}$. Hence the term $k=1$ is dominant in the sum and for all $u,v\in B(t,\rho)$
  \[
    \abs{ \partial_H B(v,H_u) }\cdot \abs{ H_v - H_u } \leq c_4 \abs{u-v}^\sigma\pthb{\abs{u-t}+\abs{v-t}}^{-s+H_t-\eps},
  \]
  for any $s'\geq -\alpha_{H,t}$ and $\sigma < \sigma_{H,t}(s')$. This inequality leads to the second term in the lower bound.

  Finally, let consider the last integral. $\partial_H^{m} B(v,h)$ can be simply bound by a constant, implying that the integral is upper bounded by $\abs{H_u-H_v}^m$. The third lower bound is a direct consequence of this estimate.\vsp

  To end the proof, we study the case $m_{t,H(t)} = 1$. The lower bound is clearly equal to $(H_t+s')\wedge H_t\wedge \sigma_{H,t}(s')$. Let set $s'\geq -\alpha_{X,t}$. Suppose first that $(H_t+s')\wedge H_t < \sigma_{H,t}(s')$. Then, the term $B(v,H_u) - B(u,H_u)$ will dominate in the increment $X_u-X_v$, therefore implying  the inequality $\sigma_{X,t}(s') \leq (H_t+s')\wedge H_t$. In the other case, we observe that
  \begin{align*}
    B(v,H_v) - B(v,H_u) &= \partial_H B(v,H_v)(H_u - H_v)
    + \int_{H_v}^{H_u} \partial_H^{2} B(v,h)(H_v - h) \dt h,
  \end{align*}
  There exists $\rho>0$ and $c_5 >0$ such that for all $v\in B(t,\rho)$, $\abs{\partial_H B(v,H_v)} > c_5$, implying that the regularity of this component is $\sigma_{H,t}(s')$ Furthermore, as previously noted, the second term is upper-bounded by $(H_u-H_v)^2$, and thus is negligible in front of the first one. Therefore, the increment $X_u-X_v$ behave similarly to $H_u-H_v$, proving that $\sigma_{X,t}(s') \leq \sigma_{H,t}(s')$.
\end{proof}

As observed in the previous proof, the study of the general form of the 2-microlocal frontier is quite technical, in particular because of the interaction which may appear between the oscillations of the Hurst function $H$ and the derivatives of the fractional Brownian field $B(t,H)$.
Nevertheless, a complete estimate can be obtained for the pointwise Hölder regularity.
\begin{proposition}  \label{prop:pointwise_mbm}
  Suppose $X$ is a multifractional Brownian motion with Hurst function $H$.
  Then, with probability one, the pointwise exponent is equal to
  \[
    \forall t\in\R;\quad \alpha_{X,t} = H(t)\wedge m_{t,H(t)} \alpha_{H,t}.
  \]
\end{proposition}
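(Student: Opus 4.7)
The plan is to establish the two inequalities $\alpha_{X,t} \geq H(t) \wedge m_{t,H(t)}\alpha_{H,t}$ and its reverse separately, via the decomposition $X_u - X_t = A_u + C_u$ with
\[
  A_u = B(u, H(t)) - B(t, H(t)), \qquad C_u = B(u, H(u)) - B(u, H(t)),
\]
in which $A$ is a pure fractional Brownian motion increment and $C$ is the remainder driven by the variation of the Hurst function.

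Set $m = m_{t,H(t)}$. By Proposition~\ref{prop:2ml_fGf} the process $A$ has pointwise exponent exactly $H(t)$ at $t$, hence $\abs{A_u} \leq C_\eps\abs{u - t}^{H(t) - \eps}$ for any $\eps > 0$ and $u$ in a random neighbourhood of $t$. For the second summand, a Taylor expansion in the $H$ variable around $H(t)$, combined with the defining property of $m$ (the partials $\partial_H^k B(v, \cdot)$ of orders $1, \dotsc, m-1$ vanish at $(t, H(t))$) and the joint continuity of $\partial_H^m B$, yields
\[
  C_u = \frac{\partial_H^m B(u, H(t))}{m!}\pthb{H(u) - H(t)}^m + o\pthb{\abs{H(u) - H(t)}^m}
\]
as $u \to t$. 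Since $\partial_H^m B(t, H(t)) \neq 0$ by the definition of $m$, this partial remains bounded above and away from zero in a small neighbourhood of $t$, so together with $\abs{H(u) - H(t)} \leq C\abs{u - t}^{\alpha_{H,t} - \eps}$ one obtains $\abs{C_u} \leq C'_\eps\abs{u-t}^{m(\alpha_{H,t} - \eps)}$. The lower bound $\alpha_{X,t} \geq H(t) \wedge m\alpha_{H,t}$ then follows from the triangle inequality $\abs{X_u - X_v} \leq \abs{X_u - X_t} + \abs{X_v - X_t}$ applied to $u, v \in B(t,\rho)$.

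For the matching upper bound, fix $\alpha > H(t) \wedge m\alpha_{H,t}$ and aim to construct a sequence $u_n \to t$ along which $\abs{X_{u_n} - X_t}/\abs{u_n - t}^\alpha \to \infty$. In the regime $m\alpha_{H,t} < H(t)$, the definition of $\alpha_{H,t}$ combined with a triangle-inequality reduction from pairs to the base point $t$ yields $u_n \to t$ with $\abs{H(u_n) - H(t)}/\abs{u_n - t}^{\alpha/m} \to \infty$; the asymptotic equivalence $\abs{C_u} \asymp \abs{H(u) - H(t)}^m$ then forces $\abs{C_{u_n}}/\abs{u_n - t}^\alpha \to \infty$, while $\abs{A_{u_n}} \leq C\abs{u_n - t}^{H(t) - \eps}$ is negligible, so $\alpha_{X,t} \leq \alpha$. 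In the symmetric regime $H(t) < m\alpha_{H,t}$ one instead chooses $u_n \to t$ that saturates the pointwise exponent $H(t)$ of the fractional Brownian motion $B(\cdot, H(t))$, which exists since fBm has pointwise exponent exactly $H(t)$.

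The main obstacle is the boundary case $H(t) = m\alpha_{H,t}$, in which the two summands $A$ and $C$ are of the same order along the critical sequences and exact pathwise cancellation is not excluded a priori. This should be handled by exploiting the Gaussian structure of the field: conditionally on the trajectory of $B(\cdot, H(t))$ near $t$, the coefficient $\partial_H^m B(t, H(t))$ keeps a non-degenerate Gaussian component, so that for any deterministically chosen sequence $u_n$ the event of persistent cancellation has probability zero. Picking $\alpha$ strictly above the common value $H(t) = m\alpha_{H,t}$ then yields the reverse bound and completes the proof.
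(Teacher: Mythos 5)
Your decomposition $X_u - X_t = A_u + C_u$ and the Taylor expansion of $C_u$ in the $H$-variable are exactly the paper's argument, and your treatment of the two generic regimes $m\alpha_{H,t} < H(t)$ and $H(t) < m\alpha_{H,t}$ matches the paper's proof, which likewise uses that $\partial_H^m B(\cdot,H(t))$ is bounded away from zero near $t$ and that the lower-order Taylor terms are $O\pthb{\abs{u-t}^{H(t)+k\alpha_{H,t}-\eps}}$, hence negligible. One inaccuracy along the way: the partials $\partial_H^k B(\cdot,H(t))$, $k<m$, vanish at $u=t$ but not at nearby $u$; they are only $O\pthb{\abs{u-t}^{H(t)-\eps}}$ there, so the remainder in your expansion of $C_u$ is $o\pthb{\abs{u-t}^{H(t)}}$ rather than $o\pthb{\abs{H(u)-H(t)}^m}$, and the asserted equivalence $\abs{C_u}\asymp\abs{H(u)-H(t)}^m$ holds only along sequences where $\abs{H(u_n)-H(t)}^m$ dominates $\abs{u_n-t}^{H(t)}$ --- which, fortunately, is the only place you use it.

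The genuine gap is your resolution of the boundary case $H(t)=m_{t,H(t)}\alpha_{H,t}$. First, the conditional non-degeneracy you invoke is not free: the entire field $(u,h)\mapsto B(u,h)$ is a deterministic functional of a single driving Brownian motion (Proposition~\ref{prop:rep_fGf}), and conditioning on the \emph{full} trajectory of $B(\cdot,H(t))$ in fact determines $\partial_H^m B(t,H(t))$, since the moving-average representation is invertible; so the claim that a non-degenerate Gaussian component survives conditioning on the path near $t$ requires a genuine argument (some local-nondeterminism-type estimate), not an assertion. Second, even granting it, your argument fixes $t$, fixes a sequence $u_n$, and discards a null event; but the sequence along which $A$ saturates its exponent is itself random (measurable with respect to what you condition on), and, more importantly, the proposition is an almost-sure statement holding simultaneously for all $t\in\R$, so a union of null events over the uncountable (and random) set $\brc{t: H(t)=m_{t,H(t)}\alpha_{H,t}}$ is not controlled. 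The paper's proof is entirely pathwise, resting on the uniform-in-$(t,H,k)$ regularity of Proposition~\ref{prop:2ml_fGf}, precisely so that no such union is needed. It is fair to note that the paper itself does not address the possibility of cancellation between the two summands when their exponents coincide --- you have put your finger on a point the published argument leaves implicit --- but your proposed repair, as written, does not close it.
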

\begin{proof}
  Let us proceed similarly to the proof of Theorem~\ref{th:2ml_mbm} and study the increment $X_u-X_t = \pthb{B(u,H_u)-B(u,H_t)} + \pthb{B(u,H_t) - B(t,H_t)}$. Owing to Lemma~\ref{prop:2ml_fGf}, the pointwise regularity of the second term is exactly $H(t)$.

  Then, we consider a slightly different Taylor expansion on the first increment,
  \begin{align*}
    B(u,H_u) - B(u,H_t) &= \sum_{k=1}^{m} \frac{ \partial_H^{k} B(u,H_t) }{k!}(H_u - H_t)^k
    + \int_{H_t}^{H_u} \frac{ \partial_H^{m+1} B(u,h) }{m!}(H_u-h)^{m} \dt h.
  \end{align*}
  For every $k\in\brc{1,\dotsc,m-1}$, there exists a simple upper bound : $\abs{\partial_H^{k} B(u,H_t)}\abs{H_u - H_t}^k \leq c_1 \abs{u-t}^{H_t+k\alpha_{H,t}-\eps}$. Hence, the pointwise regularity of this type of term is strictly greater than $H(t)$, they are therefore negligible. In the case $k=m$, we note that there exists $\rho>0$ and $c_2>0$ such that for all $u\in B(t,\rho)$, $\abs{\partial_H^{k} B(u,H_t)} \geq c_2$. Hence, the pointwise exponent of this component is exactly $m\cdot\alpha_{X,t}$. Finally, similarly to the previous proof, the last term can be simply upper bounded by $c_3 \abs{H_u-H_v}^{m+1} \leq c_3 \abs{u-t}^{(m+1)\alpha_{H,t}}$. This last estimate is sufficient to conclude the proof.
\end{proof}

The previous result constitutes an extension of the estimate obtained by \citet{Ayache-2013}. As pointed out by the latter, it shows that the multifractional Brownian motion can have a non-deterministic pointwise regularity. More precisely, \citet{Ayache-2013} has proved that for any $\eps>0$, with positive probability,
\[
  \dimH\brc{t\in\ivff{a,b} : m_{t,H(t)} > 1 } \geq 1 - \inf_{u\in\ivff{a,b}} H(u) - \eps.
\]
In the view of the result obtained in Proposition~\ref{prop:pointwise_mbm}, it would be interesting to study the Hausdorff dimension of the following sets
\[
  \forall k\in\N;\quad E_k = \brcb{t\in\ivff{a,b} : m_{t,H(t)} = k}.
\]
This question seems to be much more difficult than the previous estimate, since it involves the successive derivatives $\partial^k_H B(t,H)$ of the fractional field and thus the correlation existing between them.
Finally, we also note that in terms of regularity, the behaviour at $t=0$ is specific, since the multiplicity $m_{0,H(0)}$ is equal to infinity.

To end this section, we present a couple of examples of Hurst functions which illustrate this particular geometry of the multifractional Brownian motion.
\begin{example}  \label{ex:2ml_mbm1}
  Suppose $H:\R\rightarrow\ivoo{\tfrac{1}{2},\tfrac{3}{4}}$ is an Hurst function such that for all $t\in\R$, the 2-microlocal local frontier of $H$ at $t$ is equal to
  \[
    \forall s'\in\R;\quad \sigma_{H,t}(s') = \pthB{ s' + \frac{3}{8} }\wedge \frac{3}{8}.
  \]
  This kind of function can be easily constructed from the sample paths of a fractional Brownian motion with parameter $H=\tfrac{3}{8}$.

  Then, the 2-microlocal frontier of the corresponding multifractional Brownian motion satisfies with probability one:
  \begin{itemize}
    \item if $m_{t,H_t} = 1$,
    \[
      \forall s'\in\R; \quad \sigma_{X,t}(s') = \sigma_{H,t}(s') = \pthB{ s' + \frac{3}{8} }\wedge \frac{3}{8};
    \]
    \item if $m_{t,H_t} > 1$,
    \[
      \forall s'\in\R; \quad \sigma_{X,t}(s') = \pthb{ s' + H(t) }\wedge \frac{3}{8}.
    \]
  \end{itemize}
  These two cases are illustrated by Figure~\ref{sfig:2ml_mbm_ex1}.

  To obtain the complete determination of 2-microlocal frontier, we use Theorem~\ref{th:2ml_mbm}, observing that it implies the inequality
  \begin{align*}
    \sigma_{X,t}(s') &
    \geq \pthb{ s'+H(t) }\wedge H(t) \pthB{ s' + \frac{3}{8} + H(t) }\wedge \frac{3}{8} \wedge \pthB{ s' + \frac{3m_{t,H(t)}}{8} }\wedge \frac{3m_{t,H(t)}}{8} \\
    &= \pthb{ s' + H(t) }\wedge \frac{3}{8},
  \end{align*}
  since $H(t)\in\ivoo{\tfrac{1}{2},\tfrac{3}{4}}$. Then, the upper bound is a consequence of the pointwise exponent obtained in Proposition~\ref{prop:pointwise_mbm} and the following property of the 2-microlocal frontier:
  \[
    \forall s'\in\R;\quad \sigma_{X,t}(s') \leq \liminf_{s\rightarrow t} \widetilde\alpha_{X,t},
  \]
  proved in the Appendix (Lemma~\ref{lemma:2ml_holder_bound}).
\end{example}

\begin{figure}[ht!]
  \centering
  \begin{subfigure}[b]{0.49\textwidth}
    \centering
    \includegraphics[width=\textwidth]{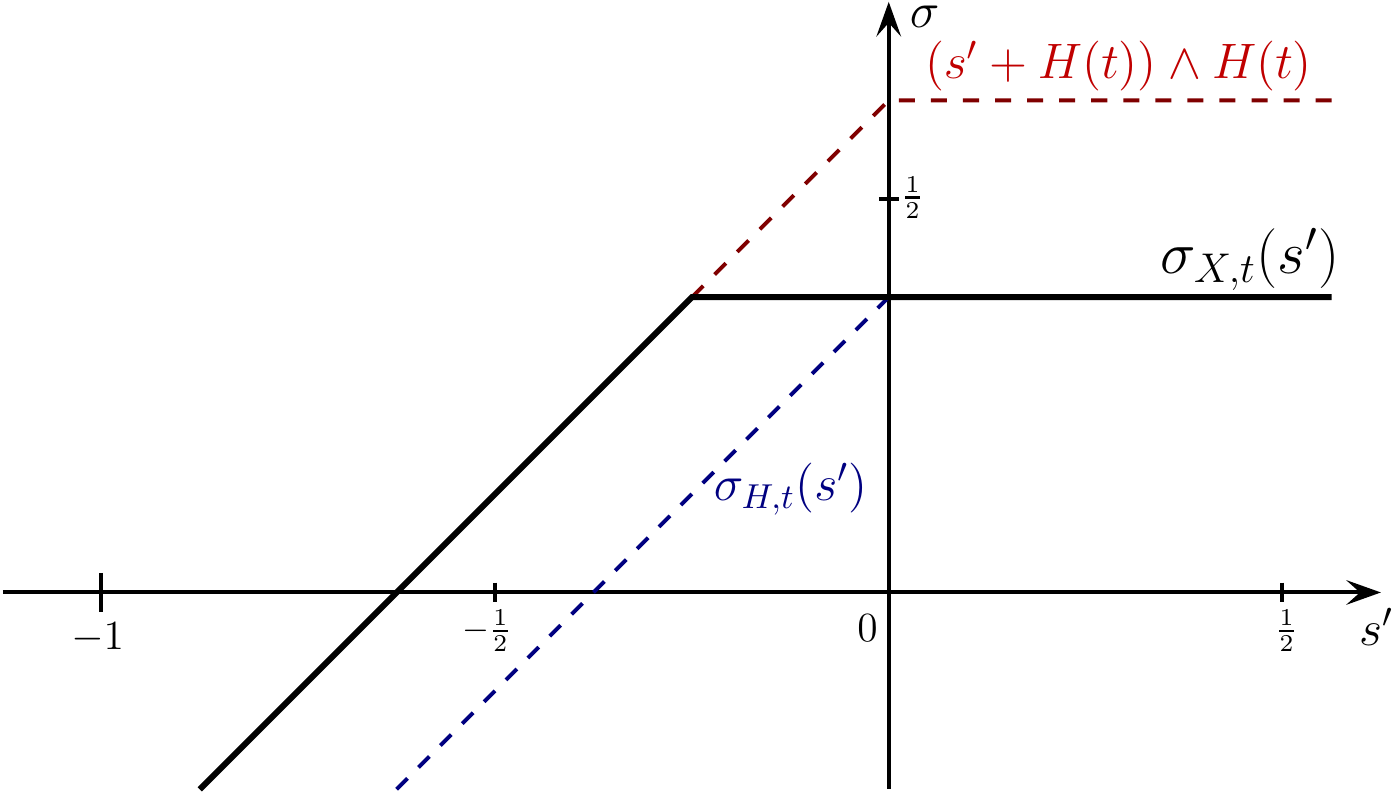}
    \caption{Example 1}
    \label{sfig:2ml_mbm_ex1}
  \end{subfigure}%
  \begin{subfigure}[b]{0.49\textwidth}
    \centering
    \includegraphics[width=\textwidth]{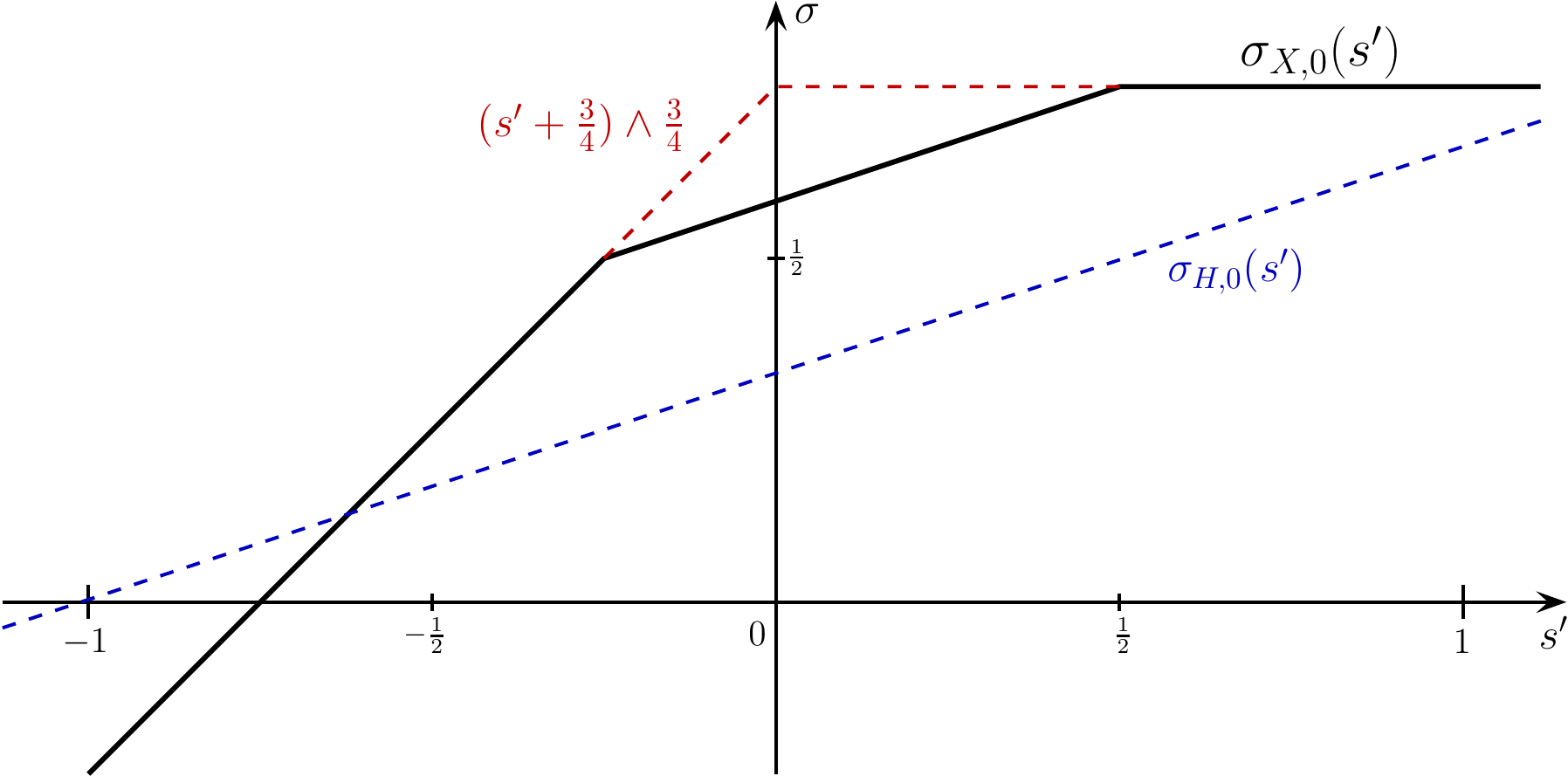}
    \caption{Example 2}
    \label{sfig:2ml_mbm_ex2}
  \end{subfigure}
  \caption{2-microlocal frontiers of multifractional Brownian motion}
  \label{fig:2ml_mbm_exs}
\end{figure}

In the following second example, we illustrate the particular Hölder regularity which may appear at $t=0$.
\begin{example} \label{ex:2ml_mbm2}
  Consider the Hurst function $H:t\mapsto \tfrac{3}{4} + t \sin\pthb{ \tfrac{1}{t^2} }$, which locally corresponds to the "chirp" function. Then, the 2-microlocal frontier of the multifractional Brownian motion at $0$ is equal to
  \[
    \forall s'\in\R;\quad \sigma_{X,0}(s') = \pthB{ s' + \frac{3}{4} }\wedge \pthB{ \frac{s'}{3} + \frac{7}{12} }\wedge\frac{3}{4}.
  \]
  Figure~\ref{sfig:2ml_mbm_ex2} gives an illustration of this particular and hybrid 2-microlocal frontier.

  To obtain this estimate, we need to refine the proof of Theorem~\ref{th:2ml_mbm}. Let first recall that the 2-microlocal frontier of $H$ at $0$ is equal to
  \[
    \forall s'\in\R;\quad \sigma_{H,0}(s') = \frac{1}{3}(s' + 1).
  \]
  If we consider the regularity of the term $\partial_H B(v,H_0) (H_v-H_v)$, we know there exists for any $\eps>0$ a sequence $u_n\rightarrow 0$ such that $\abs{\partial_H B(u_n,H_0)} \geq u_n^{H_0-\eps}$. Then, owing to form of the Hurst function, there exists $v_n\rightarrow 0$ such that $\abs{H(u_n)-H(v_n)}\geq \abs{u_n-v_n}^\sigma(u_n+v_n)^{-s'}$ where $\sigma < \sigma_{H,0}(s')$. Combining the two, we obtain that the 2-microlocal frontier of the component $\partial_H B(v,H_0) (H_v-H_v)$ is
  \[
    \forall s'\geq -\frac{7}{12};\quad \sigma(s') = \sigma_{H,0}(s'+H(0)) = \pthB{ \frac{s'}{3} + \frac{7}{12} }.
  \]
  Furthermore, we know that $m_{0,H(0)} = \infty$, inducing that the other terms in the Taylor expansion are negligible.

  We note that the 2-microlocal frontier obtained in this example is consistent with the estimate of the Hölder exponents proved in Proposition~\ref{prop:pointwise_mbm}.
\end{example}

\section{Fractal dimensions of the graph and images}  \label{sec:mbm_dimension}

In this section, we aim to extend our work on the sample path geometry of mBm by studying the fractal dimension of its graph and the images of sets. As previously, we will be particularly interested in the study of the irregular case in order to generalize the classic results of \citet{Peltier.LevyVehel-1995}.

We first need to recall a few notations on fractal dimensions. For any set $A\subset\R^d$, we respectively denote by $\dimH \,A$ and $\dimBl \,A$ (or $\dimBu \,A$) the Hausdorff and the lower (or upper) Box dimensions of $A$. In the case $\dimBl \,A=\dimBu \,A$, we denote by $\dimB \,A$ the previous quantity. The reader might refer to the book of \citet{Falconer-2003} for the precise definition and the basic properties of these fractal dimensions.

As we investigate the fractal geometry of the multifractional Brownian motion, which is a purely non-stationary process, it is more relevant to consider the localised fractal dimension. For all $A\subset\R^d$ and $x\in\R^d$, it is defined by
\[
  \dimUt{x} \,A = \lim_{\rho\rightarrow 0} \,\dimU \pthb{ A\cap B(x,\rho) },
\]
where $\dimU$ denotes any fractal dimension. The previous limit always exists as the right term is decreasing when $\rho\rightarrow 0$. We note that the map $x\mapsto\dimUt{x} \,A$ is upper semi-continuous. Finally, we also observe that the fractal dimension can always be retrieve from the localised values: $\dimU A = \sup_{x\in\R^d} \dimUt{x} A$.\vsp

In the first part of this section, we investigate the local fractal dimension of the graph of the mBm. For any function $f$, its graph on the set $V$ will be denoted $\gr(f,V)$, i.e.
\[
  \gr(f,V) = \brcb{ (t,f(t)) : t\in V }.
\]
In the case $V=\R$, we will simply use the notation $\gr(f)$. To simplify, we will write $\dimUt{t}\gr(f)$ for the local fractal dimension at $(t,f(t))$. The latter is therefore equivalently defined by
\begin{equation}  \label{eq:def_local_dim}
  \dimUt{t} \gr(f,V) \eqdef \lim_{\rho\rightarrow 0} \,\dimU\gr(f,V\cap B(t,\rho)).
\end{equation}
Several deterministic properties satisfied by the local graph dimension are presented in the Appendix~\ref{sec:mbm_appendix}, including some connections with the 2-microlocal frontier.\vsp

The graph dimension of an $\Hi_0$-multifractional Brownian motion has already been determined by \citet{Peltier.LevyVehel-1995}, proving that with probability one
\begin{equation}
  \forall t\in\R;\quad \dimHt{t} \gr(f) = \dimBt{t} \gr(f) = 2 - H(t).
\end{equation}
In the next result, we investigate the extension of this formula to the Box dimension of the graph of a general multifractional Brownian motion.
\begin{theorem}  \label{th:mbm_dim_box}
  Suppose $X$ is a multifractional Brownian motion. Then, with probability one, it satisfies
  \[
    \forall t\in\R\setminus\brc{0};\quad \dimBlt{t}\gr(X) = \pthb{ 2-H(t) } \vee \dimBlt{t}\gr(H)
  \]
  and
  \[
    \forall t\in\R\setminus\brc{0};\quad \dimBut{t}\gr(X) = \pthb{ 2-H(t) } \vee \dimBut{t}\gr(H).
  \]
  Hence, the graph has a local Box dimension at $t$ when the previous two quantities are equal.
\end{theorem}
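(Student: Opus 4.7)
I would work throughout with the standard oscillation formula for the box-counting numbers of a graph: for any bounded continuous $f$ and bounded interval $J$ partitioned into subintervals $(I_j)$ of length $\delta$,
\begin{equation*}
N_\delta\pthb{\gr(f,J)}\asymp\sum_j\pthB{1+\frac{R_f(I_j)}{\delta}},\qquad R_f(I)\eqdef\sup_I f-\inf_I f.
\end{equation*}
Taking $f=X$ and $J=B(t,\rho)$, the theorem reduces to matching two-sided estimates on the oscillations $R_X(I)$ involving the two contributions $\delta^{H(t)}$ (fractional Brownian fluctuation) and $R_H(I)$ (Hurst-function variation). We pass first to $\delta\to 0$ to recover $\dimBut{t}$ or $\dimBlt{t}$ on $B(t,\rho)$, then to $\rho\to 0$.

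\textbf{Upper bound.} I split the increment
\begin{equation*}
X_u-X_v=\bktb{B(u,H(u))-B(v,H(u))}+\bktb{B(v,H(u))-B(v,H(v))}.
\end{equation*}
By Proposition~\ref{prop:2ml_fGf} applied uniformly in $H$ near $H(t)$, the first bracket is bounded by $C|u-v|^{H(t)-\eps}$ on a sufficiently small neighborhood $B(t,\rho)$, for any $\eps>0$. The mean value theorem combined with Lemma~\ref{lemma:cont_fGf} applied to $\partial_H B$ controls the second bracket by $C\abs{H(u)-H(v)}$ with a locally bounded random constant. Hence $R_X(I)\leq C(|I|^{H(t)-\eps}+R_H(I))$ on subintervals $I\subset B(t,\rho)$, and
\begin{equation*}
N_\delta\pthb{\gr(X,B(t,\rho))}\leq C\rho\,\delta^{H(t)-\eps-2}+C\,N_\delta\pthb{\gr(H,B(t,\rho))}.
\end{equation*}
The elementary identities $\log(A+B)=\max(\log A,\log B)+O(1)$ and $\liminf_\delta\max(a_\delta,c)=\max(\liminf_\delta a_\delta,c)$ (for constant $c$) then yield the desired upper bounds for both $\dimBut{t}\gr(X)$ and $\dimBlt{t}\gr(X)$ after sending $\eps\to 0$ and $\rho\to 0$.

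\textbf{Lower bound.} I would write $X_s=B(s,H(t))+Y_s$ with $Y_s=B(s,H(s))-B(s,H(t))$. The first summand is a fractional Brownian motion of Hurst parameter $H(t)$, so the classical result of \citet{Peltier.LevyVehel-1995} gives $\sum_j R_{B(\cdot,H(t))}(I_j)\gtrsim\delta^{H(t)-1}$ almost surely for all small $\delta$ and any compact sub-neighborhood. For $Y$, Taylor's formula yields $Y_s=\partial_H B(s,H(t))(H(s)-H(t))+O((H(s)-H(t))^2)$; in the generic case $m_{t,H(t)}=1$, the factor $\partial_H B(\cdot,H(t))$ is continuous and nonzero at $t$ (Lemma~\ref{lemma:cont_fGf}), hence bounded away from $0$ on a small neighborhood of $t$, whence $R_Y(I)\gtrsim R_H(I)$. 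A case analysis on each $I_j$ according to which of $R_{B(\cdot,H(t))}(I_j)$ and $R_Y(I_j)$ dominates then gives $R_X(I_j)\gtrsim\max\pthb{R_{B(\cdot,H(t))}(I_j),R_Y(I_j)}$ on most intervals, and summing:
\begin{equation*}
\sum_j R_X(I_j)\gtrsim\max\pthbb{\delta^{H(t)-1},\sum_j R_H(I_j)}.
\end{equation*}
Taking logarithms and passing to the appropriate $\liminf$ or $\limsup$ yields the lower bounds $\dimBlt{t}\gr(X)\geq(2-H(t))\vee\dimBlt{t}\gr(H)$ and analogously for $\dimBut{t}$.

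\textbf{Main obstacle.} The genuinely delicate point is the case $m_{t,H(t)}>1$, where $\partial_H B(t,H(t))=0$ and the naive Taylor estimate $R_Y\gtrsim R_H$ degenerates. To overcome this I would use that the Box dimension is determined by oscillations over a full neighborhood $B(t,\rho)$, and that by continuity of the Gaussian field $s\mapsto\partial_H B(s,H(s))$ every such neighborhood contains an open sub-interval on which this derivative is bounded below; the oscillation contribution of $H$ is captured there. Making this precise requires a careful control on the zero set of the above Gaussian field, and is what distinguishes the present irregular-$H$ setting from the $\mathcal{H}_0$ framework of \citet{Peltier.LevyVehel-1995}.
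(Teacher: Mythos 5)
Your upper bound coincides with the paper's argument (the same increment splitting and column-covering count), but your lower bound has two genuine gaps. First, the step ``a case analysis on each $I_j$ according to which of $R_{B(\cdot,H(t))}(I_j)$ and $R_Y(I_j)$ dominates gives $R_X(I_j)\gtrsim\max(\cdot,\cdot)$ on most intervals'' does not hold: from $R_{f+g}(I)\geq\abs{R_f(I)-R_g(I)}$ one gets nothing on the intervals where the two oscillations are comparable, and there is no a priori reason these intervals carry a negligible share of the total oscillation --- precisely in the critical regime $\dimBt{t}\gr(H)=2-H(t)$ the two sums $\sum_j R_{B(\cdot,H(t))}(I_j)$ and $\sum_j R_Y(I_j)$ are of the same order and cancellation could in principle destroy the bound. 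The paper avoids this entirely by proving the two halves of the max separately and by different means: the bound $2-H(t)\leq\dimBlt{t}\gr(X)$ comes from a potential-theoretic (capacity) argument on the Hausdorff dimension using $\esp{(X_u-X_v)^2}\geq C\abs{u-v}^{2\underline{H}}$, which is immune to cancellation since it works with the variance of $X$ itself; and the bound involving $\gr(H)$ comes from the \emph{reverse} covering inequality $c\,\abs{H(u)-H(v)}\leq\abs{X_u-X_v}+c'\abs{u-v}^{H(t)-\eps}$ (valid when $\abs{\partial_H B}$ is bounded below near $(t,H(t))$), which yields $\dimBlt{t}\gr(H)\leq\pthb{2-H(t)}\vee\dimBlt{t}\gr(X)$ and then the conclusion by combining with the first half. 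Your own estimate $R_Y(I)\gtrsim R_H(I)$ could be rerouted into this reverse inequality, but as written your argument does not close.

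Second, your treatment of the case $m_{t,H(t)}>1$ is not a proof. Passing to a sub-interval $J\subset B(t,\rho)$ on which $\partial_H B(s,H(s))$ is bounded away from $0$ only yields $\dimBl\gr(X,B(t,\rho))\geq\dimBl\gr(H,J)\vee(2-\dots)$, and $\dimBl\gr(H,J)$ may be strictly smaller than $\dimBlt{t}\gr(H)$: the oscillations of $H$ responsible for the local dimension at $t$ can concentrate arbitrarily close to $t$, exactly where the derivative vanishes. No control of the zero set of the Gaussian field is needed in the paper's route: one uses that $s\mapsto\dimBlt{s}\gr(H)$ is upper semi-continuous, picks a deterministic countable set $E$ realising all local dimensions as limits along sequences $t_n\to t$ with $t_n\in E$, notes that $\partial_H B(s,H(s))$ is a nondegenerate Gaussian variable so that almost surely $m_{s,H(s)}=1$ simultaneously for all $s\in E$, and transfers the identity from the $t_n$ to $t$ by upper semi-continuity of the local dimension of $\gr(X)$. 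You correctly identified this as the delicate point, but the mechanism you sketch would not deliver the stated equality.
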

\begin{proof}
  We first obtain the upper bound for both the lower and upper Box dimensions. Let $t\in\R$ and $\rho>0$. Owing to Lemma~\ref{prop:2ml_fGf}, with probability one, there exist $c_1(\omega)$ and $\eps>0$ such that for all $u,v\in B(t,\rho)$,
  \[
    \abs{X_u-X_v} \leq c_1 \abs{u - v}^{H(t)-\eps} + c_1 \abs{H(u) - H(v)}.
  \]
  Let $N_\delta(H)$ denotes the smallest number of balls of diameter $\delta$ necessary to cover the graph of $H$ on the interval $\ivff{t-\rho,t+\rho}$. Up to a constant, we can assume that this cover is organised on columns of size $\delta$, as usually presented in the case of a graph covering. Owing to the upper bound of the increment $X_u-X_v$, we know that by adding $\ceil{\delta^{H(t)-\eps}\,\delta^{-1}}$ balls to every column covering, we obtain a covering of the graph of the process $X$. Hence, there exists a constant $c_2$ such that
  \[
    \forall \delta>0;\quad N_\delta(X) \leq c_2\pthb{ \rho \delta^{H(t)-\eps-2} + N_\delta(H) }
  \]
  and therefore
  \begin{align*}
    \frac{\log( N_\delta(X) )}{-\log(\delta)}
    &\leq \frac{\log( c_2\rho \delta^{H(t)-\eps-2} + c_2 N_\delta(H) )}{-\log(\delta)} \\
    &\leq \max\pthbb{ \frac{\log( c_3\delta^{H(t)-\eps-2}) }{-\log(\delta)},  \frac{\log( c_3 N_\delta(H)) }{-\log(\delta)} }.
  \end{align*}
  Considering the inferior and superior limits of this inequality, we respectively obtain the upper bound of the lower and upper Box dimensions.

  To prove the lower bound $2-H(t)$, we use classic potential methods that have already been presented by \citet{Peltier.LevyVehel-1995} and \citet{Herbin.Arras.ea-2012} for the multifractional Brownian motion. Briefly, for any interval $\ivff{a,b}\subset\R_+$, there exists $C>0$ such that $\esp{X_u-X_v}^2 \geq C \abs{u-v}^{2\underline{H}}$, where $\underline{H} = \min_{u\in\ivff{a,b}} H(u)$. This estimate is sufficient, using a standard potential argument to show that almost surely,
  \[
    2 - \min_{u\in\ivff{a,b}} H(u) \leq \dimH\gr(X,\ivff{a,b}) \leq \dimBl\gr(X,\ivff{a,b}) \leq \dimBu\gr(f,\ivff{a,b}).
  \]
  Considering $a,b\in\Q$, we obtain the inequality with probability one for any rational interval. Then, we note that Equation~\eqref{eq:def_local_dim} holds as well if we use in the limit rational intervals containing $t$. Hence, owing to the continuity of $H$, with probability one and for all $t\in\R$, we get $2-H(t)\leq\dimBlt{t}\gr(X)$ and $2-H(t)\leq\dimBut{t}\gr(X)$.

  To prove the remaining lower bound, let us first consider the case $m_{t,H(t)} = 1$. The constant $\rho$ can be chosen small enough such that for all $(v,h)$ in neighbourhood of $(t,H(t))$, $\abs{\partial_H B(v,h)} > c_4$, where $c_4$ is a positive constant. Furthermore, for all $u,v\in B(t,\rho)$,
  \[
    X_u-X_v = B(u,H(u)) - B(v,H(v)) + \int_{H(v)}^{H(u)} \partial_H B(v,h) \,\dt h.
  \]
  Hence, in the neighbourhood of $t$,
  \[
    c_4 \abs{ H(u) - H(v) } \leq \absbb{ \int_{H(v)}^{H(u)} \partial_H B(v,h) \,\dt h }  \leq \abs{X_u-X_v} + c_5\abs{u-v}^{H(t)-\eps}.
  \]
  Using the covering argument described at the beginning of the proof, we obtain that the inequality $\dimBlt{t}\gr(H) \leq \pthb{ 2-H(t) } \vee \dimBlt{t}\gr(X)$ and $\dimBut{t}\gr(H) \leq \pthb{ 2-H(t) } \vee \dimBut{t}\gr(X)$. Since we already know that the lower and upper Box dimensions are greater than $2-H(t)$, it induces the expected lower bound.

  Finally, let us extend the inequality to any $t\in\R\setminus\brc{0}$ and any value of $m_{t,H(t)}$. We have already noted that the function $t\mapsto\dimUt{t}\gr(H)$ is upper semi-continuous for every dimension considered. As a consequence, there exists a deterministic countable set $E\subset\R\setminus\brc{0}$ such that for all $t$, there is a sequence $(t_n)_{n\in\N}\in E$ satisfying
  \[
    \dimUt{t}\gr(H) = \lim_{n\rightarrow\infty} \dimUt{t_n}\gr(H),
  \]
  Since for every $s\in E$, $\partial_H B(s,H(s))$ is a centered Gaussian variable with a positive variance, with probability one and for all $s\in E$, $\partial_H B(s,H(s)) \neq 0$. Hence, as the local dimension is upper semi-continuous,
  \begin{align*}
    \dimBlt{t}\gr(X)
    &\geq \limsup_{n\in\N} \,\dimBlt{t_n}\gr(X) \\
    &= \limsup_{n\in\N} \,\pthb{ 2-H(t_n) } \vee \dimBlt{t_n}\gr(H)
    = \pthb{ 2-H(t) } \vee \dimBlt{t}\gr(H).
  \end{align*}
  The argument holds as well for the upper Box dimension, therefore uniformly proving the result.
\end{proof}
Interestingly, we have obtained an expression of the Box dimension of the graph which is similar to the behaviour observed on a fractional Brownian motion with variable drift (see the recent works \cite{Charmoy.Peres.ea-2012,Peres.Sousi-2012}). This common property happens to extend as well to the Hausdorff dimension of the graph, recently obtained by \citet{Peres.Sousi-2013} on the fBm with variable drift.

The Hausdorff dimension of the graph is a bit more complicate to analyse. For this purpose, we have to introduce a more general dimension called the \emph{parabolic Hausdorff dimension}. We first need to define a \emph{parabolic metric} $\varrho_H$ on $\R^2$, with $H>0$:
\[
  \varrho_H\pthb{ (u,x) \,; (v,y) } \eqdef \max\pthb{ \abs{u-v}^{H}, \abs{x-y} }.
\]
For any set $A\subset\R^2$, we denote by $\dimH(A \,;\varrho_H)$ the \emph{parabolic Hausdorff dimension} of $A$. It is defined similarly to the classic Hausdorff dimension using covering balls relatively to the metric $\varrho_H$, i.e. it corresponds to the infimum of $s\geq 0$ for which
\[
  \lim_{\delta\rightarrow 0} \inf\brcbb{ \sum_{i=0}^\infty \diam(O_i\,; \varrho_H)^s : (O_i)_{i\in N} \text{ is a $\delta$-cover of $A$} } < \infty
\]
Note that given the form of the parabolic metric, a $\varrho_H$-ball $B((t,x),\delta)$ has the following form: $B((t,x),\delta) = \ivff{t-\delta^{1/H},t+\delta^{1/H}}\times\ivff{x-\delta,x+\delta}$.

The parabolic Hausdorff dimension has first been considered by \citet{Taylor.Watson-1985} for the study of polar sets of the heat equation. Recently, it has been proved to be useful by \citet{Khoshnevisan.Xiao-2012} to analyse the geometry of the images of Brownian motion. Finally, as previously outlined, \citet{Peres.Sousi-2013} have shown it is also a natural idea for the study of fractional Brownian motion with variable drift. Note that the convention used in the latter work is slightly different from ours.
\begin{theorem}  \label{th:mbm_dim_haus}
  Suppose $X$ is a multifractional Brownian motion. Then, with probability one, it satisfies
  \[
    \forall t\in\R\setminus\brc{0};\quad \dimHt{t}\gr(X) = 1 + H(t)\pthb{ \dimHt{t}\pthb{\gr(H) \,; \varrho_{H(t)}} - 1 }.
  \]
\end{theorem}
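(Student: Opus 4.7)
The plan is to establish matching upper and lower bounds, following the strategy of \citet{Peres.Sousi-2013} for fractional Brownian motion with a variable drift. The upper bound is a direct covering argument based on the Hölder estimate from Proposition~\ref{prop:2ml_fGf}, while the lower bound is a Frostman-type potential argument that has to be adapted to the fact that the ``drift'' part of $X$ is not independent of its fractional Brownian component.

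For the upper bound, let $s > s_0 \eqdef \dimHt{t}\pthb{\gr(H) \,; \varrho_{H(t)}}$ and fix a $\varrho_{H(t)}$-cover $(B_i)$ of $\gr(H) \cap (B(t,\rho) \times \R)$ by balls of radii $\delta_i$ with $\sum_i \delta_i^s$ arbitrarily small. The time-projection of each $B_i$ has Euclidean length $\lesssim \delta_i^{1/H(t)}$, and on it $H$ varies by at most $2\delta_i$. By Proposition~\ref{prop:2ml_fGf} applied with arbitrarily small $\eta > 0$, $X$ varies above this projection by at most $O(\delta_i^{1-\eta})$, so the corresponding piece of $\gr(X)$ sits inside a rectangle of size $\delta_i^{1/H(t)} \times \delta_i^{1-\eta}$ that is covered by $O(\delta_i^{1-\eta-1/H(t)})$ Euclidean balls of radius $\delta_i^{1/H(t)}$. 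A direct computation then shows that the $(1 + H(t)(s-1) + H(t)\eta)$-sum of these Euclidean radii is bounded by a constant multiple of $\sum_i \delta_i^s$; letting $\eta \to 0$ and $s \downarrow s_0$ yields $\dimHt{t}\gr(X) \leq 1 + H(t)(s_0 - 1)$. Uniformity in $t$ follows by transferring the argument to a countable dense set of intervals and invoking upper semi-continuity of the localised dimension, exactly as in the proof of Theorem~\ref{th:mbm_dim_box}.

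For the lower bound, given $s < 1 + H(t)(s_0 - 1)$, the plan is to construct a probability measure $\mu$ on $\gr(X) \cap (B(t,\rho) \times \R)$ with finite Euclidean $s$-energy, so that Frostman's lemma gives $\dimHt{t}\gr(X) \geq s$. The key Gaussian ingredient is that, via the Taylor expansion of $B(v,\cdot)$ around $H(t)$ together with the $L^2$ control on $\partial_H B$ from Proposition~\ref{prop:2ml_fGf}, the variance of $X_u - X_v$ is comparable to $\abs{u-v}^{2H(t)} + (H(u)-H(v))^2 \asymp \varrho_{H(t)}\pthb{(u,H(u)) \,; (v,H(v))}^2$ (in the regime $m_{t,H(t)} = 1$; a higher-order analogue involving $\partial_H^{m_{t,H(t)}} B$ handles the general case). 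A standard Gaussian small-ball estimate then delivers
\[
  \mathbb{E}\bktb{\pthb{(u-v)^2 + (X_u - X_v)^2}^{-s/2}} \;\lesssim\; \abs{u-v}^{1-s}\,\varrho_{H(t)}^{-1}
\]
in the relevant range $s \in (1,2)$. Taking $\mu$ to be the push-forward by $u \mapsto (u, X_u)$ of a suitable Borel measure $\tilde\nu$ on the time axis adapted to the parabolic geometry of $\gr(H)$ (in the spirit of \citet{Peres.Sousi-2013}), a case split according to whether $\abs{u-v}^{H(t)}$ or $\abs{H(u)-H(v)}$ dominates $\varrho_{H(t)}$ gives $\mathbb{E}[I_s(\mu)] < \infty$ for all $s < 1 + H(t)(s_0 - 1)$.

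The main obstacle will be the lower bound: the correlation between the fBm part $u \mapsto B(u, H(t))$ and the variable-drift part $u \mapsto B(u, H(u)) - B(u, H(t))$ prevents the direct conditioning arguments used by \citet{Peres.Sousi-2013} in their deterministic-drift setting, and must be replaced here by the direct Gaussian variance estimate sketched above. Additional technical care is required for the higher-order analogue of the variance bound when $m_{t, H(t)} > 1$ and for the exclusion $t \neq 0$, both of which reflect the same degeneracy of the fractional Brownian field already encountered in Theorem~\ref{th:mbm_dim_box}.
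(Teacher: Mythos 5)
Your upper bound is correct and is essentially the paper's argument in compressed form: the paper first shows $\dimHt{t}\pthb{\gr(X)\,;\varrho_{H(t)}}\leq\dimHt{t}\pthb{\gr(H)\,;\varrho_{H(t)}}$ by exactly the covering you describe (based on $\abs{X_u-X_v}\leq c\abs{u-v}^{H(t)-\eps}+c\abs{H(u)-H(v)}$) and then converts the parabolic dimension into the Euclidean one via the deterministic Lemma~\ref{lemma:dim_para_bounds}; your single count of Euclidean balls of radius $\delta_i^{1/H(t)}$ fuses these two steps and the exponent bookkeeping checks out.

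The lower bound, however, contains a genuine gap. In the regime where $\abs{H(u)-H(v)}$ dominates, say $\abs{H(u)-H(v)}>\abs{u-v}^{H(t)(1-\eps)}$, one has $\sigma_{u,v}\asymp\abs{H(u)-H(v)}\gg\abs{u-v}$, and for $s>1$ the Gaussian estimate $\espb{(\abs{u-v}^2+\abs{X_u-X_v}^2)^{-s/2}}\asymp\abs{u-v}^{1-s}\sigma_{u,v}^{-1}$ is sharp, not merely an upper bound. Writing $\abs{H(u)-H(v)}\approx\abs{u-v}^{\beta}$ with $\beta\leq H(t)(1-\eps)$, the integrand is of order $\abs{u-v}^{1-s-\beta}$, whereas the Frostman condition only controls $\iint\abs{H(u)-H(v)}^{-\gamma}\,\mu(\dt u)\mu(\dt v)\approx\iint\abs{u-v}^{-\beta\gamma}\,\mu(\dt u)\mu(\dt v)$ on that set; comparing exponents, finiteness of the expectation requires $s\leq 1+\beta(\gamma-1)$, which degenerates to $s\leq 1$ as $\beta\to 0$. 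Since the interesting case of the theorem is precisely when $H$ is rougher than $H(t)$-Hölder on a set charged by $\mu$, your claimed conclusion $\esp{I_s(\mu)}<\infty$ for all $s<1+H(t)(\gamma-1)$ is false in general: the expectation of the energy diverges on this second regime. The paper's proof handles that regime \emph{pathwise} rather than in expectation: for fixed $t$ one has $m_{t,H(t)}=1$ almost surely, hence $\abs{\partial_H B(v,h)}\geq c>0$ in a neighbourhood of $(t,H(t))$ and $\abs{X_u-X_v}\geq c'\abs{H(u)-H(v)}$ there, so the second piece of the energy is bounded almost surely by $c\iint\abs{H(u)-H(v)}^{-s}\,\mu(\dt u)\mu(\dt v)$, finite whenever $s<\gamma$ --- a condition implied by $s<1+H(t)(\gamma-1)$ since $\gamma>1$. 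The passage from fixed $t$ to all $t\neq 0$ is then done through the upper semi-continuity of $t\mapsto\dimHt{t}\pthb{\gr(H)\,;\varrho_{H(t)}}$ (Lemma~\ref{lemma:dim_para_usc}) over a countable set, not through a higher-order Taylor analogue for $m_{t,H(t)}>1$ as you propose. Your argument needs this almost-sure comparison $\abs{X_u-X_v}\gtrsim\abs{H(u)-H(v)}$; the purely distributional variance estimate cannot replace it.
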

\begin{proof}
  To begin with, we prove that with probability one,
  \[
    \forall t\in\R\setminus\brc{0};\quad \dimHt{t}\pthb{\gr(X) \,; \varrho_{H(t)}} = \dimHt{t}\pthb{\gr(H) \,; \varrho_{H(t)}}
  \]
  The sketch of the proof of this equality is similar to Theorem~\ref{th:mbm_dim_box}, as it uses as well an ad-hoc covering argument. Let us set $\delta>0$, $\rho>0$ and $\gamma > \dimHt{t}\pthb{\gr(H) \,; \varrho_{H(t)}}$. Suppose $(O_i)_{i\in\N}$ is a $\delta$-cover of $\gr(H,\ivff{t-\rho,t+\rho})$ with of $\varrho_{H(t)}$-balls and such that
  \[
    \sum_{i=0}^\infty \delta_i^\gamma < \infty\quad\text{where }\delta_i \eqdef \diam(O_i\,; \varrho_{H(t)})\hem \forall i\in\N.
  \]
  We observe that $\abs{X_u-X_v} \leq c_1 \abs{u - v}^{H(t)-\eps} + c_1 \abs{H(u) - H(v)}$ for all $u,v\in B(t,\rho)$. For any $i\in\N$, we need at most $2\delta_i^{1-\eps/H(t)}\cdot\delta_i^{-1}$ $\varrho_{H(t)}$-balls of diameter $\delta_i$ to cover the part of the graph related to the term $\abs{u - v}^{H(t)-\eps}$. Hence, from $(O_i)_{i\in\N}$, we construct a $\delta$-cover $(V_k)_{k\in\N}$ of $\gr(X,\ivff{t-\rho,t+\rho})$ which satisfies
  \[
    \sum_{k=0}^\infty \diam(V_k\,; \varrho_{H(t)})^s \leq c_2 \sum_{i=0}^\infty \delta_i^{-\eps/H(t)} \cdot \delta_i^s.
  \]
  The last series converges when $s \geq \gamma + \eps/H(t)$. Hence, considering the limits $\eps\rightarrow 0$, $\rho\rightarrow 0$ and $\gamma \rightarrow \dimHt{t}\pthb{\gr(H) \,; \varrho_{H(t)}}$, we obtain $\dimHt{t}\pthb{\gr(X) \,; \varrho_{H(t)}} \leq \dimHt{t}\pthb{\gr(H) \,; \varrho_{H(t)}}$.\vsp

  To prove the other side inequality, we proceed similarly to the proof of Theorem~\ref{th:mbm_dim_box}. We first assume that $m_{t,H(t)} = 1$. In this case, the estimate of the increments presented in Theorem~\ref{th:mbm_dim_box} and the previous reasoning similarly imply that $\dimHt{t}\pthb{\gr(H) \,; \varrho_{H(t)}} \leq \dimHt{t}\pthb{\gr(X) \,; \varrho_{H(t)}}$. Then, we use the upper semi-continuity of the function $t\mapsto\dimHt{t}\pthb{\gr(H) \,; \varrho_{H(t)}}$, obtained in Lemma~\ref{lemma:dim_para_usc}, to extend the inequality to any $t\in\R\setminus\brc{0}$.\vsp

  The upper bound of $\dimHt{t}\gr(X)$ is then a consequence of the deterministic Lemma~\ref{lemma:dim_para_bounds}, with $H_1 = 1$ and $H_2 = H(t)$. With probability one, for all $t\in\R$, we have $\dimH \gr(X,\ivff{t-\rho,t+\rho}) \leq 1 + H(t)\pthb{ \dimH\pthb{ \gr(X,\ivff{t-\rho,t+\rho})\,;\varrho_{H(t)}} -  1 }$. Therefore, when $\rho\rightarrow 0$, we obtain the first upper bound with probability one: for all $t\in\R\setminus\brc{0}$, $\dimHt{t} \gr(X) \leq 1 + H(t)\pthb{ \dimHt{t} \pthb{ \gr(H)\,;\varrho_t} - 1 }$.\vsp

  To prove the lower bound, we make use of potential arguments on the parabolic and euclidean metrics. We first set $t\in\R\setminus\brc{0}$. Then, let $\rho>0$ and $\gamma < \dimH\pthb{ \gr(X,B(t,\rho))\,;\varrho_{H(t)}}$. Using Frostman's lemma applied on the parabolic metric $\varrho_t$, there exists a probability measure $\mu$ on $B(t,\rho)$ such that
  \[
    \Ei_\gamma(\mu_H) = \iint_{B(t,\rho)^2} \frac{ \mu(\dt u)\,\mu(\dt v) }{ \pthb{ \abs{u-v}^{H(t)} +\abs{ H(u)-H(v) } }^\gamma } < \infty.
  \]
  Suppose $\mu_X$ denotes the image of the measure $\mu$ by the map $u\mapsto(u,X_u)$. To prove the lower on the dimension of the graph of $X$, we need to study $s$-energy of $\mu_X$, i.e.
  \begin{align*}
    \Ei_s(\mu_X)
    =\iint_{\R^4} \frac{\mu_X(\dt x)\,\mu_X(\dt y)}{ \norm{x-y}^s }
    = \iint_{B(t,\rho)^2} \frac{\mu(\dt u)\,\mu(\dt v)}{ \pthb{ \abs{u-v}^{2} +\abs{ X_u-X_v }^2 }^{-s/2} },
  \end{align*}
  where $s > 1$. Let us set $\eps>0$. To study the previous energy, we need to split the integral into two components, respectively denoted $\Ei^1_s$ and $\Ei^2_s$:
  \begin{align*}
    \Ei^1_s + \Ei^2_s = \iint_{B(t,\rho)^2} \frac{\mu(\dt u)\,\mu(\dt v)}{ \pthb{ \abs{u-v}^{2} +\abs{ X_u-X_v }^2 }^{-s/2} }\pthB{ &\indi_{\brcb{\abs{H(u)-H(v)}\leq\abs{u-v}^{H(t)(1-\eps)}}} \\
    + &\indi_{\brcb{\abs{H(u)-H(v)}>\abs{u-v}^{H(t)(1-\eps)}}} }.
  \end{align*}
  Let us prove $\Ei^1_s$ is finite. For this purpose, we estimate its expectation
  \begin{align*}
    \esp{ \Ei^1_s } = \iint_{B(t,\rho)^2} \espB{ \pthb{ \abs{u-v}^{2} +\abs{ X_u-X_v }^2 }^{-s/2} }\indi_{\brcb{\abs{H(u)-H(v)}\leq\abs{u-v}^{H(t)(1-\eps)}}} \mu(\dt u)\,\mu(\dt v).
  \end{align*}
  Using the notation $\sigma_{u,v}^2 \eqdef \esp{X_u-X_v}^2$, the inner term is equal to
  \begin{align*}
    \espB{ \pthb{ \abs{u-v}^{2} +\abs{ X_u-X_v }^2 }^{-s/2} }
    &= \frac{2}{\sigma_{u,v}\sqrt{2\pi}} \int_{\R_+} \pthb{ x^2 +\abs{u-v}^2 }^{-s/2} \exp\pthbb{ -\frac{x^2}{2\sigma^2_{u,v}} } \,\dt x \\
    &= c_1 \int_{\R_+} \pthb{ r \sigma_{u,v}^2 +\abs{u-v}^2 }^{-s/2} r^{-1/2} \e^{-r/2} \,\dt r.
  \end{align*}
  Splitting the integral into two terms, and using classic approximations (see e.g. \cite{Falconer-2003}), the previous term is upper bounded by
  \begin{align*}
    \int_0^{\abs{u-v}^2/\sigma_{u,v}^2} \abs{u-v}^{-s} r^{-1/2} \,\dt r
    + \int_{\abs{u-v}^2/\sigma_{u,v}^2}^\infty r^{-s/2} \sigma_{u,v}^{-s}  r^{-1/2} \,\dt r
    \leq c_2 \abs{u-v}^{1-s} \sigma_{u,v}^{-1}.
  \end{align*}
  We recall that \citet{Herbin-2006} has proved that
  \[
    \sigma_{u,v}^2 = \esp{X_u-X_v}^2 \asymp \abs{u-v}^{2H(t)} + \abs{H(u)-H(v)}^2
  \]
  in the neighbourhood of $t$. Hence, on the domain $\brcb{\abs{H(u)-H(v)}\leq\abs{u-v}^{H(t)(1-\eps)}}$, we observe that
  \begin{align*}
    \abs{u-v}
    &=\pthb{ \abs{u-v}^{2H(t)(1-\eps)} }^{1/2H(t)(1-\eps)} \\
    &\geq c_3 \pthb{ \abs{u-v}^{2H(t)(1-\eps)} + \abs{H(u)-H(v)}^2 }^{1/2H(t)(1-\eps)} \\
    &\geq c_4 \,\sigma_{u,v}^{1/H(t)(1-\eps)}.
  \end{align*}
  The expectation of $\Ei^1_s$ is therefore upper bounded by
  \begin{align*}
    \esp{ \Ei^1_s }
    &\leq c_4 \iint_{B(t,\rho)^2} \sigma_{u,v}^{-1+(1-s)/H(t)(1-\eps)} \,\mu(\dt u)\,\mu(\dt v) \\
    &\leq c_5 \iint_{B(t,\rho)^2} \pthb{ \abs{u-v}^{H(t)} +\abs{ H(u)-H(v) } }^{-1+(1-s)/H(t)(1-\eps)}\,\mu(\dt u)\,\mu(\dt v).
  \end{align*}
  The term $\esp{ \Ei^1_s }$ is thus finite when $-1+(1-s)/H(t)(1-\eps) > - \gamma$, i.e. when
  \[
    s < 1 + (\gamma-1)\pthb{H(t)(1-\eps) }\quad\text{where } \gamma < \dimH\pthb{ \gr(X,B(t,\rho))\,;\varrho_{H(t)}}.
  \]
  Hence, considering the limits on $\gamma$ and $\eps$, the upper bound obtained on $s$ corresponds to the expected estimate.

  Let now consider the second term $\Ei^2_s$. We know that $m_{t,H(t)}=1$ almost surely. Recall that the increment $X_u - X_v$ has the following form
  \[
    X_u-X_v = B(u,H(u)) - B(v,H(u)) + \int_{H(v)}^{H(u)} \partial_H B(v,h) \,\dt h.
  \]
  If $\rho$ is sufficiently small, there exists a positive constant $c_1 > 0$ such that for all $s\in B(t,\rho)$ and any $h\in\ivff{\min_{B(t,\rho)} H(u),\max_{B(t,\rho)} H(u)}$, we have $\abs{\partial_H B(v,h)} \geq c_1$.
  Furthermore, there also exists $c_2 > 0$ such that
  \[
    \forall u,v\in B(t,\rho);\quad \absb{B(u,H(u)) - B(v,H(u))} \leq c_2 \abs{u-v}^{H(t)(1-\eps/2)}.
  \]
  Using the previous estimates,
  \begin{align*}
    \abs{X_u-X_v}
    &\geq \absbb{ \int_{H(v)}^{H(u)} \partial_H B(v,h) \,\dt h } - \absb{B(u,H(u)) - B(v,H(u))} \\
    &\geq c_1 \abs{ H(u)-H(v) } \pthb{ 1 - c_2\abs{ H(u)-H(v) }^{-1}\cdot\abs{u-v}^{H(t)(1-\eps/2)} }
  \end{align*}
  Hence, on the domain $\brcb{\abs{H(u)-H(v)}>\abs{u-v}^{H(t)(1-\eps)}}$, we obtain
  \begin{align*}
    \abs{X_u-X_v}
    \geq c_1 \abs{ H(u)-H(v) } \pthb{ 1 - c_2 \abs{u-v}^{H(t)\eps/2} }
    \geq c_3 \abs{ H(u)-H(v) },
  \end{align*}
  when $\rho$ is sufficiently small. Therefore, the energy term $\Ei^2_s$ satisfies
  \begin{align*}
    \Ei^2_s
    &\leq c_4 \iint_{B(t,\rho)} \abs{H(u)-H(v)}^{-s} \indi_{\brc{\abs{H(u)-H(v)}>\abs{u-v}^{H(t)(1-\eps)}} }\,\mu(\dt u)\,\mu(\dt v) \\
    &\leq c_5 \iint_{B(t,\rho)^2} \pthb{ \abs{u-v}^{H(t)} +\abs{ H(u)-H(v) } }^{-s}\,\mu(\dt u)\,\mu(\dt v).
  \end{align*}
  The last integral is finite when $s < \gamma$. But, since $\gamma > 1$, we remark
  that $\gamma(1-H(t)) > 1-H(t)$ and thus $\gamma > 1 + H(t)(\gamma-1)$, showing that $\Ei^2_s$ is always finite when $s < 1 + (\gamma-1)H(t)$. Therefore, we have proved that the $s$-energy $\Ei_s(\mu_X)$ is almost surely finite for all $s < 1 + (\gamma-1)\pthb{H(t)(1-\eps)}$. Considering the limits $\eps\rightarrow 0$ and $\gamma \rightarrow \dimHt{t}\pthb{ \gr(X)\,;\varrho_{H(t)}}$, we obtain
  \[
    \dimHt{t}\gr(X) \geq 1 + H(t)\pthb{ \dimHt{t}\pthb{\gr(H) \,; \varrho_{H(t)}} - 1 } \hem\text{a.s.}
  \]
  The lower bound has been obtained for a fixed $t\in\R\setminus\brc{0}$. To extend the latter uniformly on $\R\setminus\brc{0}$, we proceed similarly to Theorem~\ref{th:mbm_dim_box}. Shortly, as proved in Lemma~\ref{lemma:dim_para_usc}, the map $t\mapsto\dimHt{t}\pthb{\gr(H) \,; \varrho_{H(t)}}$ is upper semi-continuous. Hence, there exists a countable set $E$ such that for any $t$, there is sequence $(t_n)_{n\in\N}\in E$ satisfying
  \[
    \dimHt{t}\pthb{\gr(H) \,; \varrho_{H(t)}} = \lim_{n\rightarrow\infty} \dimHt{t_n}\pthb{\gr(H) \,; \varrho_{H(t_n)}}.
  \]
  The equality on the dimension of the graph holds almost surely for all $s\in E$. Therefore, with probability one, for all $t\in\setminus\brc{0}$,
  \begin{align*}
    \dimHt{t}\gr(X)
    &\geq \limsup_{n\in\N} \dimHt{t_n}\gr(X) \\
    &= 1 + \limsup_{n\in\N} \,H(t_n)\pthb{ \dimHt{t_n}\pthb{\gr(H) \,; \varrho_{H(t_n)}} - 1 } \\
    &= 1 + H(t)\pthb{ \dimHt{t}\pthb{\gr(H) \,; \varrho_{H(t)}} - 1 }.
  \end{align*}
  This last inequality concludes the proof, since the upper bound is already uniform on $\R$.
\end{proof}
Note that Theorem~\ref{th:mbm_dim_haus} does not contradict the result obtained by \citet{Peltier.LevyVehel-1995} on $\Hi_0$-mBms. Indeed, under this assumption $\Hi_0$ and due to the form of the parabolic metric $\varrho_{H(t)}$, covering the graph of $H$ in the neighbourhood of $t$ is equivalent to covering a straight line, implying that the local dimension satisfies: $\dimHt{t}\pthb{\gr(H) \,; \varrho_{H(t)}} = H(t)^{-1}$. Hence, the application of Theorem~\ref{th:mbm_dim_haus} proves that with probability one
\[
  \forall t\in\R\setminus\brc{0};\quad \dimHt{t} \gr{X} = 2 - H(t),
\]
where $X$ is an $\Hi_0$-multifractional Brownian motion.

It is interesting to observe that the Hausdorff and Box dimensions of the graph of  mBm and fBm with variable drift have a similar form despite the rather different Gaussian structures of these two processes. Indeed, the first one is a centered process whose incremental variance satisfies
\[
  \esp{X_u-X_v}^2 \asymp \abs{u-v}^{2H(t)} + \abs{H(u)-H(v)}^2,
\]
whereas the latter still has the covariance of fBm, but a mean equal to the drift $f(t)$.

As a consequence of these similarities, the examples constructed by \citet{Peres.Sousi-2013} can also be applied to the mBm, proving that the graph of the latter may have different Hausdorff and Box dimensions.
\begin{example}  \label{ex:mbm_gr_dim1}
  \citet{Peres.Sousi-2013} have computed the parabolic Hausdorff dimension of the celebrated deterministic construction of \citet{McMullen-1984}. Hence, there exists a continuous function $H:\R\mapsto\ivfo{1/2,1}$ such that for all $t\in\R$, $\widetilde\alpha_{X,t} = \log(2) / \log(3) \eqdef \theta$,
  \[
    \dimHt{t}\pthb{\gr(H) \,; \varrho_{1/2}} = \log_2\pthb{5^{2\theta}+1}\hem\text{and}\hem \dimBt{t}\gr(H) = 2 - \theta.
  \]
  Then, considering the mBm $X$ with Hurst function $H$, we first observe that with probability one, for all $t>0$, $\widetilde\alpha_{X,t} = \theta$. Furthermore, as a consequence of the properties of $H$, at every $t>0$ such that $H(t) = 1/2$,
  \begin{equation*}  \label{eq:mbm_gr_dim1}
    \dimHt{t}\gr(X) = \frac{1+\log_2\pthb{5^{2\theta}+1}}{2} < \dimBt{t}\gr(X) = 2 - \theta.
  \end{equation*}
  Hence, at these particular times, the local Hausdorff dimension is strictly smaller that the Box one. In addition, as shown by \citet{Peres.Sousi-2013}, the local Hausdorff dimension is such that $\dimHt{t}\gr(X) > \max\brcb{ \dimHt{t}\gr(H), 3/2 }$, meaning the simple and straightforward lower bound on the Hausdorff dimension is not optimal.
\end{example}
This example illustrates the kind of unusual fractal geometry the mBm can present. It clearly shows that the parabolic Hausdorff dimension is the relevant concept, as in this case, $\dimHt{t}\gr(H)$ happens to be insufficient to characterise entirely the Hausdorff dimension of the graph. We also note that in contrary to fBm, the fractal dimension of the graph can be completely unrelated to the local Hölder exponent.

As presented in the following second example, more classic sample path properties also occur.
\begin{example}  \label{ex:mbm_gr_dim2}
  Suppose $H$ is a sample path of a fractional Brownian motion with Hurst exponent $\alpha$. Up to a rescaling, it is assumed to be valued in the interval $\ivoo{0,1}$. Then, using some classic estimates (see e.g. \cite{Falconer-2003}), we easily obtain that
  \[
    \forall t\in\R;\quad \dimHt{t}\pthb{\gr(H) \,; \varrho_{H(t)}} = \max\brcbb{ \frac{1}{H(t)}, 1 + \frac{1-\alpha}{H(t)} }.
  \]
  Then, owing to Theorem~\ref{th:mbm_dim_haus}, a mBm $X$ with Hurst function $H$ satisfies with probability one:
  \[
    \forall t\in\R\setminus\brc{0};\quad \dimHt{t} \gr(X) = \dimBt{t} \gr(X) = \max\brcb{ 2-\alpha,2-H(t) } = 2 - \widetilde\alpha_{X,t}.
  \]
\end{example}
This example displays a more common fractal geometry, where the Hausdorff and Box dimensions are both equal to $2 - \widetilde\alpha_{X,t}$. Nevertheless, the local Hölder exponent may still have an irregular behaviour, depending on the value of $H(t)$.\vsp

In the second part of this section, we are interested in studying the fractal dimension of an image $X(F)$, where $F$ can be any fractal set.
A large literature has already investigated the question of the geometry of images of Gaussian processes. It is a well-known result (see e.g. \cite{Kahane-1985}) that for any fixed Borel set $F\in\ivoo{0,\infty}$,
\begin{align}  \label{eq:fbm_images}
  \dimH \,B^H(F) = \min\brcbb{1,\frac{\dimH F}{H}}\quad\text{a.s.}
\end{align}
\citet{Mountford-1989a,Khoshnevisan.Wu.ea-2006,Wu.Xiao-2007a} have generalized the previous result to $(N,d)$-(fractional) Brownian sheet, and in particular, obtained uniform results on $F$ under some assumption on the dimensions $N$ and $d$.

Adapting classic arguments from \citet{Kahane-1985}, we can easily extend Equation~\eqref{eq:fbm_images} to an $\Hi_0$-mBm, i.e. for any fixed Borel set $F\in\ivoo{0,\infty}$ and all $t\in\ivoo{0,\infty}$
\begin{align}  \label{eq:mbm_images_H0}
  \dimHt{X(t)} \,X(F) = \min\brcbb{1,\frac{\dimHt{t} \,F}{H(t)}}\quad\text{a.s.}
\end{align}

In the irregular case, the statement is not as simple and straightforward. Furthermore, the \emph{parabolic Hausdorff dimension} again happens to be the relevant tool to describe the geometry of $X(F)$.
\begin{theorem}  \label{th:mbm_dim_images}
  Suppose $X$ is a multifractional Brownian motion. Then, with probability one,
  \[
    \forall t\in\R\setminus\brc{0};\quad \dimHt{X(t)} \,X(F) = \min\brcB{ 1, \dimHt{t} \pthb{ \gr\pth{ H,F }\,; \varrho_t } }.
  \]
\end{theorem}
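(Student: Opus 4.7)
The proof structure closely parallels that of Theorem~\ref{th:mbm_dim_haus}: an upper bound via a covering argument, a lower bound via a Frostman--type potential estimate on the pushed-forward measure, and a final uniform extension to all $t \in \R \setminus \brc{0}$ via upper semi-continuity.

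For the upper bound, fix $\eps > 0$ and $\rho > 0$ small enough that Lemma~\ref{prop:2ml_fGf} yields
\[
  \absb{ X_u - X_v } \leq c_1 \pthb{ \abs{u-v}^{H(t)-\eps} + \abs{H(u)-H(v)} }, \quad u,v \in B(t,\rho),
\]
and pick $\gamma > \dimHt{t}\pthb{ \gr(H, F) \,; \varrho_{H(t)} }$. Starting from a $\delta$-cover $(O_i)_{i\in\N}$ of $\gr(H, F\cap B(t,\rho))$ by $\varrho_{H(t)}$-balls with diameters $\delta_i = \diam(O_i \,; \varrho_{H(t)})$ satisfying $\sum_i \delta_i^\gamma < \infty$, each $O_i$ pulls back to a time-slice of diameter at most $\delta_i^{1/H(t)}$ on which $H$ varies by at most $\delta_i$. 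The previous increment bound then forces the image under $X$ of this slice to have Euclidean diameter at most $c_2 \delta_i^{1-\eps/H(t)}$, and $\sum_i \pthb{ c_2 \delta_i^{1-\eps/H(t)} }^s$ converges as soon as $s\pthb{1-\eps/H(t)} \geq \gamma$. Sending successively $\eps \to 0$, $\rho \to 0$ and $\gamma \downarrow \dimHt{t}\pthb{ \gr(H, F) \,; \varrho_{H(t)} }$, together with the trivial bound $\dimHt{X(t)} X(F) \leq 1$, yields the upper bound.

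For the lower bound at a fixed $t \in \R \setminus \brc{0}$ with $m_{t, H(t)} = 1$, let $\gamma < \dimHt{t}\pthb{ \gr(H,F) \,; \varrho_{H(t)} }$ and $s < \min\brc{1, \gamma}$. Frostman's lemma applied to the parabolic metric produces a probability measure $\mu$ supported on $F \cap B(t,\rho)$ with $\Ei_\gamma\pth{\mu\,; \varrho_{H(t)}} < \infty$. Let $\mu_X$ denote its pushforward by $u \mapsto X_u$. Since $s<1$, the Gaussian density bound gives $\espb{\abs{X_u - X_v}^{-s}} = c_s \sigma_{u,v}^{-s}$ with $\sigma_{u,v}^2 \eqdef \espb{(X_u-X_v)^2}$. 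Under the assumption $m_{t,H(t)} = 1$, Herbin's local comparison (already invoked in the proof of Theorem~\ref{th:mbm_dim_haus}) gives $\sigma_{u,v}^2 \asymp \abs{u-v}^{2H(t)} + \abs{H(u)-H(v)}^2$ in a neighbourhood of $t$, so by Fubini
\[
  \espb{ \Ei_s(\mu_X) } \leq c \iint_{B(t,\rho)^2} \frac{\mu(\dt u)\,\mu(\dt v)}{\pthb{\abs{u-v}^{H(t)} + \abs{H(u)-H(v)}}^s} \leq c\, \Ei_s\pth{\mu\,; \varrho_{H(t)}} < \infty,
\]
hence $\mu_X$ has finite $s$-energy almost surely and $\dimH X(F \cap B(t,\rho)) \geq s$. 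Taking $s \uparrow \min\brc{1, \gamma}$, $\gamma \uparrow \dimHt{t}\pthb{ \gr(H,F) \,; \varrho_{H(t)} }$ and $\rho \downarrow 0$ yields the matching lower bound at $t$.

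The final step is to promote the two pointwise inequalities to a uniform statement on $\R \setminus \brc{0}$, which is carried out exactly as in the closing paragraph of Theorem~\ref{th:mbm_dim_haus}: by upper semi-continuity of $t \mapsto \dimHt{t}\pthb{ \gr(H, F) \,; \varrho_{H(t)} }$ (Lemma~\ref{lemma:dim_para_usc}) one extracts a countable set $E \subset \R \setminus \brc{0}$ through which every $t$ can be approximated; since $\partial_H B(s, H(s))$ is a centered non-degenerate Gaussian variable for each $s \in E$, we have $m_{s, H(s)} = 1$ simultaneously on $E$ almost surely, so the fixed-$t$ lower bound applies throughout $E$, and the upper semi-continuity of $t \mapsto \dimHt{X(t)} X(F)$ transfers the sharp value to every $t \in \R \setminus \brc{0}$. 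The main obstacle is precisely the treatment of points where $m_{t, H(t)} > 1$: there the variance equivalence degenerates to $\sigma_{u,v}^2 \asymp \abs{u-v}^{2H(t)} + \abs{H(u)-H(v)}^{2 m_{t, H(t)}}$ and the direct potential argument only yields a strictly suboptimal exponent; the extension via the countable set $E$ is what circumvents this difficulty.
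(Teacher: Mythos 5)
Your proof follows the same architecture as the paper's: a covering argument for the upper bound, and a parabolic Frostman measure pushed forward through $X$, combined with the Gaussian negative-moment identity and Herbin's incremental-variance comparison, for the lower bound. Your upper bound is a self-contained covering version of what the paper obtains more tersely by projecting $\gr(X,F)$ onto the second coordinate and invoking Lemma 2.1 of \cite{Peres.Sousi-2013}; both routes are fine, and your explicit handling of the minimum with $1$ is actually cleaner than the paper's.

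One assertion in your closing paragraph is false and should be removed. The incremental variance $\sigma_{u,v}^2 = \esp{(X_u-X_v)^2}$ is a deterministic function of $(u,v)$, determined by the covariance of the fractional Brownian field and by the Hurst function; it cannot depend on the multiplicity $m_{t,H(t)}$, which is a random quantity (it records whether $\partial_H B(t,H(t))(\omega)$ vanishes). Herbin's comparison $\sigma_{u,v}^2 \asymp \abs{u-v}^{2H(t)} + \abs{H(u)-H(v)}^2$ therefore holds near every $t\neq 0$ unconditionally, and the claimed degeneration to $\abs{H(u)-H(v)}^{2m_{t,H(t)}}$ when $m_{t,H(t)}>1$ does not occur. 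This is exactly why the present theorem is easier than Theorem~\ref{th:mbm_dim_haus}: there one needs a \emph{pathwise} lower bound on $\abs{X_u-X_v}$ in terms of $\abs{H(u)-H(v)}$, and the multiplicity genuinely intervenes, whereas here only the $L^2$ estimate is used. Your restriction to $m_{t,H(t)}=1$ is thus unnecessary; the detour through the countable set $E$ does no harm and still yields the uniform statement (some countable-set argument is needed anyway, since the Frostman measure and its null set depend on $t$), so the proof as written reaches the correct conclusion --- only the stated reason for the detour is wrong.
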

\begin{proof}
  The proof of this result is similar to the work of \citet{Peres.Sousi-2013} on the drifted fBm. Let us first remark that
  \[
    \dimHt{X(t)} \,X(F) \leq \dimHt{t} \gr(X,F).
  \]
  This inequality is a simple consequence of the property of the Hausdorff under the application of Lipschitz function (here, the projection $p:(t,X(t))\mapsto X(t)$).
  Then, using Lemma 2.1 from \cite{Peres.Sousi-2013}, we obtain $\dimHt{t} \gr(X,F) \leq \dimHt{t} \pthb{ \gr\pth{ H,F }\,; \varrho_t }$, thus proving the upper bound.

  To obtain the lower bound, we use a classic potential argument. Owing to Frostman's lemma, for any $\rho>0$ and any $\gamma < \dimH \pthb{ \gr\pth{ H,F\cap B(t,\rho) } \,;\varrho_{H(t)}}$, there exists a probability measure $\mu$ on $B(t,\rho)$ such that
  \[
    \iint_{B(t,\rho)^2} \frac{ \mu(\dt u)\,\mu(\dt v) }{ \pthb{ \abs{u-v}^{H(t)} +\abs{ H(u)-H(v) } }^\gamma } < \infty.
  \]
  Recall, as proved by \citet{Herbin-2006} the covariance of the mBm has the following form
  \[
    \sigma_{u,v}^2 \eqdef \esp{X_u-X_v}^2 \asymp \abs{u-v}^{2H(t)} + \abs{H(u)-H(v)}^2
  \]
  Suppose $\mu_X$ denotes the image of the measure $\mu$ under the map $u\mapsto X_u$. Then, for any $s\in\ivoo{0,1}$, the expectation of its $s$-energy is equal to
  \begin{align*}
    \espBB{ \iint_{\R^2} \frac{\mu_X(\dt x)\,\mu_X(\dt y)}{ \norm{x-y}^s } }
    &= \iint_{\R^2} \espb{ \abs{ X_u-X_v }^{-s} } \mu(\dt u)\,\mu(\dt v) \\
    &= c_2 \iint_{\R^2} \sigma_{u,v}^{-2s} \,\mu(\dt u)\,\mu(\dt v) \\
  \end{align*}
  Therefore, owing to the form of the covariance, the integral is finite when $s\leq\gamma$. Hence, considering the limit $\rho\rightarrow 0$, we obtain $\dimHt{X(t)} \,X(F) \geq \dimHt{t} \pthb{ \gr\pth{ H,F }\,; \varrho_t }$, which concludes the proof.
\end{proof}

The local parabolic Hausdorff dimension $\dimHt{t} \pthb{ \gr\pth{ H,F }\,; \varrho_t }$ might not seem to be a very intuitive notion at first. To have a better intuition of the result described in Theorem~\ref{th:mbm_dim_images}, we may first note, similarly to Theorem~\ref{th:mbm_dim_haus}, it does not contradict Equation~\eqref{eq:mbm_images_H0} in the case of an $\Hi_0$-multifractional Brownian motion. Indeed, when $H(t) < \widetilde\alpha_{H,t}$, due to the property of the parabolic metric $\varrho_{H(t)}$, it is completely equivalent to cover $F$ and $\gr(H,F)$ in the neighbourhood of $t$. Hence, in this case
\begin{align*}
  \dimHt{t} \pthb{ \gr\pth{ H,F }\,; \varrho_t }
  &= \lim_{\rho\rightarrow 0} \dimH \pthb{ \gr\pth{ H,F\cap B(t,\rho) }\,; \varrho_t } \\
  &= \lim_{\rho\rightarrow 0} H(t)^{-1} \,\dimH \pthb{ F\cap B(t,\rho) }
  = H(t)^{-1}\, \dimHt{t} \,F.
\end{align*}
The next example shows that different behaviours may occur when the mBm is irregular.
\begin{example}  \label{ex:mbm_dim_images}
  Let $H$ be a trajectory of fractional Brownian motion defined on the interval $\ivff{0,1}$, with Hurst index $\alpha\in\ivoo{0,1}$. Up to a rescaling and a translation, the sample path satisfies $H(\ivff{0,1})\subset\ivoo{0,1}$.\vsp

  We first assume that $\alpha < H(t)$. Owing to Equation~\eqref{eq:fbm_images}, there exists a Borel set $F$ such that $\dimHt{H(t)} \,H(F) = \alpha^{-1}\dimHt{t}\,F$.
  Furthermore, since the proof of this statement is based on Frostman's lemma, there exists a probability measure on $F\cap B(t,\rho)$ such that $\iint_{\R^2} \abs{H(u)-H(v)}^{-\gamma} \mu(\dt u)\mu(\dt v) < \infty$ where $\gamma < \dimHt{H(t)} \,H(F)$. Therefore, still using Frostman's lemma, we obtain that  $\alpha^{-1}\dimHt{t}\,F  \leq \dimHt{t} \pthb{ \gr\pth{ H,F }\,; \varrho_t }$. The other side inequality is simply obtained by observing that $\widetilde\alpha_{X,t} = \alpha$, inducing the upper bound using classic covering arguments. Hence, we obtain
  \begin{equation} \label{eq:ex1_mbm_images}
    \dimHt{X(t)} \,X(F) = \dimHt{H(t)} \,H(F) = \frac{1}{\alpha} \dimHt{t} \,F\quad\text{a.s.}
  \end{equation}

  For the second example, we consider the set $F = H^{-1}\brcb{\tfrac{1}{2}}$, which might be assumed to be non-empty. \citet{Monrad.Pitt-1987} have proved that $\dimH\,F = 1 - \alpha$. In this case, as the Hurst function $H$ is constant on the set $F$, we easily prove that the dimension of $\gr(H,F)$ is characterised by the dimension on $F$. Therefore,
  \begin{equation} \label{eq:ex2_mbm_images}
    \dimHt{X(t)} \,X(F) = \dimHt{t} \pthb{ \gr\pth{ H,F }\,; \varrho_t } = \frac{1}{H(t)} \dimHt{t}\, F = \frac{1-\alpha}{H(t)} \quad\text{a.s.}
  \end{equation}

  The two previous examples show that the estimate obtained in Theorem~\ref{th:mbm_dim_images} can lead to very different results, depending on the properties of the set $F$ chosen. In particular, we observe that the Hausdorff geometry of images displays a richer structure than the local H\"older regularity. Indeed, in the irregular case, we know that the later only depends on $H(\cdot)$ sample path properties, whereas Equations \eqref{eq:ex1_mbm_images} and \eqref{eq:ex2_mbm_images} show a more complex geometry of the image $X(F)$. In the first case, we observe that the formula depends only on the local exponent of $H$ and the Hausdorff dimension of $F$, whereas in the second one, Equation \eqref{eq:ex2_mbm_images} presents a case where the Hausdorff dimension follows the classic formula \eqref{eq:fbm_images}, even though the mBm is irregular at $t$.
\end{example}

To conclude this section, we discuss the question of obtaining the Hausdorff dimension of the level sets $X^{-1}(\brc{x})$. This problem has already been investigated by \citet{Boufoussi.Dozzi.ea-2006} for the $\Hi_0$-multifractional Brownian motion, proving in particular that for all $t\in\R\setminus\brc{0}$,
\[
  \dimHt{t} \,X^{-1}(\brc{ X_t }) = 1 - H(t)\quad \text{a.s.}
\]
It is quite natural to wonder if the previous result can be extended to the irregular mBm. The upper bound can easily extended, since it is a direct consequence of the Hölder regularity of the process at $t$. In addition, we note that the proof from \cite{Boufoussi.Dozzi.ea-2006} of the lower bound stands as well in the irregular case. As a consequence, we easily obtain bounds for all $t\in\R\setminus\brc{0}$,
\[
  1- H(t) \,\leq\, \dimHt{t} \,X^{-1}(\brc{ X_t }) \,\leq\, 1 - \min\pthb{ H(t),\widetilde\alpha_{H,t} } \quad \text{a.s.}
\]
Lower and upper bounds are not equal in the irregular case, and the remaining question is whether the lower one can be improved. The sketch of the proof of this inequality is rather classic and relies on the study of the Hölder continuity of the local time (see e.g. the seminal works of \citet{Berman-1973} and \citet{Geman.Horowitz-1980} on the subject). The key point to prove the Hölder regularity is a property called the \emph{local nondeterminism (LND)}. In the case of the mBm, it states that for every $n\in\N$, there exists $c_n>0$ such that for all $(t_1,\dotsc,t_n,t)\in\ivoo{\eps,\infty}^{n+1}$
\[
  \varc{X_t}{X_{t_1},\dotsc,X_{t_n}} \geq c_n \min_{1\leq k \leq n} \abs{t-t_k}^{2H(t)}.
\]
To obtain a better lower bound for the local dimension of the level set, we would need to improve the exponent in the right term. Unfortunately, the next lemma proves that it is optimal under a mild assumption on the Hurst function.
\begin{lemma}
  Let $t\in\R$ and suppose there exists a sequence $s_i\rightarrow t$ such that $H(s_i) = H(t)$.
  Then, for any $n\in\N$, any $\eps>0$ and any $c_n>0$, there exists $(t_1,\dotsc,t_n)\in\R^n$ such that
  \[
     \varc{X_t}{X_{t_1},\dotsc,X_{t_n}} \leq c_n \min_{1\leq k \leq n} \abs{t-t_k}^{2H(t)-\eps}.
  \]
\end{lemma}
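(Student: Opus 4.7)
The core observation is that for the centred Gaussian process $X$, the conditional variance $\varc{X_t}{X_{t_1},\dotsc,X_{t_n}}$ equals the squared $L^2$-distance from $X_t$ to the linear span of $X_{t_1},\dotsc,X_{t_n}$; taking $X_{t_n}$ itself as predictor yields the elementary upper bound
\[
  \varc{X_t}{X_{t_1},\dotsc,X_{t_n}} \leq \esp{(X_t - X_{t_n})^2}.
\]
The plan is to place the point $t_n$ on the subsequence $(s_i)$ so that $H(t_n) = H(t)$, and invoke the covariance equivalence of \citet{Herbin-2006} (already recalled in the proof of Theorem~\ref{th:mbm_dim_haus}),
\[
  \esp{(X_t - X_u)^2} \asymp \abs{t-u}^{2H(t)} + \abs{H(t)-H(u)}^2,
\]
to collapse the right-hand side to $O\pthb{\abs{t-t_n}^{2H(t)}}$. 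Since $2H(t) > 2H(t) - \eps$, this quantity becomes arbitrarily smaller than $\abs{t-t_n}^{2H(t)-\eps}$ once $\abs{t-t_n}$ is small.

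Concretely, I would fix $n\in\N$, $\eps>0$ and $c_n>0$, then pick any points $t_1,\dotsc,t_{n-1}$ at distance $1$ from $t$, and choose $t_n\eqdef s_i$ for $i$ so large that (i) $\abs{t-s_i}$ lies inside the neighbourhood on which Herbin's estimate holds with a uniform constant $C$, (ii) $\abs{t-s_i}<1$, and (iii) $C\abs{t-s_i}^\eps\leq c_n$. Since $H(t_n)=H(t)$, the Herbin bound collapses to $\esp{(X_t-X_{t_n})^2}\leq C\abs{t-t_n}^{2H(t)}$. As $t_n$ is then the closest of the $t_k$'s to $t$, we have $\min_{1\leq k\leq n}\abs{t-t_k}^{2H(t)-\eps}=\abs{t-t_n}^{2H(t)-\eps}$ when $2H(t)-\eps>0$; in the complementary case $2H(t)-\eps\leq 0$ the minimum is bounded below by $1$ while the conditional variance still tends to $0$ with $\abs{t-t_n}$, and the conclusion is even easier. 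The claimed inequality then follows in one line by writing $\abs{t-t_n}^{2H(t)}=\abs{t-t_n}^\eps\cdot\abs{t-t_n}^{2H(t)-\eps}$ and using (iii).

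There is no genuine obstacle: the argument simply records that along a subsequence where the Hurst exponent sticks to its limit value $H(t)$, the mBm behaves locally like a fractional Brownian motion with Hurst index $H(t)$, whose increments have variance of exact order $\abs{\cdot}^{2H(t)}$. The only sensitive point is the uniformity of the constant $C$ in Herbin's covariance equivalence on a fixed neighbourhood of $t$, but this is standard and was already implicitly used in Theorem~\ref{th:mbm_dim_haus}. The resulting sharpness of the LND exponent is precisely the mild obstruction that prevents the level-set lower bound from being improved in the irregular case.
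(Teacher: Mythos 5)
Your proposal is correct and follows essentially the same route as the paper: bound the conditional variance by the single-point prediction error $\esp{(X_t-X_{t_k})^2}$, use a point of the sequence $(s_i)$ so that Herbin's covariance estimate collapses to $C\abs{t-t_k}^{2H(t)}$, and absorb the constant $c_n$ via the extra factor $\abs{t-t_k}^{\eps}$ by taking the point close enough to $t$. The only cosmetic difference is that the paper places all the $t_k$ on the sequence $(s_i)$ while you place only one and park the rest at distance one, which changes nothing of substance.
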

\begin{proof}
  For all $(t_1,\dotsc,t_n)\in\R^n$ and any $k\in\brc{1,\dotsc,n}$, we note that
  $\varc{X_t}{X_{t_1},\dotsc,X_{t_n}} \leq \varc{X_t}{X_{t_k}} \leq \espb{(X_t-X_{t_k})^2}$.
  Using elements $t_k\in\brc{s_i}_{i\in\N}$ and the estimate of the variance of the increments, we obtain that
  \[
    \varc{X_t}{X_{t_1},\dotsc,X_{t_n}} \leq C \min_{1\leq k \leq n} \abs{t-t_k}^{2H(t)}.
  \]
  Let set $\rho>0$. We may assume that $t_k\in B(t,\rho)$ for every $k\in\brc{1,\dots,n}$, implying that
  \[
    \varc{X_t}{X_{t_1},\dotsc,X_{t_n}} \leq C \rho^\eps \min_{1\leq k \leq n} \abs{t-t_k}^{2H(t)-\eps}.
  \]
  Using $\rho$ sufficiently small, it proves that the property of local nondeterminism does not stand with the exponent $2H(t)-\eps$, for any $\eps>0$.
\end{proof}
Hence, the classic exponent $2H(t)$ in the LND property of the mBm is most of the time optimal. Intuitively, it can be understood as following: in the neighbourhood of $t$, the Hurst function can influence the Hölder regularity, but is still a deterministic component and thus does not bring any randomness which could increase the conditional variance. As the consequence, the latter is only a result of the local fBm form of the multifractional Brownian motion.

Therefore, the estimate of the Hausdorff dimension of the level sets can not be improved using the classic methods based on the LND property, and it remains an open question to determine the precise form of the fractal dimension in terms of the geometric properties of the Hurst function $H$.

\section{Appendix: a few deterministic properties}  \label{sec:mbm_appendix}

We gather in the appendix a few deterministic properties related to 2-microlocal analysis and local graph dimensions and which have been used through the paper.
We begin with a simple upper bound on the 2-microlocal which extends the classic property of lower semi-continuity on the local Hölder exponent.
\begin{lemma}  \label{lemma:2ml_holder_bound}
  Suppose $f$ is a continuous function and $t\in\R$ such that $\liminf_{u\rightarrow t} \widetilde\alpha_{f,t} < 1$. Then, the 2-microlocal frontier of $f$ at $t$ satisfies
  \[
    \forall s'\in\R;\quad \sigma_{f,t}(s') \leq \liminf_{u\rightarrow t} \,\widetilde\alpha_{f,u}.
  \]
\end{lemma}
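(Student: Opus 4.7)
The plan is to prove the contrapositive-style statement: for any $\sigma < \sigma_{f,t}(s')$ that also satisfies $\sigma < 1$, we will show that $\widetilde\alpha_{f,u_0} \geq \sigma$ for every $u_0$ in some punctured neighborhood of $t$, hence $\liminf_{u\to t} \widetilde\alpha_{f,u} \geq \sigma$. The hypothesis $\liminf_{u\to t} \widetilde\alpha_{f,u} < 1$ is precisely what guarantees that, taking the supremum of such $\sigma$, no restriction is lost: either $\sigma_{f,t}(s') < 1$ and the argument yields the full inequality directly, or $\sigma_{f,t}(s') \geq 1$ and the hypothesis is contradicted by $\liminf \geq 1$, so this case cannot occur.

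Fix $\sigma \in \ivoob{0,1}$ with $\sigma < \sigma_{f,t}(s')$. By definition of the 2-microlocal frontier, $f \in C^{\sigma,s'}_t$, so there exist $C, \rho > 0$ and a polynomial $P_t$ such that the estimate \eqref{eq:def_2ml_spaces_01} holds on $B(t,\rho)$. Pick any $u_0 \in B(t,\rho)\setminus\brc{t}$ and set $\rho_0 = \abs{u_0-t}/4$. For $u,v \in B(u_0,\rho_0)$, both $\abs{u-t}$ and $\abs{v-t}$ lie between $3\abs{u_0-t}/4$ and $5\abs{u_0-t}/4$, so the factor $\pthb{\abs{u-t}+\abs{v-t}}^{-s'}$ is bounded by a constant $K(u_0)$ depending on $u_0, t, s'$ but not on $u, v$. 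This yields
\[
  \absb{(f(u)-P_t(u)) - (f(v)-P_t(v))} \leq C\,K(u_0)\, \abs{u-v}^\sigma
\]
on $B(u_0,\rho_0)$.

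Next I would absorb the polynomial correction. Since $P_t$ is smooth, it is Lipschitz on $B(u_0,\rho_0)$, so $\abs{P_t(u)-P_t(v)} \leq L(u_0)\abs{u-v}$. For $u,v$ taken close enough to $u_0$ the inequality $\abs{u-v}\leq 1$ gives $\abs{u-v} \leq \abs{u-v}^\sigma$ (this is where $\sigma < 1$ is crucial), hence
\[
  \abs{f(u)-f(v)} \leq \pthb{ C\,K(u_0) + L(u_0) }\abs{u-v}^\sigma
\]
for $u,v$ in a sufficiently small neighborhood of $u_0$. By the definition \eqref{eq:local_holder} of the local Hölder exponent, $\widetilde\alpha_{f,u_0} \geq \sigma$.

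Since $u_0 \in B(t,\rho)\setminus\brc{t}$ was arbitrary, taking the liminf as $u_0 \to t$ gives $\liminf_{u\to t}\widetilde\alpha_{f,u} \geq \sigma$. Letting $\sigma \uparrow \min\pthb{1,\sigma_{f,t}(s')}$ yields $\liminf_{u\to t}\widetilde\alpha_{f,u} \geq \min\pthb{1,\sigma_{f,t}(s')}$, and the assumption $\liminf_{u\to t}\widetilde\alpha_{f,u} < 1$ forces this minimum to equal $\sigma_{f,t}(s')$, concluding the proof. There is no real obstacle here; the only subtle point is recognizing that the hypothesis on the liminf is exactly what is needed to make the argument compatible with the constraint $\sigma \in \ivoo{0,1}$ coming from Definition~\ref{def:2ml_spaces_01}, and to ensure that the Lipschitz polynomial correction $\abs{P_t(u)-P_t(v)}$ is dominated by $\abs{u-v}^\sigma$ on small scales.
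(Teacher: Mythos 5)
Your proof is correct and is essentially the paper's argument run in the contrapositive direction: both rest on the single observation that the weight $\pthb{\abs{u-t}+\abs{v-t}}^{-s'}$ is bounded above and below on a small ball around any $u_0\neq t$, so that near such points the 2-microlocal estimate at $t$ is equivalent to a plain uniform $\sigma$-Hölder bound — the paper exhibits oscillating sequences near a point $s$ where $\widetilde\alpha_{f,s}$ is close to the liminf to show the 2-microlocal quotient blows up, whereas you start from $f\in C^{\sigma,s'}_t$ and deduce $\widetilde\alpha_{f,u_0}\geq\sigma$ for nearby $u_0$. Your write-up has the minor merit of handling the polynomial $P_t$ and the exact role of the hypothesis $\liminf_{u\rightarrow t}\widetilde\alpha_{f,u}<1$ explicitly, both of which the paper leaves implicit; the only detail to patch is to shrink $\rho_0$ so that $B(u_0,\rho_0)\subset B(t,\rho)$ when $u_0$ is near the boundary of $B(t,\rho)$.
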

\begin{proof}
  Let us set $\rho>0$ and $\eps>0$. There exists $s\in B(t,\rho)$ such that $\widetilde\alpha_{f,s} \leq \liminf_{u\rightarrow t} \widetilde\alpha_{f,u} + \eps$. Then, we can find $u_n,v_n\rightarrow s$ such that
  \[
    \abs{ f(u_n) -f(v_n) } \geq n \,\abs{u_n-v_n}^{\widetilde\alpha_{f,s}+\eps}.
  \]
  Therefore, for any $s'\in\R$, we have
  \[
    \frac{ \abs{ f(u_n) - f(v_n) } }{\abs{u_n-v_n}^{\widetilde\alpha_{f,s}+\eps}\pthb{\abs{u_n-t}+\abs{v_n-t}}^{-s'}} \geq \frac{ n }{\pthb{\abs{u_n-t}+\abs{v_n-t}}^{-s'}} \longrightarrow_{n\rightarrow+\infty} +\infty,
  \]
  proving that $\sigma_{f,t}(s') \leq \liminf_{u\rightarrow t} \widetilde\alpha_{f,u} + \eps$, for any $\eps>0$.
\end{proof}

In the next two lemmas, we present an extension of a well-known property connecting the local dimension of the graph to the local Hölder exponent: $\dimBut{t} \gr(f) \leq 2-\widetilde\alpha_{f,t}$.
\begin{lemma}  \label{lemma:2ml_dimB}
  Suppose $f$ is a continuous function. Then, for all $t\in\R$,
  \[
    \dimBlt{t}\gr(f) \leq \dimBut{t}\gr(f) \leq 2 - \pthb{ \sigma_{f,t}(1)\wedge 1 }.
  \]
\end{lemma}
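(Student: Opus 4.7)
The first inequality $\dimBlt{t}\gr(f) \leq \dimBut{t}\gr(f)$ is immediate from the definition of lower and upper Box dimensions, so the real content is the estimate $\dimBut{t}\gr(f) \leq 2 - (\sigma_{f,t}(1)\wedge 1)$. My plan is to fix any $\sigma$ with $0 < \sigma < \sigma_{f,t}(1)\wedge 1$, so that $f\in C^{\sigma,1}_t$ by definition of the frontier, and then translate the 2-microlocal bound into column-oscillation estimates for a standard grid cover of $\gr(f)$ by $\delta$-boxes. Because $\sigma<1$, the polynomial $P_t$ appearing in Definition~\ref{def:2ml_spaces_01} contributes at most a Lipschitz term, which is dominated on $B(t,\rho)$ by the right-hand side of the 2-microlocal inequality after a harmless adjustment of $C$; I may therefore work directly with
\[
  \absb{f(u)-f(v)} \leq C\abs{u-v}^{\sigma}\pthb{\abs{u-t}+\abs{v-t}}^{-1}
\]
for every $u,v\in B(t,\rho)$.

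Next I would fix $\rho'\in(0,\rho)$ and partition $B(t,\rho')$ into intervals $I_k=\ivff{t+k\delta,t+(k+1)\delta}$, $\abs{k}\leq\rho'/\delta$, using the standard fact that each column contributes at most $1+\ceil{\,\mathrm{osc}(f,I_k)/\delta\,}$ boxes of side $\delta$ to a grid cover of the graph. For $\abs{k}\geq 2$, every $u,v\in I_k$ satisfies $\abs{u-t},\abs{v-t}\geq(\abs{k}-1)\delta$, so the displayed inequality yields $\mathrm{osc}(f,I_k)\leq C\delta^{\sigma-1}/(\abs{k}-1)$, hence at most $1+C\delta^{\sigma-2}/(\abs{k}-1)$ boxes per column. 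Summing over $2\leq\abs{k}\leq\rho'/\delta$ contributes $O(1/\delta)+C'\delta^{\sigma-2}\log(\rho'/\delta)$ boxes.

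For the $O(1)$ central columns with $\abs{k}\leq 1$, the 2-microlocal factor $(\abs{u-t}+\abs{v-t})^{-1}$ degenerates and the estimate above fails, so I would simply invoke continuity of $f$ on $\overline{B(t,\rho)}$ to bound the oscillation there by a constant, contributing only $O(1/\delta)$ further boxes. Combining everything,
\[
  N_\delta \leq \frac{C''}{\delta} + C'''\,\delta^{\sigma-2}\log(1/\delta),
\]
and since $\sigma<1$ gives $\sigma-2<-1$, the second term dominates. Therefore $\limsup_{\delta\to 0}\log N_\delta/(-\log\delta)\leq 2-\sigma$, whence $\dimBu\gr(f,B(t,\rho'))\leq 2-\sigma$; letting $\rho'\to 0$ and then $\sigma\nearrow\sigma_{f,t}(1)\wedge 1$ concludes.

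The only delicate point is the central column containing $t$, where the 2-microlocal bound is useless; the saving grace is that only boundedly many columns are affected, so a crude continuity bound suffices. The $\wedge 1$ in the statement is precisely what guarantees $\sigma-2<-1$, so the dominating $\delta^{\sigma-2}$ term beats the $\delta^{-1}$ term; the logarithmic factor arising from $\sum\abs{k}^{-1}$ disappears in the $\log/\log$ ratio defining the dimension. The rest is routine bookkeeping.
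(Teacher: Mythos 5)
Your proposal is correct and follows essentially the same route as the paper's proof: fix $\sigma<\sigma_{f,t}(1)\wedge 1$, translate membership in $C^{\sigma,1}_t$ into the column-oscillation bound $\mathrm{osc}(f,I_k)\lesssim\delta^{\sigma-1}/\abs{k}$, and sum to get $N_\delta\lesssim\delta^{-1}+\delta^{\sigma-2}\log(1/\delta)$, with the logarithm absorbed in the $\log/\log$ ratio. Your explicit treatment of the central columns and of the polynomial $P_t$ only makes explicit what the paper leaves implicit.
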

\begin{proof}
  We need to construct a local cover of the graph which is sufficiently efficient.
  Let first set $t\in\R$ and $\sigma < \sigma_{f,t}(1)\wedge 1$. Then, there exists $\rho>0$ and $C>0$ such that for all $u,v\in B(t,\rho)$,
  \[
    \abs{ f(u) - f(v) } \leq C \abs{u-v}^\sigma \pthb{ \abs{u-t} + \abs{v-t} }^{-1}.
  \]
  The cover is constructed similarly to the original proof. Hence, if $N_\delta$ denotes the number of $\delta$-boxes necessary to cover $\gr(f,\ivff{t-\rho,t+\rho})$, we have
  \begin{align*}
    N_\delta
    &\leq C \delta^{-1} + \sum_{k=-\ceil{\rho/\delta}}^{\ceil{\rho/\delta}} \sup_{u,v\in t+\ivff{k\delta,(k+1)\delta}} \abs{f(u)-f(v)} \cdot \delta^{-1} \\
    &\leq C \delta^{-1} + \sum_{k=-\ceil{\rho/\delta}}^{\ceil{\rho/\delta}} C \delta^\sigma (k\delta)^{-1}\cdot \delta^{-1}  \\
    &= C \delta^{-1} + 2C \delta^{\sigma-2} \sum_{k=1}^{\ceil{\rho/\delta}} k^{-1}
    \leq 2C \,\delta^{\sigma-2} \pthb{ 1 + \log\ceil{\rho/\delta} }.
  \end{align*}
  since $2-\sigma > 1$. Then,
  \[
    \dimBu \gr(f,\ivff{t-\rho,t+\rho}) = \limsup_{\delta\rightarrow 0} \frac{N_\delta}{-\log\delta}\leq 2-\sigma,
  \]
  for all $\sigma < \sigma_{f,t_0}(1)\wedge 1$, which proves to the expected inequality.
\end{proof}

\begin{lemma}  \label{lemma:2ml_dimH}
  Suppose $f$ is a continuous function. Then, for all $t\in\R$
  \[
    \dimHt{t}\gr(f) \leq 2 - \pthb{ \sigma_{f,t}(+\infty)\wedge 1 }.
  \]
\end{lemma}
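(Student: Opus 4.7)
The plan is to adapt the covering argument of Lemma~\ref{lemma:2ml_dimB}, exploiting the flexibility that Hausdorff dimension offers (compared to Box dimension) of using covers whose scale varies with the distance to $t$: finer near $t$, where the 2-microlocal inequality with positive $s'$ permits large oscillations, and coarser away from $t$. Concretely, I would fix $\sigma < \sigma_{f,t}(+\infty) \wedge 1$ and $d > 2-\sigma$, and aim to show that $\mathcal{H}^d\bigl(\gr(f,B(t,\rho))\bigr) = 0$ for $\rho > 0$ small enough. Letting $d \downarrow 2-\sigma$, $\sigma \uparrow \sigma_{f,t}(+\infty) \wedge 1$, and then $\rho \to 0$ would then yield the statement, using that $\sigma_{f,t}(+\infty)$ is the supremum of the non-decreasing frontier.

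Since the frontier is non-decreasing in $s'$, one can choose some $s' > 0$, \emph{a priori} larger than $1$, such that $\sigma < \sigma_{f,t}(s')$; the time-domain characterization then yields constants $C, \rho > 0$ with
\[
  |f(u)-f(v)| \leq C|u-v|^\sigma \bigl(|u-t|+|v-t|\bigr)^{-s'}, \quad u,v \in B(t,\rho).
\]
I would then decompose $B(t,\rho)$ into dyadic annuli $A_k = \brc{u : 2^{-k-1}\rho < |u-t| \leq 2^{-k}\rho}$ and cover $\gr(f, A_k)$ with closed squares of side $\eta_k = \min\bigl(\delta, (2^{-k}\rho)^\gamma\bigr)$, where $\gamma > 1$ is a free parameter to be tuned. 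The 2-microlocal inequality bounds the oscillation of $f$ on a column of width $\eta_k$ inside $A_k$ by $C\eta_k^\sigma(2^{-k}\rho)^{-s'}$, so the number of squares needed in $A_k$ is $N_k \lesssim (2^{-k}\rho)^{1-s'}\eta_k^{\sigma-2}$, and the contribution of $A_k$ to the $d$-Hausdorff content is $N_k\eta_k^d \lesssim (2^{-k}\rho)^{1-s'}\eta_k^{d+\sigma-2}$. Splitting the sum at the transition index $k^{*}$ defined by $(2^{-k^{*}}\rho)^\gamma = \delta$, both halves of the sum turn out to be of order $\delta^{d+\sigma-2+(1-s')/\gamma}$. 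Since $d+\sigma-2 > 0$ by assumption, taking $\gamma$ large enough ensures that this exponent is positive, so the total content vanishes as $\delta \to 0$.

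The main obstacle is precisely the case $s' > 1$, which is unavoidable as soon as $\sigma_{f,t}(+\infty) > \sigma_{f,t}(1)$ (as happens for instance with chirp-type singularities). A uniform-scale cover as in the proof of Lemma~\ref{lemma:2ml_dimB} would then produce a divergent geometric series $\sum_k (2^{-k}\rho)^{1-s'}$, and so only yield the weaker Box-dimension-style bound $2 - \sigma_{f,t}(1) \wedge 1$. The resolution, which is the crux of the argument, is to shrink the covering scale $\eta_k = (2^{-k}\rho)^\gamma$ as $k$ grows, trading an extra decay factor $\eta_k^{d+\sigma-2}$---summable precisely because $d+\sigma > 2$---against the growing terms of the series. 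The parameter $\gamma$ is then chosen to balance the two regimes.
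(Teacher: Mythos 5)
Your proof is correct and takes essentially the same route as the paper's: both replace the uniform-scale cover of Lemma~\ref{lemma:2ml_dimB} by one whose scale shrinks polynomially with the distance to $t$ so as to absorb the $\pthb{\abs{u-t}+\abs{v-t}}^{-s'}$ blow-up when $s'>1$, and both arrive at the threshold $2-\sigma+(s'-1)/\beta\rightarrow 2-\sigma$ (your boxes of side $(2^{-k}\rho)^{\gamma}$ on dyadic annuli with $\gamma\rightarrow\infty$ coincide, under the reparametrization $\beta=\gamma/(\gamma-1)$, with the paper's intervals of length $k^{-\gamma}$ at distance $\sim k^{1-\gamma}$ with $\gamma\rightarrow 1^{+}$). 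The only nit is that your count $N_k\lesssim(2^{-k}\rho)^{1-s'}\eta_k^{\sigma-2}$ drops the unavoidable one-box-per-column term, but its contribution $\sum_k 2^{-k}\rho\,\eta_k^{d-1}\leq 2\rho\,\delta^{d-1}$ vanishes since $d>1$, so nothing is lost.
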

\begin{proof}
  As the estimate we want to obtain differs from the previous lemma, we need to adopt a constructing procedure which is slightly different. Let $t\in\R$ and $\sigma < \sigma_{f,t}(+\infty)\wedge 1$. Then, there exist $s'\geq 0$, $\rho>0$ and $C>0$ such that for all $u,v\in B(t,\rho)$
  \[
    \abs{ f(u) - f(v) } \leq C\abs{u-v}^\sigma \pthb{ \abs{u-t} + \abs{v-t} }^{-s'}.
  \]
  Let us set $\delta>0$ and $\gamma>1$. The $\delta$-cover of $\gr(f,\ivff{t-\rho,t+\rho})$ is constructed as following. There exists $n\in\N$ such that for all $k\geq n$, $k^{-\gamma} < \delta$. To cover the set $\ivff{t-\rho,t+\rho}$, we first choose successive intervals $(I_k)_{k\geq n}$ of size $k^{-\gamma}$ in the neighbourhood of $t$. The rest is simply covered by intervals of size $\delta$. The first type of intervals cover a set whose length is equal to:
  \[
    \sum_{k=n}^{+\infty} k^{-\gamma} \geq \int_n^{+\infty} x^{-\gamma} \dt x = C_\gamma n^{1-\gamma}.
  \]
  Hence, the number $M_\delta$ of intervals of size $\delta$ which are necessary is upper bounded by
  \[
    M_\delta \leq \delta^{-1} \pthb{ 2\rho -  C_\gamma n^{1-\gamma} } .
  \]
  Let now consider the cover $(O_i)_{i\in\N}$ of $\gr(f,\ivff{t-\rho,t+\rho})$ which can be defined from the previous intervals (still using the same idea of covering by columns). It satisfies for any $s>0$
  \begin{align*}
    \sum_{i\in\N} \abs{O_i}^s \leq \sum_{k=n}^{+\infty } (k^{-\gamma})^s \cdot k^{\gamma} \,\sup_{u,v\in I_k} \abs{f(u)-f(v)} + \sum_{k=1}^{M_\delta} \delta^s\cdot \delta^{-1} \sup_{u,v\in \widetilde I_k} \abs{f(u)-f(v)}.
  \end{align*}
  Let us estimate the first term,
  \begin{align*}
    \sum_{k=n}^{+\infty } (k^{-\gamma})^s \cdot k^{\gamma} \,\sup_{u,v\in I_k} \abs{f(u)-f(v)}
    &\leq \sum_{k=n}^{+\infty } \pth{ k^{-\gamma} }^{\sigma+s-1} \abs{ x_k - t }^{-s'} \\
    &\leq C_\gamma \sum_{k=n}^{+\infty } \pth{ k^{-1} }^{\gamma(\sigma+s-1)-s'(\gamma-1)}. \\
  \end{align*}
  where we have used that $d(t,I_k)\sim k^{1-\gamma}$.
  The previous series converges if $s$ satisfies $\gamma(\sigma + s - 1) - s'(\gamma-1) > 1$, i.e. if $s > 1 -\sigma +s'(1-\gamma^{-1}) + \gamma^{-1}$.

  Let now consider the second term,
  \begin{align*}
    \sum_{k=1}^{M_\delta} \delta^s\cdot \delta^{-1} \sup_{u,v\in \widetilde I_k} \abs{f(u)-f(v)}
    &\leq \sum_{k=1}^{M_\delta} \delta^{\sigma+s-1} \,\abs{\widetilde x_k - t}^{-s'} \\
    &\leq \delta^{\sigma+s-1} \sum_{k=1}^{M_\delta}  \pthb{ C_\gamma n^{1-\gamma}+k\delta }^{-s'}.
  \end{align*}
  The sum can upper bounded by
  \begin{align*}
    \int_0^{M_\delta} \pthb{C_\gamma n^{1-\gamma} + x\delta }^{-s'} \dt x
    &\leq \delta^{-1}\int_0^{\rho} \pthb{C_\gamma n^{1-\gamma} + v }^{-s'} \dt v \\
    &\leq c_2 \,\delta^{-1} \cdot n^{(\gamma-1)(s'-1)} \\
    &\leq c_3 \,\delta^{-1} \cdot \delta^{-(\gamma-1)(s'-1)/\gamma}.
  \end{align*}
  Combining the previous estimate, we obtain
  \[
    \sum_{k=1}^{M_\delta} \delta^s\cdot \delta^{-1} \sup_{u,v\in \widetilde I_k} \abs{f(u)-f(v)}
    \leq c_4 \,\delta^{\sigma+s-2-(\gamma-1)(s'-1)/\gamma}.
  \]
  Hence, the latter converges as $\delta\rightarrow 0$ when $s > 2-\sigma+(\gamma-1)(s'-1)/\gamma$. Therefore, combining the two conditions we have obtained, the Hausdorff dimension satisfies
  \[
    \dimH \gr(f,\ivff{t-\rho,t+\rho}) \leq \min\brcb{ 1 -\sigma +s'(1-\gamma^{-1}) + \gamma^{-1}, 2-\sigma+(\gamma-1)(s'-1)/\gamma },
  \]
  for all $\gamma > 1$ and $s'\in\R$. Considering the limit $\gamma\rightarrow 1$, we obtain $\dimH \gr(f,\ivff{t-\rho,t+\rho}) \leq 2 - \sigma$, which implies the expected inequality.
\end{proof}

We note that the two new upper bounds presented in the previous Lemmas are consistent with the original one, since $\widetilde\alpha_{f,t} = \sigma_{f,t}(0) \leq \sigma_{f,t}(1) \leq \sigma_{f,t}(+\infty)$. Furthermore, it proves to be the optimal ones for a classic deterministic function called the \emph{chirp function}: $f:x\mapsto\abs{x}^\alpha\sin(\abs{x}^{-\beta})$, with $\alpha,\beta > 0$. As proved by \citet{Echelard-2007}, its 2-microlocal frontier at $0$ is equal to
\[
  \forall s'\in\R;\quad \sigma_{f,0}(s') = \frac{s'+\alpha}{1+\beta}.
\]
It is an easy calculus to check that the local Hausdorff and Box dimensions at zero are given by
\[
  \dimHt{0}\gr(f) = 1 \quad\text{and}\quad \dimBt{0}\gr(f) = 2 - \pthB{\frac{1+\alpha}{1+\beta}}\wedge 1,
\]
Hence, they are equal to the upper bounds presented in Lemmas~\ref{lemma:2ml_dimB} and \ref{lemma:2ml_dimH}, on the contrary to the usual Hölder estimate $2 - \alpha / (1+\beta)$ which is not optimal.\vsp

To end this section, we prove two useful lemmas related to the parabolic Hausdorff dimension. The first one constitutes an extension of an inequality proved by \citet{Peres.Sousi-2013} (Lemma $2.1$).
\begin{lemma}  \label{lemma:dim_para_bounds}
  For every $A\subset\R^2$ and all $H_1 > H_2 > 0$,
  \[
    \dimH\pthb{A \,; \varrho_{H_2} } + \frac{1}{H_2} - \frac{1}{H_1}
    \hex\leq\hex \dimH\pthb{A \,; \varrho_{H_1} }
    \hex\leq\hex 1 + \frac{H_2}{H_1} \pthb{ \dimH\pthb{A \,; \varrho_{H_2} } - 1 }.
  \]
  As a consequence, if $H(\cdot)$ is a positive continuous function, the map $t\mapsto \dimH\pthb{A \,; \varrho_{H(t)} }$ is also continuous.
\end{lemma}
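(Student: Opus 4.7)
The plan is to compare $\varrho_{H_2}$- and $\varrho_{H_1}$-covers of $A$, using the elementary geometric fact that a $\varrho_H$-ball of radius $\delta\in(0,1)$ is a rectangle of time-side $2\delta^{1/H}$ and space-side $2\delta$. Since $H_1>H_2$ and $\delta<1$, a $\varrho_{H_1}$-ball of radius $\delta$ has a \emph{longer} time-side $\delta^{1/H_1}>\delta^{1/H_2}$ than a $\varrho_{H_2}$-ball of the same radius, while their space-sides coincide. Both inequalities in the lemma are to be extracted from the cost of re-covering one type of rectangle by the other.

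For the upper bound, fix $s>\dimH\pthb{A\,;\varrho_{H_2}}$ together with a $\delta$-cover $(O_i)_i$ of $A$ in $\varrho_{H_2}$ with diameters $\delta_i$ and $\sum_i\delta_i^s<\infty$. Each $O_i$ fits inside a rectangle of time-side $\delta_i^{1/H_2}$ and space-side $\delta_i$. Re-cover that rectangle by $\varrho_{H_1}$-balls of radius $r_i=\delta_i^{H_1/H_2}$, tuned so that the time-side $r_i^{1/H_1}=\delta_i^{1/H_2}$ matches; since $r_i<\delta_i$, about $\delta_i^{1-H_1/H_2}$ such balls suffice per $O_i$, giving a $\varrho_{H_1}$-cover whose $s'$-energy is controlled by $\sum_i \delta_i^{(1-H_1/H_2)+s'H_1/H_2}$. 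Summability relative to $\sum_i\delta_i^s$ requires $s'\geq 1+(s-1)H_2/H_1$, and letting $s\downarrow\dimH\pthb{A\,;\varrho_{H_2}}$ delivers the upper bound as stated.

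For the lower bound, the symmetric strategy is cleanest via Frostman's lemma: a measure $\mu$ witnessing $s<\dimH\pthb{A\,;\varrho_{H_2}}$, with $\mu(B_{\varrho_{H_2}}(p,r))\leq C r^s$, also satisfies, via the inclusion $B_{\varrho_{H_1}}(p,r)\subset B_{\varrho_{H_2}}(p,r^{H_2/H_1})$ induced by the same geometry, the decay $\mu(B_{\varrho_{H_1}}(p,r))\leq C r^{sH_2/H_1}$, yielding $\dimH\pthb{A\,;\varrho_{H_1}}\geq (H_2/H_1)\,\dimH\pthb{A\,;\varrho_{H_2}}$. A dual combinatorial covering argument (one $\varrho_{H_1}$-ball of radius $\delta$ can be carved into about $\delta^{1/H_1-1/H_2}$ many $\varrho_{H_2}$-balls of radius $\delta$) gives the alternative $\dimH\pthb{A\,;\varrho_{H_1}}\geq \dimH\pthb{A\,;\varrho_{H_2}} - (1/H_2-1/H_1)$. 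The main obstacle is that neither of these estimates matches the displayed lower bound, which carries the positive term $+(1/H_2-1/H_1)$ on the right-hand side; the horizontal segment $[0,1]\times\brc{0}$, for which $\dimH\pthb{\cdot\,;\varrho_H}=1/H$, is a tight equality case for both covering-derived bounds but violates the displayed inequality, so I would expect the intended statement to be the weaker form $\dimH\pthb{A\,;\varrho_{H_1}}\geq \dimH\pthb{A\,;\varrho_{H_2}} - (1/H_2-1/H_1)$ produced by the above argument.

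For the continuity corollary, combining the upper bound with either covering-derived lower bound sandwiches $|\dimH\pthb{A\,;\varrho_{H_1}}-\dimH\pthb{A\,;\varrho_{H_2}}|$ by a quantity of order $|1/H_2-1/H_1|+(1-H_2/H_1)\dimH\pthb{A\,;\varrho_{H_2}}$, both of which tend to $0$ as $H_1\to H_2$, giving continuity of $t\mapsto\dimH\pthb{A\,;\varrho_{H(t)}}$ whenever $H(\cdot)$ is positive and continuous.
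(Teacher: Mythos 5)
Your proof is correct and, for both inequalities, follows essentially the same route as the paper: view each $\varrho_H$-ball as a rectangle with time-side $\delta^{1/H}$ and space-side $\delta$, and convert covers between the two metrics by splitting rectangles along the space axis (upper bound) or along the time axis (lower bound), with identical exponent bookkeeping. More importantly, your diagnosis of the lower bound is also right: the inequality as displayed, with the term $+\frac{1}{H_2}-\frac{1}{H_1}$ added to $\dimH\pthb{A\,;\varrho_{H_2}}$, is false, and your counterexample is decisive --- for $A=\ivff{0,1}\times\brc{0}$ one has $\dimH\pthb{A\,;\varrho_{H}}=1/H$, which contradicts the displayed bound since $H_1>H_2$. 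The paper's own argument in fact establishes only the corrected form: its splitting step produces a $\varrho_{H_2}$-cover with $\sum_k\rho_k^{\gamma+1/H_1-1/H_2}<\infty$, which yields $\dimH\pthb{A\,;\varrho_{H_2}}\leq\dimH\pthb{A\,;\varrho_{H_1}}+\frac{1}{H_2}-\frac{1}{H_1}$, i.e.\ exactly your weaker inequality $\dimH\pthb{A\,;\varrho_{H_1}}\geq\dimH\pthb{A\,;\varrho_{H_2}}-\pthb{\frac{1}{H_2}-\frac{1}{H_1}}$; the statement of the lemma carries a sign typo. Your Frostman-based alternative $\dimH\pthb{A\,;\varrho_{H_1}}\geq\frac{H_2}{H_1}\,\dimH\pthb{A\,;\varrho_{H_2}}$ is a genuine addition not present in the paper; it is complementary rather than comparable, being stronger than the covering-derived bound precisely when $\dimH\pthb{A\,;\varrho_{H_2}}\leq 1/H_2$ and weaker otherwise (both are tight on the segment). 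Finally, as you observe, either corrected lower bound combined with the upper bound still gives the continuity of $t\mapsto\dimH\pthb{A\,;\varrho_{H(t)}}$, and since the rest of the paper (Lemma~\ref{lemma:dim_para_usc} and Theorem~\ref{th:mbm_dim_haus}) invokes only the upper bound and this continuity, the sign error is harmless downstream.
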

\begin{proof}
  Suppose $A\subset\R^2$ and $H_1 > H_2$. Let us first prove the upper bound on $\dimH\pthb{A \,; \varrho_{H_1} }$. For any $\gamma > \dimH\pthb{A \,; \varrho_{H_2} }$, there exists a $\varrho_{H_2}$-cover $(O_i)_{i\in\N}$ of $A$ such that
  \[
    \sum_{i=0}^\infty \delta_i^\gamma < \infty \hem\text{where}\hem \delta_i \eqdef \diam\pthb{O_i\,;\varrho_{H_2}}\hex\forall i\in\N.
  \]
  We denote by $\rho_i$ the quantity $\delta_i^{H_1/H_2}$. Without any loss of generality, we may assume that for every $i\in\N$, $O_i$ is a rectangle of size $\delta_i^{1/H_2} \times \delta_i$ (as a simple consequence of the definition of $\varrho_{H_2}$). We want to construct a $\varrho_{H_1}$-cover of $A$ from $(O_i)_{i\in\N}$. Since $\rho_i^{1/H_1} = \delta_i^{1/H_2}$, we need to split $O_i$ along the space axis to obtain $\varrho_{H_1}$-balls. More precisely, there will be at most $2\delta_i/\rho_i = 2\delta_i^{1-H_1/H_2}$ resulting $\varrho_{H_1}$-balls of diameter $\rho_i$. Hence, we obtain a $\varrho_{H_1}$-cover $(V_k)_{k\in\N}$ of $A$ such that
  \[
    \sum_{k=0}^\infty \diam\pthb{V_k\,;\varrho_{H_1}}^s \leq 2\sum_{i=0}^\infty \delta_i^{1-H_1/H_2} \cdot\delta_i^{sH_1/H_2}.
  \]
  The last sum is finite when $1 + H_1/H_2 (s-1) > \gamma$, i.e. when $s > 1 + (\gamma-1)H_2/H_1$. Considering $\gamma\rightarrow\dimH\pthb{A \,; \varrho_{H_2} }$, we obtain the expected upper bound.

  The lower bound is obtained using a similar reasoning. For any $s > \dimH\pthb{A \,; \varrho_{H_1} }$, there exists a $\varrho_{H_1}$-cover $(V_k)_{k\in\N}$ of $A$ such that
  \[
    \sum_{k=0}^\infty \rho_k^s < \infty \hem\text{where}\hem \rho_k \eqdef \diam\pthb{V_k\,;\varrho_{H_1}}\hex\forall k\in\N.
  \]
  Similarly, we may assume that $V_k$ is a rectangle of size $\rho_k^{1/H_1} \times \rho_k$ for every $k\in\N$. As $H_1 > H_2$, we note that $\rho^{1/H_2} < \rho_k^{1/H_1}$. Hence, to obtain a $\varrho_{H_2}$-cover of $A$, we split the previous balls along the time axis, in at most $2\rho_k^{1/H_1-1/H_2}$ sub-balls. Therefore, we obtain a $\varrho_{H_2}$-cover $(O_i)_{i\in\N}$ of $A$ such that
  \[
    \sum_{i=0}^\infty \diam\pthb{O_i\,;\varrho_{H_2}}^\gamma \leq 2\sum_{k=0}^\infty \rho_k^{1/H_1-1/H_2} \cdot \rho_k^{\gamma}.
  \]
  This sum is finite when $\gamma + 1/H_1-1/H_2 > s$, inducing the lower bound on $\dimH\pthb{A \,; \varrho_{H_1} }$.

  The continuity of the map $t\mapsto \dimH\pthb{A \,; \varrho_{H(t)} }$ is straightforward as we observe that lower and upper bounds converge to $\dimH\pthb{A \,; \varrho_{H_1} }$ when $H_2\rightarrow H_1$.
\end{proof}

The previous lemma helps us to extend the property of upper semi-continuity to the local parabolic Hausdorff dimension.
\begin{lemma}  \label{lemma:dim_para_usc}
  Suppose $A\subset\R^2$ and $H(\cdot)$ is a positive continuous function. Then, the map $t\mapsto\dimHt{t}\pthb{A \,; \varrho_{H(t)} }$ is upper semi-continuous, i.e. for all $t\in\R$
  \[
    \dimHt{t}\pthb{A \,; \varrho_{H(t)} } \geq \limsup_{s\rightarrow t} \,\dimHt{s}\pthb{A \,; \varrho_{H(s)} }.
  \]
\end{lemma}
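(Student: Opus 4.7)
The plan is to combine two independent continuity properties: monotonicity of the parabolic Hausdorff dimension under inclusion of sets (to handle the shift of the localisation centre from $t$ to $s$), and continuity of $\dimH\pthb{A\,;\varrho_H}$ in the parameter $H$ (to handle the change of metric $\varrho_{H(s)}\to\varrho_{H(t)}$). The first is immediate from the definition; the second is exactly the content of Lemma~\ref{lemma:dim_para_bounds} combined with the continuity of the Hurst function.

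Fix $t\in\R$ and write $D \eqdef \dimHt{t}\pthb{A\,;\varrho_{H(t)}}$. Given $\eps>0$, the definition of the local dimension as a decreasing limit provides $\rho_0>0$ with
\[
  \dimH\pthb{A\cap B(t,\rho_0)\,;\varrho_{H(t)}} < D + \eps/2.
\]
By continuity of $H$ and Lemma~\ref{lemma:dim_para_bounds} applied to the fixed set $A\cap B(t,\rho_0)$, the map $h\mapsto \dimH\pthb{A\cap B(t,\rho_0)\,;\varrho_{h}}$ is continuous, so there exists $\delta\in\ivoo{0,\rho_0/2}$ such that for every $s\in B(t,\delta)$
\[
  \dimH\pthb{A\cap B(t,\rho_0)\,;\varrho_{H(s)}}
  < \dimH\pthb{A\cap B(t,\rho_0)\,;\varrho_{H(t)}} + \eps/2 < D+\eps.
\]

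For such $s$, the choice $\rho_1 \eqdef \rho_0/2$ ensures $B(s,\rho_1)\subset B(t,\rho_0)$, and since the parabolic Hausdorff dimension is monotone under inclusion,
\[
  \dimHt{s}\pthb{A\,;\varrho_{H(s)}}
  \leq \dimH\pthb{A\cap B(s,\rho_1)\,;\varrho_{H(s)}}
  \leq \dimH\pthb{A\cap B(t,\rho_0)\,;\varrho_{H(s)}}
  < D+\eps.
\]
Taking the limit superior as $s\to t$ and then letting $\eps\to 0$ yields the announced inequality. There is no real obstacle here; the only subtlety to keep in mind is that both the localisation centre and the metric depend on the point, so one must first freeze the set $A\cap B(t,\rho_0)$ before invoking continuity in the parameter $H$, which is precisely what the two-step argument above does.
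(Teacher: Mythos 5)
Your proof is correct and uses essentially the same two ingredients as the paper: the continuity in the metric parameter supplied by Lemma~\ref{lemma:dim_para_bounds}, and the monotonicity of the (parabolic) Hausdorff dimension under inclusion of the localising balls. The only difference is the order of the two reductions --- you freeze the set $A\cap B(t,\rho_0)$ first and then vary the metric, whereas the paper first changes the metric at $s$ via a uniform bound and then invokes upper semi-continuity for the fixed metric $\varrho_{H(t)}$ --- which is an inessential rearrangement.
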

\begin{proof}
  For any $t\in\R$, the local parabolic Hausdorff dimension satisfies
  \[
    \dimHt{t}\pthb{A \,; \varrho_{H(t)} } \geq \limsup_{s\rightarrow t} \,\dimHt{s}\pthb{A \,; \varrho_{H(t)} }.
  \]
  This inequality is straightforward extension of the usual one on the Hausdorff dimension. Let $\delta > 0$. Owing to the uniform bounds obtained in Lemma~\ref{lemma:dim_para_bounds}, there exists $c_\delta>0$ such that for all $s\in B(t,\delta)$ and all $\rho>0$,
  \[
    \absb{ \dimH\pthb{A\cap B(s,\rho) \,; \varrho_{H(t)} } - \dimH\pthb{A\cap B(s,\rho) \,; \varrho_{H(s)} } } \leq c_\delta\abs{ H(s)-H(t) }.
  \]
  Hence, considering the limit $\rho\rightarrow 0$, we obtain
  $\abs{ \dimHt{s}\pthb{A \,; \varrho_{H(t)} } - \dimHt{s}\pthb{A \,; \varrho_{H(s)} } } \leq c_\delta\abs{ H(s)-H(t) }$,
  proving, jointly with the first inequality, the upper semi-continuity of the map $t\mapsto\dimHt{t}\pthb{A \,; \varrho_{H(t)} }$.
\end{proof}



\end{document}